\newcommand{\executeiffilenewer}[3]{%
 \ifnum\pdfstrcmp{\pdffilemoddate{#1}}%
 {\pdffilemoddate{#2}}>0%
 {\immediate\write18{#3}}\fi%
}
\newcommand{%
 \executeiffilenewer{.svg}{.pdf}%
 {inkscape -z -D --file=.svg %
 --export-pdf=.pdf --export-latex}%
 \input{.pdf_tex}%
}[1]{%
 \executeiffilenewer{#1.svg}{#1.pdf}%
 {inkscape -z -D --file=#1.svg %
 --export-pdf=#1.pdf --export-latex}%
 \input{#1.pdf_tex}%
}
\newcounter{notes}
\newtheorem{theorem}{Theorem}[section]
\newtheorem{proposition}[theorem]{Proposition}
\newtheorem{lemma}[theorem]{Lemma}
\newtheorem{corollary}[theorem]{Corollary}
\newtheorem{observation}[theorem]{Observation}
\newtheorem*{theorem*}{Theorem}
\theoremstyle{remark}
\newtheorem{remark}{Remark}[section]
\theoremstyle{plain}      
\newcommand{\Z}{\mathbb{Z}}
\newcommand{\Q}{\mathbb{Q}}
\newcommand{\N}{\mathbb{N}}
\newcommand{\C}{\mathbb{C}}
\newcommand{\tr}{\operatorname{Tr}}
\newcommand{\SL}{\mathrm{SL}}
\newcommand{\GL}{\mathrm{GL}}
\newcommand{\petitsl}{\mathrm{sl}}
\newcommand{\id}{\mathrm{Id}}
\newcommand{\Aut}{\operatorname{Aut}}
\newcommand{\Out}{\operatorname{Out}}
\newcommand{\Hom}{\operatorname{Hom}}
\title[Paralelograms on groups]{The parallelogram identity on groups and deformations of the trivial character
in $\SL_2(\C)$}
\author{Julien March\'e}
\address{Sorbonne Universit\'e, Universit\'e Paris Diderot, CNRS,
Institut de Math\'ematiques de Jussieu-Paris Rive Gauche, IMJ-PRG,
F-75005, Paris, France}
\email{julien.marche@imj-prg.fr}
\author{Maxime Wolff}
\address{Sorbonne Universit\'e, Universit\'e Paris Diderot, CNRS,
Institut de Math\'ematiques de Jussieu-Paris Rive Gauche, IMJ-PRG,
F-75005, Paris, France}
\email{maxime.wolff@imj-prg.fr}
\begin{document}

\begin{abstract}

We describe on any finitely generated group $\Gamma$ the space of 
maps $\Gamma\to\C$ which satisfy the {\em parallelogram identity},
$f(xy)+f(xy^{-1})=2f(x)+2f(y)$.

It is known (but not well-known) that these functions correspond to
Zariski-tangent vectors at the trivial character of the character variety
of $\Gamma$ in $\SL_2(\C)$. We study the obstructions for deforming the
trivial character in the direction given by $f$. Along the way, we show
that the trivial character is a smooth point of the character
variety if $\dim H_1(\Gamma,\C)<2$ and not a smooth point if
$\dim H_1(\Gamma,\C)>2$.

\vspace{0.2cm}
MSC Classification: 20F14, 20G05, 20J05, 14B05, 14L24.

\end{abstract}
\maketitle

\newsavebox{\auteurbm}
\newenvironment{Bonmot}[1]%
  {\small\slshape%
  \savebox{\auteurbm}{\upshape\sffamily#1}%
  \begin{flushright}}
  {\\[4pt]\usebox{\auteurbm}
  \end{flushright}\normalsize\upshape}
\begin{Bonmot}{Atahualpa Yupanqui - Cachilo Dormido}
  Se le ha'i volver chacarera.
\end{Bonmot}
\section{Introduction}

It is a classical undergraduate
exercise to show that any function
$f:\Z^n\to \C$ satisfying the {\em parallelogram identity} below is a quadratic form:
\begin{equation}\label{parallelogram}
f(xy)+f(xy^{-1})=2f(x)+2f(y).
\end{equation}
The identity, written multiplicatively for the purpose of generalisation,
holds for all $x,y\in \Z^n$.
Solving this equation for a general group is a nice and recreative question
which has already been studied although not completely solved as far as
we know (see \cite{FriisStetkaer} for instance).
The question sounds deeper once we relate it to the theory of character
varieties. This relation was first noticed by Chenevier in \cite{Chenevier}
in which this theory is partially developed.
Our interest in it grew out independently from
different motivations (skein theory and dynamics on character varieties).
Before explaining this relation, let us fix a finitely generated
group $\Gamma$ and give a complete description of the space
$\mathcal{P}(\Gamma)$ of all functions satisfying
Equation~\eqref{parallelogram}.


\subsection{Description of the parallelogram functions}\label{ssec:IntroDesc}

The first solutions are the {\em quadratic forms}, defined as
$f(\gamma)=b(\gamma,\gamma)$ with $b\colon\Gamma\times\Gamma\to\C$
a bimorphism, \textsl{i.e.}, a morphism in both variables.
They form a set we denote by
$\mathcal{Q}(\Gamma)$.
Interestingly, some groups admit other parallelogram functions,
contrary to the case of $\Z^n$.

The case $\Gamma=\langle a,b,c\rangle$ of a free group of rank $3$ gives the
simplest example. If $w$ is a reduced word in the variables 
$a^{\epsilon_1},b^{\epsilon_2},c^{\epsilon_3}$ 
where $\epsilon_1,\epsilon_2,\epsilon_3\in\{\pm 1\}$,
we count, with sign
$\epsilon_1\epsilon_2\epsilon_3$, all ways of extracting
$a^{\epsilon_1} b^{\epsilon_2} c^{\epsilon_3}$, up to cyclic permutation,
from the word $w$, and we substract, with sign $\delta_1\delta_2\delta_3$,
all ways of extracting $a^{\delta_1}c^{\delta_2}b^{\delta_3}$
inside $w$, up to cyclic permutation.
The resulting map, which satisfies for example $f(abc)=-f(cba)=1$, turns out
to be in $\mathcal{P}(\Gamma)$,
see Lemma~\ref{lem:Premiere}.
It is obviously not in $\mathcal{Q}(\Gamma)$,
as it does not factor through the abelianization of~$\Gamma$.

To understand this phenomenon in greater generality, let us introduce the
notion of polynomial functions on groups. We can linearise any map
$f:\Gamma\to \C$ and view it as a linear form on $\C[\Gamma]$, the
group algebra of $\Gamma$. Let us abuse notation and still denote it by $f$.
A map $f:\Gamma\to\C$ is polynomial of order $< n$ if
\[ f((\gamma_1-1)\cdots(\gamma_n-1))=0\quad
  \forall \,\gamma_1,\ldots,\gamma_n\in \Gamma, \]
see \textsl{e.g.} \cite[Chap.~5]{Passi}.
For instance a constant function has order 0, a morphism has order
$\leqslant 1$, a quadratic form
has order $\leqslant 2$, and we will see in Section~\ref{ssec:paralcubic}
that a parallelogram function $f$ has order $\leqslant 3$.
Moreover, for a parallelogram function, the map
\[ f\circ\varepsilon_3(\gamma_1,\gamma_2,\gamma_3)=
  f((\gamma_1-1)(\gamma_2-1)(\gamma_3-1)) \]
which may be thought of as a kind of differential of order 3 of $f$ 
is an alternating trimorphism on
$\Gamma\times\Gamma\times\Gamma$. 
We will see that a parallelogram function is quadratic if and only if
this third derivative vanishes hence we get an exact sequence
\begin{equation}\label{suiteexacteincomplete}
\xymatrix{0\ar[r]&\mathcal{Q}(\Gamma)\ar[r]& \mathcal{P}(\Gamma)\ar[r]^(.4)
{\varepsilon_3^*}& \Lambda^3 H_1(\Gamma,\C)^*.}
\end{equation}
This exact sequence was already noticed in \cite{Chenevier,FriisStetkaer}.
Our contribution so far is to describe completely the image of $\varepsilon_3^*$.
This requires some basic knowledge of group cohomology for which we refer
to~\cite{Brown}. The shortest way to formulate our result  is to consider
the dual of the cup-product map
$H^1(\Gamma,\C) \times H^1(\Gamma,\C)\to H^2(\Gamma,\C)$.
By universal coefficients, it may be seen as a map
$c:H_2(\Gamma,\C)\to \Lambda^2 H_1(\Gamma,\C)$,
which has a rather elementary description, as we will recall
in Section~\ref{ssec:suiteexacte}.
\begin{theorem}\label{theo:parallel}
  Given any finitely generated group $\Gamma$, the image of
  $\varepsilon_3^*$ in the sequence~\eqref{suiteexacteincomplete} is the
  space
  \[\mathcal{E}(\Gamma)=\left\lbrace
  \Phi\colon\Lambda^3H_1(\Gamma,\C)\to\C\mid\forall x\in H_1(\Gamma,\C),
  \forall y\in H_2(\Gamma,\C), \Phi(x\wedge c(y))=0
  \right\rbrace. \]
\end{theorem}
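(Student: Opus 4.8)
The plan is to pass to the associated graded of the group ring and reduce the whole statement to a single computation in degree three. Fix a presentation $\Gamma=F/N$ with $F$ free, let $I_F,I_\Gamma$ be the augmentation ideals, and recall $I_\Gamma/I_\Gamma^2\cong H_1(\Gamma,\C)$. Since a parallelogram function has order $\leqslant 3$, it is a linear form on $\C[\Gamma]/I_\Gamma^4$, and $f\circ\varepsilon_3$ depends only on $f|_{I_\Gamma^3/I_\Gamma^4}$. Writing $m_3\colon\Lambda^3H_1(\Gamma,\C)\to I_\Gamma^3/I_\Gamma^4$ for the antisymmetrised multiplication, one has $\varepsilon_3^*(f)=f|_{I_\Gamma^3/I_\Gamma^4}\circ m_3$. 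The key step is therefore the identification
\[\ker m_3=H_1(\Gamma,\C)\wedge\operatorname{Im}(c).\]
Granting this, the inclusion $\operatorname{Im}(\varepsilon_3^*)\subseteq\mathcal{E}(\Gamma)$ is immediate: any $\Phi=\varepsilon_3^*(f)$ factors through $m_3$, hence annihilates $\ker m_3=H_1(\Gamma,\C)\wedge\operatorname{Im}(c)$, which is exactly the defining condition of $\mathcal{E}(\Gamma)$.

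To prove the kernel identity I would compare with the free group. For $F$ free the graded ring is the tensor algebra on $H_1(F,\C)$, so $m_3^F$ is injective with image the alternating tensors, and by naturality $m_3^\Gamma\circ p_*$ equals the reduction of $m_3^F$, where $p\colon H_1(F,\C)\to H_1(\Gamma,\C)$ is the (surjective) projection. Thus $\ker m_3^\Gamma=p_*\bigl((m_3^F)^{-1}(\operatorname{gr}_3\bar J)\bigr)$, where $\bar J$ is the image in $\C[F]/I_F^4$ of the kernel $J$ of $\C[F]\to\C[\Gamma]$. Computing $\operatorname{gr}_3\bar J$ from the generators $a(r-1)b$, $r\in N$, its antisymmetrisation is spanned by three families: terms with some $\bar r\neq 0$, which lie in $\ker p\wedge\Lambda^2H_1(F,\C)$ and die under $p_*$; the degree-two leading terms $\ell(r)$ of commutator relators wedged with $H_1(F,\C)$, which $p_*$ sends into $H_1(\Gamma,\C)\wedge\operatorname{Im}(c)$, since by definition $c$ is computed from exactly these leading terms via Hopf's formula; and the degree-three leading terms of relators lying in the third lower-central subgroup $F_3=[[F,F],F]$, which are Lie elements of degree three and hence vanish under antisymmetrisation, because the projection $H_1^{\otimes 3}\to\Lambda^3H_1$ kills Lie elements of degree three. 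This yields $\ker m_3^\Gamma=H_1(\Gamma,\C)\wedge\operatorname{Im}(c)$.

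For the reverse inclusion $\mathcal{E}(\Gamma)\subseteq\operatorname{Im}(\varepsilon_3^*)$ I would realise a given $\Phi$ on the free group and then descend. First, on $F$ the map $\varepsilon_3^*$ is surjective: the $abc$-type functions of Lemma~\ref{lem:Premiere}, one for each triple of generators, realise the dual basis of $\Lambda^3H_1(F,\C)$, so the pullback $p^*\Phi$ equals $\varepsilon_3^*(\tilde f)$ for some $\tilde f\in\mathcal{P}(F)$. It then remains to adjust $\tilde f$ within its fibre so that it descends to $\Gamma$, i.e.\ vanishes on $J$. Because parallelogram functions are \emph{even} ($f(\gamma^{-1})=f(\gamma)$), $\tilde f$ has no linear part and automatically kills $\operatorname{gr}_1\bar J$; its quadratic part is \emph{symmetric}, so it pairs trivially with the antisymmetric relator terms in $\operatorname{gr}_2\bar J$, and the remaining symmetric obstruction on $\ker p\odot H_1(F,\C)$ can be cancelled by adding a suitable element of $\mathcal{Q}(F)$ (which does not change $\varepsilon_3^*(\tilde f)$); finally the degree-three obstruction is precisely $p^*\Phi$ evaluated on the antisymmetrisation of $\operatorname{gr}_3\bar J$, which vanishes by the previous paragraph exactly because $\Phi\in\mathcal{E}(\Gamma)$.

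The main obstacle is this last descent step, where the condition $\Phi\in\mathcal{E}(\Gamma)$ must be upgraded from a statement about symbols into genuine $N$-invariance of $\tilde f$. The subtlety is that $J$ is not graded: a single relator contributes simultaneously in several degrees, so the vanishing of each graded symbol of $\tilde f$ on $\operatorname{gr}_\bullet\bar J$ does not formally force $\tilde f|_J=0$, as the lower-degree cross terms $\tilde f\bigl(a(r-1)b\bigr)$ survive. I expect to resolve this by solving the descent equations degree by degree along the $I$-adic filtration, using the freedom in $\mathcal{Q}(F)$ to correct the symmetric degree-two values once the degree-three symbol has been fixed; the triangular shape of the filtration should guarantee that no obstruction appears beyond those already shown to vanish. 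Equivalently, and perhaps more cleanly, one defines $f$ on $\Gamma$ directly by the word-counting formula relative to a chosen generating set and shows that its dependence on the word chosen to represent a group element is measured, in top degree, exactly by $\Phi\bigl(x\wedge c(y)\bigr)$, so that well-definedness holds if and only if $\Phi\in\mathcal{E}(\Gamma)$. The secondary difficulty is the free-group surjectivity itself, for which the explicit functions of Lemma~\ref{lem:Premiere} provide the needed concrete model.
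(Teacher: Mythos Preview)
Your approach differs from the paper's on both halves, and the descent step you flag is a real gap that your sketch does not close.

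For the inclusion $\operatorname{Im}(\varepsilon_3^*)\subseteq\mathcal{E}(\Gamma)$ the paper never computes $\ker m_3$. It uses the single identity $f(a[b,c])-f(a)=2f\circ\varepsilon_3(a,b,c)$ (Formula~\eqref{eq:VersGGG}): if $r=\prod_i[b_i,c_i]\in[F,F]\cap R$ is a Hopf representative of $y\in H_2(\Gamma,\Z)$, then $r=1$ in $\Gamma$ gives $0=f(ar)-f(a)=2\Phi(\bar a\wedge c(y))$ in one line. Your graded analysis of $\operatorname{gr}_3\bar J$ is correct as sketched and reaches the same conclusion, but at higher cost. For the reverse inclusion the paper does not lift to $F$ at all. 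It builds the Heisenberg-type extension $U$ of $H_1(\Gamma,\Z)$ by $A=(\Gamma^{(2)}/\Gamma^{(3)})\otimes\C$ with cocycle the commutator pairing $\sigma$, writes down a universal $\Omega$-valued parallelogram function $F(a,x)=\alpha\wedge x$ on $U$ (any $\alpha$ with $\sigma(\alpha)=a$), shows that $[\sigma]\in H^2(\Gamma,A)$ vanishes tautologically via Hopf so that a morphism $\Theta\colon\Gamma\to U$ exists, and checks $(F\circ\Theta)\circ\varepsilon_3=\id_\Omega$. Then $f=\Phi\circ F\circ\Theta$ realises any $\Phi\in\mathcal{E}(\Gamma)$, and no descent problem ever arises. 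The paper also records the explicit word-counting formula you allude to (just after Equation~\eqref{ecrireg}), but treats the verification as ``tedious'' and supplants it by the $U$-construction.

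Your free-group-and-descend strategy can be completed, but not by the triangular-filtration heuristic alone. The quadratic correction $q\in\mathcal{Q}(F)$ contributes nothing when $n\in N\cap[F,F]$ (since $\bar n=0$), so $\tilde f$ itself must already satisfy $\tilde f(xn)=\tilde f(x)$ for such $n$; and when $\bar n\neq 0$, prescribing $b(\cdot,\bar n)=-\tfrac12\tilde f\circ\varepsilon_2(\cdot,n)$ and $b(\bar n,\bar n)=-\tilde f(n)$ requires these quantities to be well-defined functions of $\bar n\in\ker p$ and mutually consistent. Unwinding each of these conditions (via iterated use of~\eqref{eq:VersGGG} on $F$) one finds that every obstruction is a finite sum of terms $\Phi\bigl(p(\bar x)\wedge c([m])\bigr)$ with $[m]\in H_2(\Gamma,\Z)$, which vanish exactly by the hypothesis $\Phi\in\mathcal{E}(\Gamma)$. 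So the missing ingredient in your descent is again the identity~\eqref{eq:VersGGG}; the ``shape of the filtration'' does not by itself explain why the lower-degree cross terms disappear.
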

The proof is an elementary application of the Hopf formula for
$H_2(\Gamma,\Z)$ and our theorem gives a complete, and efficient
description of the space of parallelogram functions (see the comments
after the proof of Lemma~\ref{lem:id}).
As an application, we show that for a surface $\Sigma_g$ of genus $g\geqslant 2$,
the group
$\operatorname{Mod}(\Sigma_g)=\operatorname{Out}(\pi_1 \Sigma_g)$ acts by
precomposition on $\mathcal{P}(\pi_1\Sigma_g)$ in a way which recovers
the Johnson homomorphism on the Torelli group.


\subsection{Relation with the character variety}\label{introcarac}

We consider the space
\[ R(\Gamma) = \Hom(\Gamma,\SL_2(\C)) \]
of morphisms from $\Gamma$ to $\SL_2(\C)$. This is an affine algebraic set.
One can embed it --non canonically-- in $\C^{4n}$ by sending $\rho$ to the
tuple $(\rho(\gamma_1),\ldots,\rho(\gamma_n))$ where
$\gamma_1,\ldots,\gamma_n$ is a generating set of $\Gamma$.
The ring $A(\Gamma)$ of ``regular functions'' on $R(\Gamma)$ is the ring of
polynomials in the indeterminates $a_{i,j}^l$
($1\leqslant i,j\leqslant 2$, $1\leqslant l\leqslant n$)
where $a_{i,j}^l$ is the entry $(i,j)$ of $\rho(\gamma_l)$. Of course
these functions satisfy some relations: the relations among the generators
in $\Gamma$ and those telling that $\det \rho(\gamma_l)=1$.
It should be observed however that
the algebra $A(\Gamma)$ is not necessarily reduced.

The group $\SL_2(\C)$ acts algebraically on $R(\Gamma)$ by conjugation and
the character variety $X(\Gamma)$ is the algebraic quotient of $R(\Gamma)$ by
this action (see for instance \cite{Mukai}, Section 5.1).
As a topological space, $X(\Gamma)$ is the quotient of
$R(\Gamma)$ by the relation $\rho\sim\rho'$ if and only if
$f(\rho)=f(\rho')$ for all regular functions on $R(\Gamma)$ invariant
by conjugation.
Equivalently, $X(\Gamma)$ is the space of
{\em characters}, \textsl{i.e.},
maps from $\Gamma$ to $\C$ of the form $\chi_\rho(\gamma)=\tr \rho(\gamma)$
for some  $\rho\in \Hom(\Gamma,\SL_2(\C))$.
The {\em trivial} character is then simply that of the trivial representation,
mapping all  elements of $\Gamma$ to~$2$.

The subring $A(\Gamma)^{\SL_2(\C)}$ of invariant functions becomes by
definition the ring of regular functions on $X(\Gamma)$. Generators for
the ring of invariants by the group $\SL_2(\C)$ were known to
specialists of the late 19th century (see for instance~\cite{GraceYoung})
but the search for a complete description of the ring of invariants of
tuples of matrices of size $n$ started only with Artin in~\cite{Artin69}
and was completed by Procesi in~\cite{Procesi}.
The statement has been reformulated many times since then: we state here
the case of $\SL_2(\C)$ and postpone to the end of the introduction the
case of~$\GL_n(\C)$.
In this form, this statement first appears
in~\cite[Proposition 9.1]{BrumfielHilden}
and a simpler proof is due to Chenevier,
see~\cite[Proposition 2.3]{ChenevierHab}.
\begin{theorem}[see~\cite{BrumfielHilden,
  ChenevierHab}]\label{theo:procesi}
  The ring $A(\Gamma)^{\SL_2(\C)}$ is generated by the elements $t_\gamma$
  for $\gamma\in \Gamma$ where $t_\gamma(\rho)=\tr \rho(\gamma)$
  (and a finite number of them suffice).
  Moreover these functions satisfy $t_1=2$ and the famous trace identity
  \[ t_\gamma t_\delta=t_{\gamma\delta}+t_{\gamma\delta^{-1}},\quad
    \forall \gamma,\delta\in \Gamma \]
  and these relations generate the ideal of relations among them.
\end{theorem}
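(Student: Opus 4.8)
The plan is to establish the three assertions separately: that the displayed relations hold, that the functions $t_\gamma$ generate $A(\Gamma)^{\SL_2(\C)}$, and that $t_1-2$ together with $t_\gamma t_\delta - t_{\gamma\delta}-t_{\gamma\delta^{-1}}$ generate the full ideal of relations. The first is immediate from Cayley--Hamilton: any $M\in\SL_2(\C)$ satisfies $M^2-\tr(M)M+\id=0$, whence $M+M^{-1}=\tr(M)\id$; multiplying by $\rho(\gamma)$ and taking traces gives $t_{\gamma\delta}+t_{\gamma\delta^{-1}}=t_\gamma t_\delta$, while $t_1=\tr(\id)=2$. So the content lies entirely in the generation statement and in the description of the relations.

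For generation, I would invoke the First Fundamental Theorem for simultaneous conjugation of matrices: every polynomial function on $\SL_2(\C)^n$ invariant under diagonal conjugation is a polynomial in the traces $\tr(\rho(\gamma_{i_1})^{\pm1}\cdots\rho(\gamma_{i_k})^{\pm1})$ of words in the generators and their inverses. Since $\rho$ is a homomorphism, each such word equals $\rho(\gamma)$ for some $\gamma\in\Gamma$, so these traces are exactly the $t_\gamma$. For $\SL_2$ this can be proved by hand, using that the symmetric form $(A,B)\mapsto\tr(AB)$ is nondegenerate on $2\times 2$ matrices together with Cayley--Hamilton polarisations; the trace identity then reduces every $t_\gamma$ to a polynomial in the finitely many traces of words of length at most three, yielding both generation and the finiteness claim. (Finiteness also follows abstractly from Hilbert's theorem, as $\SL_2(\C)$ is reductive.)

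The harder part is that these relations generate the \emph{whole} ideal of relations, i.e.\ the Second Fundamental Theorem. Here I would first treat the universal case $\Gamma=F$ free, where $R(F)=\SL_2(\C)^n$ and the statement is Procesi's theorem, and then descend to a general finitely generated $\Gamma=F/N$. For the descent, note that $R(\Gamma)\subseteq R(F)$ is a closed $\SL_2(\C)$-stable subscheme, so $A(\Gamma)=A(F)/I_\Gamma$ with $I_\Gamma$ conjugation-invariant; since $\SL_2(\C)$ is linearly reductive in characteristic zero, the Reynolds operator gives $A(\Gamma)^{\SL_2(\C)}=A(F)^{\SL_2(\C)}/\bigl(I_\Gamma\cap A(F)^{\SL_2(\C)}\bigr)$, and one must identify this ideal in terms of the abstract trace presentation. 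The main obstacle is precisely this identification: one needs to show that no relations among the $t_\gamma$ survive beyond those forced by $t_1-2$, the trace identity, and the defining relations of $\Gamma$. I would attack it by putting an arbitrary element of the abstract trace ring into a normal form — a linear combination of products $t_{w_1}\cdots t_{w_r}$ indexed by tuples of reduced words, obtained by repeatedly applying the trace identity — and then exhibiting enough explicit representations $\rho$ to separate these normal forms, forcing the surjection from the abstract ring onto $A(\Gamma)^{\SL_2(\C)}$ to be injective. Care is required because $A(\Gamma)$ need not be reduced, so the argument must be run at the level of the coordinate ring rather than merely on closed points.
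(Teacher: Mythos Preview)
The paper does not supply its own proof of this theorem: it is quoted as a known result, with references to Brumfiel--Hilden and to Chenevier's habilitation for the proof. So there is nothing to compare against, and your task reduces to whether your sketch stands on its own.

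The verification of the trace identity and of $t_1=2$ is fine, and your account of generation (First Fundamental Theorem plus the reduction to short words via the trace identity) is standard and correct. The descent from $F$ to $\Gamma=F/N$ via the Reynolds operator, giving $A(\Gamma)^{\SL_2(\C)}=A(F)^{\SL_2(\C)}/(I_\Gamma)^{\SL_2(\C)}$, is also correct.

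The gap is in the last paragraph. You propose to prove injectivity of the map from the abstract trace ring to $A(\Gamma)^{\SL_2(\C)}$ by producing a normal form and then ``exhibiting enough explicit representations $\rho$ to separate these normal forms''. But evaluating at representations only tests equality in the \emph{reduced} quotient of $A(\Gamma)^{\SL_2(\C)}$; you yourself flag that $A(\Gamma)$ may be non-reduced, and then say the argument ``must be run at the level of the coordinate ring'' without indicating how. This is exactly the point where the argument has content: one must show that the abstract trace ring already \emph{is} $A(\Gamma)^{\SL_2(\C)}$ as a possibly non-reduced scheme, not merely that they have the same closed points. The cited proofs handle this differently: Brumfiel--Hilden build an explicit $\C[T_\gamma]$-algebra carrying a universal representation and identify it with $A(\Gamma)$ as a module over the invariants, while Chenevier's argument goes through the functorial machinery of pseudocharacters (determinants), which gives the scheme-theoretic statement directly. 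Your normal-form-plus-evaluation strategy, as written, would at best recover the reduced statement; to close the gap you would need either a universal-representation construction over the abstract trace ring, or an argument that the abstract trace ring is itself reduced (which it need not be for arbitrary $\Gamma$).

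A minor point: in your descent you speak of relations ``forced by \ldots\ the defining relations of $\Gamma$''. In the statement as formulated, the generators $t_\gamma$ are already indexed by elements of $\Gamma$, not by words in a free group, so the group relations are absorbed into the indexing and do not appear as extra relations in the presentation. Keeping this straight makes the identification of $(I_\Gamma)^{\SL_2(\C)}$ cleaner.
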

As a main consequence, any function $f\colon\Gamma\to \C$ satisfying the
relations $f(1)=2$ and
$f(\gamma)f(\delta)=f(\gamma\delta)+f(\gamma\delta^{-1})$ for all
$\gamma,\delta\in \Gamma$ has the form $f=\chi_\rho$ for some
representation $\rho\colon\Gamma\to \SL_2(\C)$.

We may think of the trivial character, defined by $t_\gamma=2$ for all
$\gamma$, as an origin for the character variety. Thus, it may be convenient
to write $t_\gamma=u_\gamma+2$. Now $A(\Gamma)^{\SL_2(\C)}$ is generated by
the functions $u_\gamma$, subject to the relations $u_1=0$ and
\begin{equation}\label{eq:u}
  u_{\gamma\delta}+u_{\gamma\delta^{-1}}=
  2u_\gamma+2u_\delta+u_\gamma u_\delta.
\end{equation}

In this note we are interested in the ``deformations'' of the trivial
character. This word ``deformations'' is rather ubiquitous and deserves to
be explicited. Actual deformations in the algebraic set $X(\Gamma)$
may be realized as analytic paths $t\mapsto f_t(\gamma)$, with
$f_t(\gamma)=tf_1(\gamma)+\frac{t^2}{2}f_2(\gamma)+\cdots$ for each
$\gamma$, which satisfy for any $\gamma,\delta\in \Gamma$ the following
equation:
\begin{equation}\label{eq:Ordren}
  f_n(\gamma\delta)+f_n(\gamma\delta^{-1})=2f_n(\gamma)+2f_n(\delta)+
  \sum_{k=1}^{n-1}\binom{n}{k}f_k(\gamma)f_{n-k}(\delta).
\end{equation}
From this perspective, understanding the deformations of the trivial
character consists in solving Equation~\eqref{eq:Ordren},
for $n$ gradually increasing. A solution for all $n$ then yields a
formal deformation of the trivial character and the Artin approximation
theorem (see~\cite{Artin}) tells that there exists a convergent series which
coincides with the formal one at any given order.

At a purely algebraic level, the trivial character corresponds to the
maximal ideal $m$ of $A(\Gamma)^{\SL_2(\C)}$ generated by all
$u_\gamma$, for $\gamma\in\Gamma$.
The {\em Zariski-tangent} space at the trivial character is then, by
definition, the dual to the vector space $m/m^2$.
In the vector space $m/m^2$, Equation~\eqref{eq:u} loses its term
$u_\gamma u_\delta$, and a Zariski-tangent vector at the trivial
character is then simply a map $\Gamma\to\C$ satisfying the parallelogram
identity.
In other words,
\begin{observation}
  Let $f_1\colon\Gamma\to\C$ be any function.
  Then, $f_1\in\mathcal{P}(\Gamma)$ if and only if the map
  $u_\gamma\mapsto tf_1(\gamma)$ defines an algebra morphism from
  $A(\Gamma)^{\SL_2(\C)}$ to $\C[t]/(t^2)$.
\end{observation}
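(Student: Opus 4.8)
The plan is to read the statement straight off Procesi's theorem (Theorem~\ref{theo:procesi}) in the reformulated coordinates $u_\gamma=t_\gamma-2$. That theorem presents $A(\Gamma)^{\SL_2(\C)}$ as the quotient of the free commutative $\C$-algebra on the symbols $u_\gamma$, $\gamma\in\Gamma$, by the ideal $I$ generated by the element $u_1$ together with all the elements
\[ u_{\gamma\delta}+u_{\gamma\delta^{-1}}-2u_\gamma-2u_\delta-u_\gamma u_\delta,\qquad \gamma,\delta\in\Gamma. \]
(The passage from the $t_\gamma$ to the $u_\gamma$ is an affine change of coordinates, hence an automorphism of the free algebra, so it carries the trace relations of Theorem~\ref{theo:procesi} to exactly these.) For any commutative $\C$-algebra $B$, a $\C$-algebra morphism $A(\Gamma)^{\SL_2(\C)}\to B$ is therefore the same datum as an arbitrary assignment of the generators $u_\gamma$ to elements of $B$ whose induced morphism from the free algebra annihilates every generator of $I$; this reduction to checking the generators of $I$ is the one point that genuinely uses Procesi's theorem.

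First I would fix $B=\C[t]/(t^2)$ and the assignment $u_\gamma\mapsto tf_1(\gamma)$, and test it against the two families of generators of $I$. The generator $u_1$ maps to $tf_1(1)$, which vanishes in $\C[t]/(t^2)$ precisely when $f_1(1)=0$; this is automatic for a parallelogram function, since setting $x=y=1$ in~\eqref{parallelogram} gives $2f_1(1)=4f_1(1)$. The key computation is the image of the second family: the generator attached to $(\gamma,\delta)$ maps to
\[ t\bigl(f_1(\gamma\delta)+f_1(\gamma\delta^{-1})\bigr)-2tf_1(\gamma)-2tf_1(\delta)-t^2f_1(\gamma)f_1(\delta). \]
Because $t^2=0$ in $\C[t]/(t^2)$, the quadratic term disappears, and this element equals $t\bigl(f_1(\gamma\delta)+f_1(\gamma\delta^{-1})-2f_1(\gamma)-2f_1(\delta)\bigr)$. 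It therefore vanishes for all $\gamma,\delta$ if and only if $f_1$ satisfies the parallelogram identity~\eqref{parallelogram}.

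Putting these together yields both implications at once: the assignment $u_\gamma\mapsto tf_1(\gamma)$ descends to $A(\Gamma)^{\SL_2(\C)}$ exactly when it kills all generators of $I$, which happens exactly when $f_1\in\mathcal{P}(\Gamma)$. I do not expect a real obstacle here; the only substantive input is Procesi's theorem, guaranteeing that the displayed relations generate the entire ideal of relations so that checking them on generators suffices to produce a well-defined morphism. Everything else is bookkeeping, the crucial mechanism being that passing to $\C[t]/(t^2)$ forces $t^2=0$ and thereby linearizes~\eqref{eq:u} into the parallelogram identity --- the algebraic shadow of working in the Zariski cotangent space $m/m^2$.
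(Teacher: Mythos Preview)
Your argument is correct and is essentially the same as the paper's: the paper treats this as an immediate observation from the presentation of $A(\Gamma)^{\SL_2(\C)}$ in the coordinates $u_\gamma$, noting that in $m/m^2$ (equivalently, modulo $t^2$) the relation~\eqref{eq:u} loses its quadratic term and becomes the parallelogram identity. You have simply written out this reasoning in full, including the verification that $f_1(1)=0$ is automatic.
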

Of course, this is nothing else than a solution of
Equation~\eqref{eq:Ordren} to the order $n=1$.
More generally, solutions of Equation~\eqref{eq:Ordren} to the order $n$
may be thought of as Zariski-jets
to the order $n$
at the trivial character.
Said more abstractly, they form the space of ring morphisms from
$A(\Gamma)^{\SL_2(\C)}$ to $\C[t]/(t^{n+1})$ that have zero constant term
on~$m$.

In this note we explore the problem of solving Equation~\eqref{eq:Ordren}
for small $n$. We find that a parallelogram function has two universal
obstructions: one at order~2 and one at order~3. Here is the precise result.
\begin{theorem}\label{theo:obstruction}
  Let $f_1\in \mathcal{P}(\Gamma)$ be a parallelogram function. 
  \begin{enumerate}
  \item If there exists an algebra morphism
    $f=tf_1+t^2f_2:A(\Gamma)^{\SL_2(\C)}\to \C[t]/(t^3)$
    then $f_1$ is a quadratic form.
  \item If there exists an algebra morphism
    $f=tf_1+t^2f_2+t^3f_3:A(\Gamma)^{\SL_2(\C)}\to \C[t]/(t^4)$
 then $f_1$ is a quadratic form of rank $\leqslant 2$.
 \end{enumerate}
\end{theorem}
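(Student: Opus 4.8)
The plan is to translate the existence of the algebra morphism into the order-by-order integrability of the trace identity, and then read off the two obstructions as vanishing conditions on $f_1$. Writing $t_\gamma=2+\sum_{k\geqslant1}t^kf_k(\gamma)$ and imposing $t_\gamma t_\delta=t_{\gamma\delta}+t_{\gamma\delta^{-1}}$ in $\C[t]/(t^{n+1})$, the coefficient of $t^m$ yields, with the operator $Lg(\gamma,\delta):=g(\gamma\delta)+g(\gamma\delta^{-1})-2g(\gamma)-2g(\delta)$,
\[ Lf_1=0,\qquad Lf_2(\gamma,\delta)=f_1(\gamma)f_1(\delta),\qquad Lf_3(\gamma,\delta)=f_1(\gamma)f_2(\delta)+f_2(\gamma)f_1(\delta). \]
The first equation is the parallelogram identity, so the two hypotheses of the theorem are exactly the solvability of the second equation (for (1)) and of the second and third together (for (2)). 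I will use that a parallelogram function is even and of order $\leqslant3$, and that by the exact sequence~\eqref{suiteexacteincomplete} it is a quadratic form precisely when its alternating trimorphism $f_1\circ\varepsilon_3$ vanishes.

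For part (1) the goal is thus to show that solvability of $Lf_2=f_1\otimes f_1$ forces $f_1\circ\varepsilon_3=0$. The right-hand side is symmetric and is a parallelogram function in each variable separately, so any solution $f_2$ must satisfy the consistency relation $Lf_2(\gamma,\delta)=Lf_2(\delta,\gamma)$. I would compute the obstruction directly through a free presentation $\Gamma=F/R$, exactly in the spirit of the proof of Theorem~\ref{theo:parallel}: on $F$ one writes down a candidate primitive for $f_2$ and measures its failure of associativity, i.e. the discrepancy between the two reductions of the triple value. The antisymmetric part of this associator is trilinear to leading order, and using $f_1(1)=0$, the evenness of $f_1$ and the parallelogram law one identifies it with $f_1\circ\varepsilon_3$. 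Hence a solution can exist only when this trimorphism vanishes, and the exact sequence~\eqref{suiteexacteincomplete} gives $f_1\in\mathcal{Q}(\Gamma)$. This is transparent conceptually: a character trivial modulo $t$ and defined modulo $t^3$ should lift, after $s=\sqrt t$, to $\rho=\id+sB+s^2C+\cdots$ in $\SL_2$ over $\C[s]$ with $B\colon\Gamma\to\mathfrak{sl}_2$ a homomorphism, whence $f_1(\gamma)=\tfrac12\tr B(\gamma)^2$ is visibly quadratic; but since at this order the lifting is essentially equivalent to the statement, I keep the recursion computation as the rigorous argument and use the representation picture only as a guide.

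For part (2) I may assume $f_1=q$ is a quadratic form and must show that solvability to order three forces $\operatorname{rank}q\leqslant2$. Here the lifting picture becomes genuinely usable: writing $q=\tfrac12\tr B(\cdot)^2$ with $B\colon\Gamma\to\mathfrak{sl}_2$, the continuation to order three amounts to the character being even in $s=\sqrt t$ through order $s^6$. The first odd contribution, of order $s^3$, is governed by the expression $\tr\!\big(B(\gamma)[B(\delta),B(\eta)]\big)$, and it can be cancelled only if the image of $B$ lies in the $(-1)$-eigenspace of $\ad(g)$ for some involution $g\in\SL_2$. Such an eigenspace is at most two-dimensional in $\mathfrak{sl}_2$, which gives $\operatorname{rank}q\leqslant2$; conversely two generic directions do fit, so the bound is sharp. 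At the elementary level I would, over $\C$, write $q=\sum_{i=1}^r\ell_i^2$ with independent homomorphisms $\ell_i\colon\Gamma\to\C$, solve the order-two equation explicitly from the rank-one block $t_\gamma=2\cosh(\sqrt t\,\ell(\gamma))$ (for which $f_2=\ell^4/12$), substitute into the order-three equation, and check that its solvability condition is a symmetric expression in three indices vanishing identically if and only if no three of the $\ell_i$ are independent.

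The main obstacle lies in part (2): the solution $f_2$ of the order-two equation is unique only up to an arbitrary element of $\mathcal{P}(\Gamma)$, and this freedom propagates into the order-three equation. The crux is to prove that the order-three obstruction cannot be annihilated by \emph{any} admissible choice of $f_2$ once $\operatorname{rank}q\geqslant3$, i.e. that the relevant antisymmetric part survives the entire affine family of choices. This is exactly where the two-dimensionality of $\ad$-anti-invariant subspaces of $\mathfrak{sl}_2$ enters, and turning that invariance statement into a statement about the recursion itself, rather than about the heuristic lift, is the delicate point I expect to require the most care.
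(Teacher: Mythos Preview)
Your translation of the hypothesis into the recursion $Lf_m=\sum_{k}f_kf_{m-k}$ is correct, and you correctly identify that part~(1) amounts to showing $f_1\circ\varepsilon_3=0$. But neither half of your proposal is a proof, and part~(2) contains a genuine circularity.

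For part~(1), you announce a computation (``write down a candidate primitive for $f_2$ and measure its failure of associativity'') but do not carry it out; the identification of that associator with $f_1\circ\varepsilon_3$ is precisely the content of the claim, and you have not supplied it. The paper's argument is quite different and entirely elementary. It first establishes, for any $f\in\mathcal{C}(\Gamma)$, a universal identity (Equation~\eqref{eq:Formule4}) expressing $2f\circ\varepsilon_4$ as $f\circ p$ applied to an explicit element of $\C[\Gamma]^{\otimes2}$. Applying this to $f_2$ with the first slot specialised to $\varepsilon_3(a_1,a_2,a_3)$, and using $f_2\circ p=f_1\otimes f_1$ together with $f_1\circ\varepsilon_4=0$, one obtains
\[ 2f_2\circ\varepsilon_6(a_1,\ldots,a_6)=f_1\circ\varepsilon_3(a_1,a_2,a_3)\,f_1\circ\varepsilon_3(a_4,a_5,a_6). \]
Since $f_2$ is a class function the left side is invariant under the $6$-cycle $(1\,2\,3\,4\,5\,6)$, while the right side changes sign under the transposition $(5\,6)$. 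As the $6$-cycle is odd, both sides vanish, whence $f_1\circ\varepsilon_3=0$.

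For part~(2), your main line of argument is circular. You assert that ``the lifting picture becomes genuinely usable'' and reason from a representation $\rho=\id+sB+\cdots$ over $\C[s]$ with $s=\sqrt t$; but an algebra morphism $A(\Gamma)^{\SL_2(\C)}\to\C[t]/(t^4)$ does not \emph{a priori} lift to a representation over any truncated ring. The lifting result in this paper (Theorem~\ref{theo:lifting}) is for $\C[[t]]$ and uses completeness via Bass--Serre theory; nothing here guarantees a lift at a fixed finite order, so constraints on $B$ cannot be read off from the character equation alone. Your fallback (``write $q=\sum\ell_i^2$ and check a symmetric condition'') is only a sketch, and you yourself flag the difficulty that $f_2$ is determined only up to $\mathcal{P}(\Gamma)$.

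The paper again stays entirely on the functional-equation side. From part~(1) one knows $f_1\in\mathcal{Q}(\Gamma)$; feeding this back into Equation~\eqref{eq:Formule4} gives an explicit formula for $f_2\circ\varepsilon_4$ in terms of the bilinear form $\langle\cdot,\cdot\rangle=\tfrac12 f_1\circ\varepsilon_2$ (Equation~\eqref{eq:carre}). Note that since any element of $\mathcal{P}(\Gamma)$ has $\varepsilon_4=0$, this formula is independent of the ambiguity in $f_2$, which dissolves the difficulty you raised. One then computes $2f_3\circ\varepsilon_6$ via two different placements of the inner $\varepsilon$'s, obtains two expressions in $\langle\cdot,\cdot\rangle$, and their equality is exactly the vanishing of a $3\times3$ Gram determinant of $\langle\cdot,\cdot\rangle$. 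Hence $\operatorname{rank}\langle\cdot,\cdot\rangle\leqslant2$.
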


We will see that there are no other ``universal obstructions'' in the sense
that if $\Gamma$ is a free group and $f_1\in \mathcal{P}(\Gamma)$ is a
quadratic form of rank $\leqslant 2$ then there is a complete deformation
$f=tf_1+t^2f_2+\cdots$. This fits into a more general result of independent
interest: any formal deformation of the trivial character (at all orders)
is the character of a formal deformation of a parabolic representation.
Let us state the precise result.
\begin{theorem}\label{theo:lifting}
  Let $f=tf_1+t^2 f_2+\cdots$
  and suppose that $u_\gamma\mapsto f(\gamma)$ defines an algebra morphism
  from $A(\Gamma)^{\SL_2(\C)}$ to $\C[[t]]$. Then there
  exists a representation $\rho\colon\Gamma\to \SL_2(\C[[t]])$ such that
  for all $\gamma\in \Gamma$,
  $$2+f(\gamma)=\tr \rho(\gamma).$$
  Notice that $\rho$ evaluated at $t=0$ has trivial character. Hence
  is a parabolic representation, that is, it takes values in the abelian
  group of unipotent upper triangular matrices.
\end{theorem}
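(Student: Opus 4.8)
The plan is to rephrase the statement as a lifting problem and solve it over a free group, where the absence of relations trivialises the obstruction theory, before descending to $\Gamma$. Giving the morphism $u_\gamma\mapsto f(\gamma)$, $A(\Gamma)^{\SL_2(\C)}\to\C[[t]]$, is the same as giving a $\C[[t]]$-point of the character scheme $\operatorname{Spec}A(\Gamma)^{\SL_2(\C)}$ that reduces modulo $t$ to the trivial character; producing $\rho$ amounts to extending it to an algebra morphism $A(\Gamma)\to\C[[t]]$, that is, to lifting this point along the quotient map $\rep(\Gamma)\to X(\Gamma)$. First I would choose a finite generating set and a free group $F$ surjecting onto $\Gamma$ via $\pi$. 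Setting $t_w=2+f(\pi(w))$ for $w\in F$ still satisfies $t_1=2$ and the trace identities of Theorem~\ref{theo:procesi}, and reduces to the trivial character modulo $t$. It therefore suffices to build a representation $\tilde\rho\colon F\to\SL_2(\C[[t]])$ whose character is $w\mapsto t_w$ and whose reduction modulo $t$ is parabolic, and then to check that $\tilde\rho$ factors through $\Gamma$.

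The factorisation is, perhaps surprisingly, automatic from the trace data. By Theorem~\ref{theo:procesi} the ring $A(F)^{\SL_2(\C)}$ is generated by finitely many traces $t_{w_1},\dots,t_{w_N}$, and every other trace is a universal polynomial in these --- the same polynomial for $\tilde\rho$ and for the prescribed function, since the identity follows from the trace relations. Hence, once $\tilde\rho$ matches the finitely many values $t_{w_i}$, its whole character equals $w\mapsto t_w$. In particular, for every $w\in F$ and every $r\in\ker\pi$ we get $\tr(\tilde\rho(w)\tilde\rho(r))=t_{wr}=t_w=\tr\tilde\rho(w)$, so $\tr(\tilde\rho(w)(\tilde\rho(r)-\id))=0$ for all $w$. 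If $\tilde\rho\otimes\C((t))$ is absolutely irreducible, its image spans $M_2(\C((t)))$ by Burnside's theorem, and nondegeneracy of the trace form forces $\tilde\rho(r)=\id$; thus $\ker\pi\subseteq\ker\tilde\rho$ and $\tilde\rho$ descends. In the remaining case the character is of reducible type over $\C((t))$, i.e. $t_\gamma=\lambda_\gamma+\lambda_\gamma^{-1}$ for a homomorphism $\lambda$ of $\Gamma$; there I would instead write down directly an upper triangular representation of $\Gamma$, with diagonal a $\C[[t]]^\times$-valued character and off-diagonal part a $1$-cocycle, which by construction lives on $\Gamma$ and has unipotent reduction.

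It remains to construct $\tilde\rho$ over the free group. Since $F$ is free, a representation is an unconstrained assignment of matrices to the generators, so there is no homomorphism obstruction (equivalently $H^2(F,\operatorname{Ad})=0$): the only requirement is to realise the finitely many prescribed traces $t_{w_i}\in\C[[t]]$. I would do this by induction on the $t$-adic order, starting from a parabolic $\rho_0\colon F\to\SL_2(\C)$ sending the generators to unipotents in a common Borel subgroup, and keeping at every order the reduction modulo $t$ inside that fixed Borel of unipotent matrices. The main obstacle is precisely the interaction of three demands: being $t$-integral (entries in $\C[[t]]$, not $\C((t))$), realising the prescribed traces exactly, and preserving the parabolic reduction. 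Because the residual representation is parabolic rather than irreducible, the quotient map $\rep(F)\to X(F)$ fails to be smooth along this locus, and the naive Fricke--Procesi solution of the trace equations introduces denominators or square roots in $t$ (already visible in $\lambda+\lambda^{-1}=2+f(\gamma)$ when $f(\gamma)$ has odd $t$-adic valuation). The crux of the proof is to show that the parabolic normalisation can be maintained throughout the induction so that these equations remain solvable integrally; this is what simultaneously yields the integral lift and the parabolic reduction asserted in the statement, and it is here that the genuine work lies.
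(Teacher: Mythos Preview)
Your reduction to a free group followed by a Burnside-type descent in the irreducible case is correct and elegant, but it does not simplify the heart of the problem: constructing $\tilde\rho\colon F\to\SL_2(\C[[t]])$ with prescribed traces is exactly Theorem~\ref{theo:lifting} for $\Gamma=F$, and you explicitly leave this step undone. Moreover the $t$-adic induction you sketch does not work as stated. At a parabolic $\rho_0(\gamma)=\begin{pmatrix}1&\mu(\gamma)\\0&1\end{pmatrix}$, a first-order deformation is a cocycle $\eta\in Z^1(F,\operatorname{Ad}\rho_0)$, and a short computation shows that the induced first-order variation of the trace is $\gamma\mapsto\mu(\gamma)c(\gamma)$, where $c\in\Hom(F,\C)$ is the lower-left entry of $\eta$. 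Thus the image of the differential of the character map at $\rho_0$ consists only of quadratic forms divisible by the fixed linear form $\mu$: an $n$-dimensional space inside the $\binom{n+1}{2}+\binom{n}{3}$-dimensional space $\mathcal P(F_n)$. The error term at each inductive step is an arbitrary parallelogram function, so you cannot kill it by a correction of the same order; one would need to go back and modify lower-order terms, and you give no mechanism for this. Your reducible case also has a gap: you propose a triangular representation with diagonal $(\lambda,\lambda^{-1})$ for $\lambda\in\C[[t]]^\times$, but if $\lambda\equiv 1\bmod t$ then $\lambda+\lambda^{-1}\equiv 2\bmod t^2$, contradicting $f_1\neq 0$, while if $f(\gamma_0)$ has odd $t$-valuation the eigenvalue $\lambda$ lives only in $\C[[t^{1/2}]]$. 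The integral representation in the reducible case is genuinely not triangularisable over $\C[[t]]$ in general (e.g.\ $\begin{pmatrix}1&1\\t&1+t\end{pmatrix}$).

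The paper takes a completely different route that sidesteps both difficulties. It first lifts the $\C[[t]]$-point of $X(\Gamma)$ to $R(\Gamma)$ over the algebraic closure $\overline K$ of $K=\C((t))$, where surjectivity of the GIT quotient is automatic; then it shows, by an explicit change of basis and a small computation (essentially that $K$ carries no nontrivial quaternion algebra), that the resulting $\overline\rho$ can be conjugated into $\SL_2(K)$; finally it invokes the Bass--Serre tree for $(\SL_2(K),v)$ to conclude that a representation with all traces in $\C[[t]]$ has a global fixed vertex and is therefore conjugate into $\SL_2(\C[[t]])$. No order-by-order induction, and no free-group detour, is needed.
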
  
Using these results we show that the trivial character is a smooth
point of $X(\Gamma)$ if $\dim H_1(\Gamma,\C)<2$ and is not smooth
if $\dim H_1(\Gamma,\C)>2$. In the remaining case, $\dim H_1(\Gamma,\C)=2$, 
we will see that $X(\Gamma)$ is smooth at the trivial character, if and only
if $R(\Gamma)$ is smooth at the trivial representation; and we will deduce
some explicit criteria for this smoothness.

Finally, we obtain the following consequence of a
celebrated theorem of Stallings (see~\cite{Stallings}).
\begin{theorem}\label{theo:etale}
  Let $\phi\colon \Gamma_1\to \Gamma_2$ be a group homomorphism, that
  induces an isomorphism between $H_1(\Gamma_1,\Z)$ and $H_1(\Gamma_2,\Z)$
  and an epimorphism from $H_2(\Gamma_1,\Z)$ to $H_2(\Gamma_2,\Z)$. Then
  $\phi^*\colon X(\Gamma_2)\to X(\Gamma_1)$ is \'etale at the trivial
  character.
\end{theorem}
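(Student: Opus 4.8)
The plan is to show that $\phi^{*}$ induces an isomorphism on the complete local rings of the two character varieties at the trivial characters; since both residue fields are $\C$, this is equivalent to $\phi^{*}$ being étale there. The complete local ring $\widehat{A(\Gamma)^{\SL_2(\C)}}_{m}$ pro-represents the functor sending an Artinian local $\C$-algebra $B$ to the set of $B$-valued characters of $\Gamma$ reducing to the trivial character modulo $m_{B}$ (for $B=\C[t]/(t^{n+1})$ these are exactly the solutions of Equation~\eqref{eq:Ordren} up to order $n$). The input from Stallings' theorem~\cite{Stallings} is that a homomorphism inducing an isomorphism on $H_{1}(-,\Z)$ and an epimorphism on $H_{2}(-,\Z)$ induces an isomorphism $\Gamma_{1}/L_{N}\Gamma_{1}\to\Gamma_{2}/L_{N}\Gamma_{2}$ on every quotient by the $N$-th term $L_{N}$ of the lower central series.

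The heart of the argument is a reduction showing that these deformation functors depend only on the tower of nilpotent quotients. Fix an Artinian local $B$ with $m_{B}^{n+1}=0$ and a $B$-valued character $f$ near the trivial one. By (the natural extension to $B$ of) Theorem~\ref{theo:lifting}, we may write $f=\tr\rho$ for a representation $\rho\colon\Gamma\to\SL_2(B)$ whose reduction modulo $m_{B}$ is parabolic; after a constant conjugation (which does not change $f$) we may assume this reduction takes values in the unipotent upper triangular group $U$. Then $\rho$ takes values in $G=\{g\in\SL_2(B)\mid g\bmod m_{B}\in U\}$, and this group is nilpotent of class bounded in terms of $n$: for $g\in G$ the element $g-1$ is nilpotent, so $\log g$ converges and lies in the Lie subalgebra $\mathfrak{n}=\{X\in\mathfrak{sl}_2(B)\mid X\bmod m_{B}\in\C E\}$ (with $E$ the standard nilpotent), and $\mathfrak{n}$ is nilpotent because $\operatorname{ad}(E)^{3}=0$ and $m_{B}^{n+1}=0$ together force its lower central series to terminate. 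Consequently $\rho$, and hence its character $f$, factors through $\Gamma/L_{N}\Gamma$ for some $N=N(n)$ independent of $f$ and $B$. Since $\SL_2(\C)$ is reductive, the surjection $\Gamma\to\Gamma/L_{N}\Gamma$ yields a surjection $A(\Gamma)^{\SL_2(\C)}\twoheadrightarrow A(\Gamma/L_{N}\Gamma)^{\SL_2(\C)}$, and the factorization just obtained shows it becomes bijective on $B$-points for every Artinian $B$ with $m_{B}^{n+1}=0$. Passing to the limit gives $\widehat{A(\Gamma)^{\SL_2(\C)}}_{m}\cong\varprojlim_{N}\widehat{A(\Gamma/L_{N}\Gamma)^{\SL_2(\C)}}_{m}$.

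With this reduction the theorem follows formally. By Stallings, $\phi$ induces compatible isomorphisms $\Gamma_{1}/L_{N}\Gamma_{1}\to\Gamma_{2}/L_{N}\Gamma_{2}$, hence compatible isomorphisms between the complete local rings of $X(\Gamma_{i}/L_{N}\Gamma_{i})$ at the trivial character. Taking the inverse limit over $N$ and using the identifications of the previous paragraph, we conclude that $\phi^{*}$ induces an isomorphism $\widehat{A(\Gamma_{2})^{\SL_2(\C)}}_{m}\to\widehat{A(\Gamma_{1})^{\SL_2(\C)}}_{m}$, which is precisely the assertion that $\phi^{*}$ is étale at the trivial character.

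The main obstacle is the reduction of the middle paragraph, that near-trivial $B$-valued characters factor through a fixed nilpotent quotient. Two points require care: first, one must extend the lifting result of Theorem~\ref{theo:lifting} from $\C[[t]]$ to an arbitrary Artinian local base $B$, and verify the claimed nilpotency of $G$ through the Lie-algebra computation above; second, one cannot work with jets alone, since bijectivity of the map on $\C[t]/(t^{n+1})$-points would not suffice to force an isomorphism of complete local rings, so the factorization must be established uniformly for every Artinian local $B$. Granting these, the combination of the nilpotent-quotient reduction with Stallings' theorem yields the étale property at once.
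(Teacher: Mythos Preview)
Your approach differs substantially from the paper's, and it contains a genuine gap at its central step.

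The paper's route is direct and elementary and never invokes Theorem~\ref{theo:lifting}. Using Formula~\eqref{eq:Formule4} (valid for any $f\in\mathcal{C}(\Gamma)$), one shows recursively that each component $f_k$ of an $n$-jet satisfies $f_k\circ\varepsilon_{4^k}=0$, hence is polynomial of bounded order on $\Gamma$. Combined with the explicit identity $a[b,c]-a\equiv(\text{element of }I^3)$ modulo commutators in $\C[\Gamma]$, this forces every $n$-jet to factor through $\Gamma/\Gamma^{(N)}$ for some $N$ depending only on $n$. Stallings' theorem then yields the bijection on jets, and the paper cites~\cite{Liu} for the passage to ``\'etale''.

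Your plan instead lifts a $B$-valued character to a $B$-valued representation with parabolic reduction, observes that the image sits in a nilpotent subgroup of $\SL_2(B)$, and concludes. The nilpotency computation is fine; the problem is the lifting. The extension of Theorem~\ref{theo:lifting} to an arbitrary Artinian local $B$ is \emph{false}, already for $B=\C[t]/(t^2)$. Indeed, if $\rho\colon\Gamma\to\SL_2(\C[t]/(t^2))$ has $\tr\rho\equiv 2\bmod t$, its reduction is (up to conjugation) $\rho_0(\gamma)=\left(\begin{smallmatrix}1&\lambda(\gamma)\\0&1\end{smallmatrix}\right)$ for some homomorphism $\lambda$, and writing $\rho=\rho_0(1+t\tau)$ one computes $\tr\rho(\gamma)=2+t\,\lambda(\gamma)c(\gamma)$, where $c$ (the lower-left entry of $\tau$) is forced by the cocycle condition to be a homomorphism as well. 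Thus the only first-order characters that lift are products of two linear forms, i.e.\ quadratic forms of rank~$\le 2$. For $\Gamma=F_3$ a rank-$3$ quadratic form lies in $\mathcal{P}(F_3)$ --- a legitimate $\C[t]/(t^2)$-point of $X(F_3)$ --- yet admits no lift to $R(F_3)(\C[t]/(t^2))$. There is no conflict with Theorem~\ref{theo:lifting} itself: by Theorem~\ref{theo:obstruction} such an $f_1$ is obstructed at order~$3$, so it never extends to $\C[[t]]$. Since your nilpotent-quotient reduction rests entirely on this lifting, the argument breaks here, and restricting to $B=\C[t]/(t^{n+1})$ does not help.

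Your instinct that one should control all Artinian $B$-points, not merely $\C[t]/(t^{n+1})$-points, is sound; the paper's argument can in fact be read as an identity in the universal ring $A(\Gamma)^{\SL_2(\C)}$ (showing $u_{\gamma r}-u_\gamma\in m^{n+1}$ for $r\in\Gamma^{(N)}$), which gives the stronger statement. But the right tool is the polynomiality coming from Formula~\eqref{eq:Formule4}, not representation lifting.
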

This latter property is an algebraic analogue of a local diffeomorphism. 
Concretely, this means that $\phi^*$ induces an isomorphism between the
spaces of Zariski-jets at the trivial character,
see \textsl{e.g.}~\cite[Proposition~3.26]{Liu}.

It seems interesting to extend the results of this article to more general
settings. We conclude this introduction by determining the functional
equation corresponding to the case of
$\GL_n(\C)$.
Let $A_n(\Gamma)$ be the algebra of regular functions on
$\Hom(\Gamma,\GL_n(\C))$, Procesi's theorem states that the functions
$t_\gamma(\rho)=\tr \rho(\gamma)$ still generate the invariant subalgebra
of $A_n(\Gamma)$ and gives a complicated list of relations.
We learned from~\cite[Chap.~2]{ChenevierHab} the following reinterpretation
in terms of pseudo-characters.
\begin{theorem}[Procesi~\cite{Procesi} reformulated by
  Chenevier~\cite{ChenevierHab}]
  Let $R$ be a $\C$-algebra and $T:\Gamma\to R$ be a central map
  (\textsl{i.e.}, invariant by conjugation) which maps $1$ to $n$.
  Then there exists a morphism of algebras
  $A_n(\Gamma)^{\GL_n(\C)}$ to $R$ mapping $t_\gamma$ to $T(\gamma)$
  if and only if the following Frobenius identity is satisfied:
  \[\forall \gamma_0,\ldots,\gamma_{n}\in \Gamma, \quad
  \sum_{\sigma\in \mathfrak{S}_{n+1}}\epsilon(\sigma)T^\sigma
  (\gamma_0,\ldots,\gamma_n)=0 \]
  where
  $T^{(i_1,\ldots,i_k)}(\gamma_0,\ldots,\gamma_n)=
  T(\gamma_{i_1}\cdots\gamma_{i_k})$
  and $T^{\sigma}=\prod_{j=1}^k T^{\sigma_j}$ if
  $\sigma=\sigma_1\cdots\sigma_k$ is the decomposition of $\sigma$ into
  cycles (including the trivial ones).
\end{theorem}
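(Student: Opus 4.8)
The plan is to prove the two implications separately. I expect the ``only if'' direction to be a universal trace identity, and the ``if'' direction to reduce to Procesi's presentation of the invariant ring, the latter being the real content.

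\textbf{Necessity.} I would first check that the tautological traces satisfy the Frobenius identity; then, given an algebra morphism $\phi$ with $\phi(t_\gamma)=T(\gamma)$, applying $\phi$ to that identity—which lives already among the $t_\gamma$ in $A_n(\Gamma)^{\GL_n(\C)}$—yields it for $T$. The identity for traces is Schur--Weyl: on $V^{\otimes(n+1)}$ with $V=\C^n$ the antisymmetriser $a=\sum_{\sigma\in\mathfrak{S}_{n+1}}\epsilon(\sigma)\,\sigma$ factors through $\Lambda^{n+1}V=0$, hence $a=0$ as an endomorphism. For matrices $X_0,\dots,X_n$ on $V$ the cycle--trace formula
\[
\tr_{V^{\otimes(n+1)}}\!\bigl(\sigma\cdot(X_0\otimes\cdots\otimes X_n)\bigr)
=\prod_{(i_1\cdots i_k)\ \text{cycle of}\ \sigma}\tr(X_{i_1}\cdots X_{i_k})
\]
turns $\tr\bigl(a\cdot(X_0\otimes\cdots\otimes X_n)\bigr)=0$ into exactly the Frobenius identity with $T=\tr$. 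As this is a polynomial identity in the matrix entries, valid over any commutative ring, it holds among the generators $t_\gamma$ of the invariant subalgebra of $A_n(\Gamma)$, and necessity follows by composing with $\phi$.

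\textbf{Sufficiency.} By Procesi's first fundamental theorem the $t_\gamma$ generate $A_n(\Gamma)^{\GL_n(\C)}$, so this ring is a quotient $\C[t_\gamma:\gamma\in\Gamma]/I$ of a polynomial algebra. The assignment $t_\gamma\mapsto T(\gamma)$ is a morphism out of the free polynomial algebra, and it descends to $A_n(\Gamma)^{\GL_n(\C)}$ precisely when $T$ annihilates $I$. Thus everything hinges on the \emph{second} fundamental theorem, in Chenevier's reformulation: the ideal $I$ of relations among traces of $n\times n$ matrices is generated by $t_1-n$ together with all substitution instances of the Frobenius identity (replacing the arguments $\gamma_i$ by arbitrary words of $\Gamma$). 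Granting this, the hypotheses $T(1)=n$ and the Frobenius identity for $T$ kill exactly the generators of $I$; the morphism descends, and it is unique because the $t_\gamma$ generate.

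\textbf{The hard part.} The obstacle is the second fundamental theorem itself—that $t_1-n$ and the Frobenius relations generate all relations—the antisymmetriser computation and the formal descent being routine. I would establish it following Chenevier's Cayley--Hamilton approach: one realises a map $T$ satisfying the identity as the reduced trace of a canonical Cayley--Hamilton quotient of $R[\Gamma]$, descending from the generic matrix algebra via reductivity of $\GL_n(\C)$ and a Reynolds-operator argument. One subtlety deserves recording and explains the normalisation hypothesis: specialising all $\gamma_i=1$ in the Frobenius identity gives $\sum_{\sigma}\epsilon(\sigma)\,T(1)^{c(\sigma)}=\prod_{j=0}^{n}\bigl(T(1)-j\bigr)$, where $c(\sigma)$ is the number of cycles of $\sigma$, so the identity alone only forces $T(1)\in\{0,1,\dots,n\}$; imposing $T(1)=n$ is what singles out the genuinely $n$-dimensional characters.
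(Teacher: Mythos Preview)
The paper does not prove this theorem: it is stated as a result of Procesi, reformulated by Chenevier, and simply cited. The only proof given in the vicinity is that of the subsequent Corollary (computing the Zariski-tangent space at the trivial character for $\GL_n$), which takes the theorem as input. So there is no ``paper's own proof'' to compare against.

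That said, your outline is a faithful sketch of the approach in the cited references. The necessity argument via the vanishing of the antisymmetriser on $V^{\otimes(n+1)}$ and the cycle--trace formula is exactly the standard one and is complete as written. For sufficiency you have correctly located the real content in the second fundamental theorem---that $t_1-n$ and the Frobenius relations generate the full ideal of relations---and you honestly flag that you are only sketching Chenevier's Cayley--Hamilton/pseudo-character proof of it rather than carrying it out. That is appropriate for a result of this depth; a full proof would be a paper in itself. Your final remark on the specialisation $\gamma_i=1$ is nice and is precisely the cycle-counting identity $\sum_{\sigma}\epsilon(\sigma)t^{c(\sigma)}=t(t-1)\cdots(t-n)$ that the paper itself invokes (attributed to Rouquier) in the proof of the Corollary. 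One small point you pass over: the centrality hypothesis on $T$ is what guarantees compatibility with the relation $t_{\gamma\delta}=t_{\delta\gamma}$ in the invariant ring; it is worth saying explicitly where it enters.
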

Such a map $T$ is called a {\em pseudo-character}. We can derive from this
theorem a higher rank analogue of parallelogram functions.
\begin{corollary}
  The Zariski-tangent space at the trivial character of
  $\Hom(\Gamma,\GL_n(\C))/\!/\GL_n(\C)$ is naturally isomorphic to the space
  of central functions $f\colon\Gamma\to\C$ satisfying the following equality:
  \begin{equation}\label{eq:Paraln}
    \forall \gamma_0,\ldots,\gamma_n\in \Gamma,
    \quad\sum_{k=1}^{n+1}\sum_{0\le i_1,\ldots,
    i_k\le n}\frac{(-1)^{k}}{k!}f(\gamma_{i_1}\cdots\gamma_{i_k})=0.
  \end{equation}
\end{corollary}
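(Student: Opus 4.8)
The plan is to mimic, for $\GL_n(\C)$, the derivation of the parallelogram identity carried out for $\SL_2(\C)$ in Section~\ref{introcarac}, with the trace identity replaced by the Frobenius identity. A Zariski-tangent vector at the trivial character is by definition an algebra morphism $A_n(\Gamma)^{\GL_n(\C)}\to\C[t]/(t^2)$ whose reduction modulo $t$ is the trivial character $t_\gamma\mapsto n$. Applying the theorem of Procesi--Chenevier stated above with $R=\C[t]/(t^2)$, such morphisms are in bijection with central maps $T\colon\Gamma\to\C[t]/(t^2)$ satisfying $T(1)=n$ and the Frobenius identity. Writing $T(\gamma)=n+tf(\gamma)$, these are exactly the conjugation-invariant functions $f\colon\Gamma\to\C$ (the centrality of $T$ forcing $f$ to be central) for which the Frobenius identity holds modulo $t^2$. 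This correspondence $\phi\mapsto f$ is manifestly linear and functorial in $\Gamma$, so it remains only to show that the vanishing of the first-order part of the Frobenius identity is equivalent to Equation~\eqref{eq:Paraln}.

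To this end I would substitute $T(\gamma)=n+tf(\gamma)$ into $\sum_{\sigma\in\mathfrak{S}_{n+1}}\epsilon(\sigma)T^\sigma(\gamma_0,\dots,\gamma_n)$ and expand in $t$, writing $c(\sigma)$ for the number of cycles of $\sigma$. The constant term is $\sum_{\sigma}\epsilon(\sigma)\,n^{c(\sigma)}$, which by the classical falling-factorial identity $\sum_{\sigma\in\mathfrak{S}_m}\epsilon(\sigma)x^{c(\sigma)}=x(x-1)\cdots(x-m+1)$ equals $n(n-1)\cdots(n-n)=0$; this merely records that the trivial character is a genuine character. The first-order part is the heart of the matter. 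By the Leibniz rule, the $t$-linear coefficient of $\prod_{\omega}\bigl(n+tf(\gamma_\omega)\bigr)$, the product running over the cycles $\omega$ of $\sigma$ and $\gamma_\omega$ denoting the product of the $\gamma_i$ along $\omega$, equals $\sum_{\tau}f(\gamma_\tau)\,n^{c(\sigma)-1}$, the sum over the cycles $\tau$ of $\sigma$. Grouping the whole expression according to the cycle $\tau$ carrying the $f$-factor, and writing $\sigma'$ for the induced permutation on the $n+1-k$ indices not met by a $k$-cycle $\tau$ (so that $\epsilon(\sigma)=\epsilon(\tau)\epsilon(\sigma')$, $c(\sigma)=1+c(\sigma')$, and $\epsilon(\tau)=(-1)^{k-1}$), the first-order term becomes
\[
\sum_{k=1}^{n+1}(-1)^{k-1}\Big(\sum_{\tau\ \text{a }k\text{-cycle}}f(\gamma_\tau)\Big)\Big(\sum_{\sigma'\in\mathfrak{S}_{n+1-k}}\epsilon(\sigma')\,n^{c(\sigma')}\Big).
\]

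Finally I would evaluate the inner permutation sum by the same falling-factorial identity, $\sum_{\sigma'\in\mathfrak{S}_{n+1-k}}\epsilon(\sigma')n^{c(\sigma')}=n(n-1)\cdots k=\tfrac{n!}{(k-1)!}$, and convert the sum over $k$-cycles into a sum over index tuples: since $f$ is central, $f(\gamma_\tau)$ depends only on the cyclic class of $\tau$, so its value is shared by the $k$ ordered tuples of \emph{distinct} indices obtained from $\tau$ by rotation, whence $\sum_{\tau}f(\gamma_\tau)=\tfrac1k\sum f(\gamma_{i_1}\cdots\gamma_{i_k})$. Collecting the constants turns $(-1)^{k-1}\tfrac{n!}{(k-1)!}\cdot\tfrac1k$ into $-\,n!\,\tfrac{(-1)^k}{k!}$, so after dividing by the nonzero factor $-n!$ the first-order term is exactly the left-hand side of Equation~\eqref{eq:Paraln}, and its vanishing for all $\gamma_0,\dots,\gamma_n$ is equivalent to $f$ being a solution. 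Specializing all $\gamma_i=1$ in~\eqref{eq:Paraln} gives $-f(1)=0$, so the condition $T(1)=n$ is automatic and need not be imposed separately. The only real obstacle is bookkeeping: organizing the first-order expansion of the Frobenius sum by the cycle type of $\sigma$, invoking the two falling-factorial evaluations, and using the centrality of $f$ to pass from cycles to tuples of distinct indices (the distinctness being automatic, as the cycles of a permutation involve distinct elements).
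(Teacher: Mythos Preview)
Your proof is correct and follows essentially the same route as the paper: substitute $T=n+\epsilon f$ into the Frobenius identity, group the first-order term by the cycle carrying the $f$-factor, evaluate the remaining permutation sum via the falling-factorial identity (Rouquier's formula), and divide by $k$ to account for the $k$ rotations of each cycle. Your write-up is in fact more complete than the paper's, since you also justify the identification with Zariski-tangent vectors, check that the constant term vanishes, observe that $f(1)=0$ is forced, and make explicit that the inner sum in~\eqref{eq:Paraln} runs over tuples of \emph{distinct} indices.
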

\begin{proof}
  Writing $T=n+\epsilon f$ in Frobenius identity with $\epsilon^2=0$ we get
  the formula
  \[ \sum_{\sigma\in \mathfrak{S}_{n+1}}\epsilon(\sigma)n^{c(\sigma)-1}
  \sum_{(i_1,\ldots,i_k)\text{ cycle of }\sigma}f(\gamma_{i_1}\ldots
  \gamma_{i_k})=0 \]
  where $c(\sigma)$ is the number of cycles of $\sigma$.
  Each time a cycle $(i_1,\ldots,i_k)$ appears in a permutation $\sigma$,
  this permutation induces a permutation $\sigma'$ of
  $\{0,\ldots,n\}\smallsetminus\{i_1,\ldots,i_k\}$ of $n+1-k$ letters,
  with $c(\sigma')=c(\sigma)-1$ and
  $\epsilon(\sigma')=(-1)^{k+1}\epsilon(\sigma)$.
  We can count the contribution of these terms by using the following
  formula due to Rouquier, see~\cite[Corollaire~3.2]{Rouquier}:
  \[ \sum_{\sigma\in \mathfrak{S}_l}\epsilon(\sigma)t^{c(\sigma)}=t(t-1)
  \cdots(t-l+1). \]
  We get that the coefficient of $f(\gamma_{i_1}\cdots\gamma_{i_k})$ is
  equal to $(-1)^{k+1}n(n-1)\cdots k$, but we have to divide by $k$ because
  the cycle $(i_1,\ldots,i_k)$ appears $k$ times in the first sum of the
  proof. Dividing by $-n!$ on both sides yields the result.
\end{proof}

\subsection{Further remarks}

{\bf 1.} We choose to state our results with coefficients in $\C$ for
simplicity. We can replace it \emph{mutatis mutandis} with any
algebraically closed field of characteristic~0. In fact, many statements are
true on $\Q$ or even $\Z[\frac{1}{2}]$, as the diligent reader may notice.

{\bf 2.} It is well-known (see \textsl{e.g.}~\cite{Weil}) that the
Zariski-tangent space of $R(\Gamma)$ at the trivial representation is the
space $Z^1(\Gamma,\operatorname{sl}_2(\C))$ which happens to be isomorphic
to $H^1(\Gamma,\operatorname{sl}_2(\C))$ and that all obstructions for
deforming the trivial representation live in
$H^2(\Gamma, \operatorname{sl}_2(\C))$.
In this perspective, experts in deformations should not be surprised by
the appearance of $H_2(\Gamma,\C)$ in Theorem~\ref{theo:parallel}.

{\bf 3.} The results of the present article apply to any central character.
Indeed, the group $H^1(\Gamma,\Z/2\Z)$ acts on the character variety by
mapping $t_\gamma$ to $\varepsilon(\gamma)t_\gamma$ where
$\varepsilon\in \Hom(\Gamma,\Z/2\Z)$. This action reduces the study of
central characters to the study of the trivial one.

{\bf Acknowledgements}
We are grateful to Louis Funar and Gw\'ena\"el Massuyeau for their interest,
their careful reading and their encouraging comments. We would also like to
thank Ga\"etan Chenevier for his interest and for pointing out to us the
beautiful theory of pseudo-characters.
Finally we thank the anonymous
referees for their useful comments and suggestions.


\section{Solving the parallelogram identity}

We may start to play with Equation~\eqref{parallelogram} and make the
following first observations.
\begin{lemma}\label{lem:Elem}
  Any function $f\in \mathcal{P}(\Gamma)$ satisfies the following
  identities for any $\gamma,\delta\in \Gamma$:
  \begin{enumerate}
  \item $f(\gamma^n)=n^2f(\gamma)$ for all $n\in \Z$,
  \item $f(\gamma\delta)=f(\delta\gamma)$,
  \item $f(\gamma\delta\gamma^{-1}\delta^{-1})=0$.
  \end{enumerate}
\end{lemma}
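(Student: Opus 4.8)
The plan is to first extract two elementary consequences of the parallelogram identity and then treat the three statements in order, feeding the earlier ones into the later. Setting $x=y=1$ in~\eqref{parallelogram} gives $2f(1)=4f(1)$, hence $f(1)=0$; putting $x=1$ then yields $f(y)+f(y^{-1})=2f(y)$, so $f$ is \emph{even}, that is $f(y^{-1})=f(y)$. These two facts, and evenness in particular, are the engine behind everything below.

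For part (1), I would fix $\gamma$ and feed $x=\gamma^n$, $y=\gamma$ into~\eqref{parallelogram}, obtaining the linear recurrence $f(\gamma^{n+1})+f(\gamma^{n-1})=2f(\gamma^n)+2f(\gamma)$ with initial data $f(\gamma^0)=f(1)=0$ and $f(\gamma^1)=f(\gamma)$. An immediate induction (or recognizing $n\mapsto n^2 f(\gamma)$ as the particular solution whose homogeneous part $A+Bn$ is killed by the initial conditions) gives $f(\gamma^n)=n^2 f(\gamma)$ for $n\geqslant 0$, and evenness extends this to all $n\in\Z$ since $(-n)^2=n^2$.

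For part (2), I would write~\eqref{parallelogram} once for the pair $(\gamma,\delta)$ and once for $(\delta,\gamma)$: the right-hand sides $2f(\gamma)+2f(\delta)$ coincide, so $f(\gamma\delta)+f(\gamma\delta^{-1})=f(\delta\gamma)+f(\delta\gamma^{-1})$. The key observation is that $\delta\gamma^{-1}=(\gamma\delta^{-1})^{-1}$, whence $f(\delta\gamma^{-1})=f(\gamma\delta^{-1})$ by evenness; cancelling this common term leaves $f(\gamma\delta)=f(\delta\gamma)$. I would record the consequence that $f$ is a \emph{class function}: taking $\delta=\gamma^{-1}g$ shows $f(g)=f(\gamma^{-1}g\gamma)$, i.e.\ $f(hgh^{-1})=f(g)$ for all $g,h$.

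Part (3) is the one requiring genuine care, and it is where I expect the main difficulty: one must realize the commutator $\gamma\delta\gamma^{-1}\delta^{-1}$ as a term $xy^{-1}$ of the identity in two different ways and reconcile the outcomes, the point being to eliminate the auxiliary quantity $f(\gamma\delta\gamma^{-1}\delta)$. First, with $x=\gamma\delta\gamma^{-1}$ and $y=\delta$, the identity reads $f(\gamma\delta\gamma^{-1}\delta)+f(\gamma\delta\gamma^{-1}\delta^{-1})=2f(\gamma\delta\gamma^{-1})+2f(\delta)=4f(\delta)$, using the class-function property from (2). Second, with $x=\gamma\delta$ and $y=\gamma^{-1}\delta$ the product $xy^{-1}$ collapses to $\gamma^2$, giving $f(\gamma\delta\gamma^{-1}\delta)=2f(\gamma\delta)+2f(\gamma^{-1}\delta)-4f(\gamma)$ via (1). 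Simplifying the right-hand side through evenness ($f(\gamma^{-1}\delta)=f(\gamma\delta^{-1})$, since $\gamma^{-1}\delta$ and $\delta\gamma^{-1}=(\gamma\delta^{-1})^{-1}$ are related by (2) and inversion) together with a final appeal to~\eqref{parallelogram} reduces it to $f(\gamma\delta\gamma^{-1}\delta)=4f(\delta)$. Comparing with the first relation forces $f(\gamma\delta\gamma^{-1}\delta^{-1})=0$. Everything apart from the choice of these two substitutions is routine bookkeeping with the three inputs $f(1)=0$, evenness, and parts (1)--(2).
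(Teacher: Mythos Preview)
Your proof is correct in every detail; the substitutions you chose for part~(3) do the job cleanly, and the bookkeeping is accurate. The paper itself leaves Lemma~\ref{lem:Elem} as an exercise and provides no proof, so there is nothing to compare your argument against.
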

\begin{lemma}\label{lem:Premiere}
  The map $f\colon F_3=\langle a,b,c\rangle\to\Z\subset\C$
  of Section~\ref{ssec:IntroDesc} is in $\mathcal{P}(F_3)$.
\end{lemma}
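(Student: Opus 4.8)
The plan is to rewrite $f$ as a signed count of three-letter subsequences and then to exploit how such counts transform under concatenation and inversion of words. Write a word as $w=s_1\cdots s_m$ with $s_i=g_i^{\eta_i}$, $g_i\in\{a,b,c\}$ and $\eta_i\in\{\pm1\}$. For a triple of positions $i<j<k$ whose underlying letters $g_i,g_j,g_k$ are exactly $a,b,c$ in some order, the recipe ``up to cyclic permutation'' amounts to weighting it by $+1$ when $(g_i,g_j,g_k)$ is an even permutation of $(a,b,c)$ (a cyclic rotation of $abc$) and by $-1$ when it is odd (a rotation of $acb$); thus
\[ f(w)=\sum_{i<j<k}\operatorname{sgn}(g_i,g_j,g_k)\,\eta_i\eta_j\eta_k, \]
where the sum runs over triples whose letters exhaust $\{a,b,c\}$ and $\operatorname{sgn}(g_i,g_j,g_k)$ denotes the sign of the permutation sending $(a,b,c)$ to $(g_i,g_j,g_k)$. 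I would first note that this linear count is invariant under cyclic rotation of $w$, since moving $s_1$ to the end merely cyclically rotates each triple that used position $1$, preserving its parity.

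The crucial step, which I expect to be the main obstacle, is to verify that this count is invariant under free reduction. This is what lets $f$ descend to $F_3$ and, more usefully, lets me compute $f(xy)$ and $f(xy^{-1})$ from the \emph{literal} concatenations of reduced words for $x$ and $y$, disregarding the cancellation at the junction. Invariance under inserting or deleting a pair $gg^{-1}$ is exactly where the three letters of the pattern being \emph{distinct} is essential: no admissible triple can occupy both inserted positions, as they carry the same letter $g$; and a triple using exactly one of them pairs off with the triple using the other, because the two inserted positions are adjacent, so no external position lies between them. Hence both completions are order-compatible with the same pair of outside letters, while the exponent $\eta=\pm1$ of the matched $g$ flips between the two, so they cancel.

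With this in hand I would record two formal inputs. First, for a literal concatenation $w=uv$, splitting each three-letter subsequence according to how many of its positions lie in $u$ gives
\[ f(uv)=f(u)+f(v)+B(u,v), \]
where the parts with all three positions in $u$ or all three in $v$ reproduce $f(u)$ and $f(v)$, and the mixed $2$--$1$ and $1$--$2$ parts assemble into a bilinear term $B(u,v)$ built from the signed exponent sums $E_w(p)$ of each letter and the antisymmetrized pair counts $\omega_w(p,q)$ (signed subsequences $p\cdots q$ minus $q\cdots p$). Second, passing from $w$ to $w^{-1}$ reverses the word and inverts every letter: reversal turns a triple into its mirror, an odd permutation, so it sends $f\mapsto-f$, while inverting the three letters multiplies the sign by $(-1)^3$; hence $f(w^{-1})=f(w)$. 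The same bookkeeping gives $E_{w^{-1}}=-E_w$ and $\omega_{w^{-1}}=-\omega_w$, so that $B(x,y^{-1})=-B(x,y)$, both entries of $B$ being odd under inversion of their argument.

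Finally I would assemble everything. Using reduction invariance and the concatenation formula twice,
\[ f(xy)+f(xy^{-1})=2f(x)+\bigl(f(y)+f(y^{-1})\bigr)+\bigl(B(x,y)+B(x,y^{-1})\bigr)=2f(x)+2f(y), \]
since $f(y^{-1})=f(y)$ and $B(x,y)+B(x,y^{-1})=0$. The only genuinely delicate point is the reduction-invariance claim; the concatenation and inversion identities are routine deconcatenation bookkeeping once $f$ is written in the alternating form above.
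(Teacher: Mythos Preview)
Your proposal is correct and follows essentially the same route as the paper's sketch: establish reduction invariance, prove $f(w^{-1})=f(w)$ via the reversal/sign bookkeeping, and then show the mixed terms in $f(xy)+f(xy^{-1})-2f(x)-2f(y)$ cancel. Your version is simply more explicit, packaging the mixed contributions into a bilinear $B$ built from $E$ and $\omega$ and checking $B(x,y^{-1})=-B(x,y)$, whereas the paper argues the two kinds of mixed terms cancel directly (one by sign flip of the single letter in $w_2$, one by reversal of the cyclic order of the two letters in $w_2$).
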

We leave the proof of Lemma~\ref{lem:Elem} as an exercise, and now sketch
a proof of Lemma~\ref{lem:Premiere}.
\begin{proof}
  First note that a word $w$ in $a,a^{-1},b,b^{-1},c,c^{-1}$ does not need
  to be reduced for $f(w)$ to make sense, and inserting a letter and its inverse
  in $w$ does not change $f(w)$. Note also that for all $w$, we have
  $f(w^{-1})=f(w)$. Indeed, every pick of
  $a^{\varepsilon_1}b^{\varepsilon_2}c^{\varepsilon_3}$ in $w$, for instance,
  corresponds to a pick of $c^{-\varepsilon_3}b^{-\varepsilon_2}a^{-\varepsilon_1}$
  in $w^{-1}$: this reverses the sign but also reverses the cyclic order in
  which the letters appear, hence the contributions to $f(w)$ and $f(w^{-1})$
  are equal. Finally if $w_1,w_2$ are two words in $a,a^{-1},b,b^{-1},c,c^{-1}$
  then $f(w_1w_2)+f(w_1w_2^{-1})-2f(w_1)-2f(w_2)$ counts exactly the
  terms of type $a^{\varepsilon_1}b^{\varepsilon_2}c^{\varepsilon_3}$
  (or permutations) that use letters in both $w_1$ and $w_2$ (or $w_2^{-1}$),
  as the others cancel in the difference. Terms for which two letters are
  in $w_1$ cancel in the sum $f(w_1w_2)+f(w_1w_2^{-1})$ because of the change
  of sign of the power in the third letter, while those for which two letters
  are in $w_2$ cancel because the cyclic order in which the letters appear
  changes between the two terms, while the product of signs of the powers
  does not.
\end{proof}
Other identities as in Lemma~\ref{lem:Elem} can be obtained directly,
but in the next section we develop a more systematic approach which
will give us information not only on parallelogram functions but also on the
functions $f_n$ involved in their deformations.

\subsection{The map $p$ and powers of the augmentation ideal}

Let us start by analyzing Equation~\eqref{eq:Ordren}. Suppose it is solved
up to the order
$n-1$: solving it to the order
$n$ is the problem of finding
a map $f_n\colon\Gamma\to\C$, such that the map
\[ (\gamma,\delta)\mapsto f_n(\gamma\delta)+f_n(\gamma\delta^{-1})
-2f_n(\gamma)-2f_n(\delta) \]
is a prescribed function, in terms of a solution
$(f_1,\ldots,f_{n-1})$ of
Equation~\eqref{eq:Ordren} to the order $n-1$.
This suggests to study the operator
which sends a map $f\colon\Gamma\to\C$ to the map $\Gamma^2\to\C$ defined by
$(\gamma,\delta)\mapsto f(\gamma\delta)+f(\gamma\delta^{-1})-2f(\gamma)
-2f(\delta)$.
By linearizing all maps on $\Gamma$, we consider this operator as the
adjoint of
$p:\C[\Gamma]\otimes\C[\Gamma]\to\C[\Gamma]$ defined for all
$\gamma,\delta\in \Gamma$ by
\[ p(\gamma\otimes\delta)=\gamma\delta+\gamma\delta^{-1}-2\gamma-2\delta. \]
Depending on the context, this map can also be viewed as a map
$p:\Gamma\times \Gamma\to\C[\Gamma]$ and we will call it the parallelogram
map. More generally, we will often replace the symbol
$\otimes$ by a (less cumbersome) coma when we evaluate on basis elements,
maps defined on tensor products.
With this notation, Equation~\eqref{eq:Ordren} becomes:
\begin{equation}\label{eq:Oordren}
  f_n\circ p(\gamma\otimes\delta) =
  \sum_{k=1}^{n-1}f_k(\gamma)f_{n-k}(\delta).
\end{equation}

Let $\varepsilon_n\colon\C[\Gamma]^{\otimes n}\to\C[\Gamma]$ be the
linear map defined by
\[ \varepsilon_n(\gamma_1\otimes\cdots\otimes\gamma_n)
=(\gamma_1-1)\cdots(\gamma_n-1). \]
Recall for instance from \cite{Passi} that the augmentation ideal $I$ is the
kernel of the map $\C[\Gamma]\to\C$ sending every $\gamma\in \Gamma$ to $1$.
One sees that the range of the map $\varepsilon_n$ is the ideal $I^n$.
Dually, the elements of $(\C[\Gamma])^*$ vanishing on $I^{n+1}$ are the
polynomial maps of order $\leqslant n$.
These maps $\varepsilon$ combine well together, in the sense that for all
suitable $k$, $j$ and $n$ we have

\[
\varepsilon_n(\gamma_1,\cdots,\gamma_j,
\varepsilon_k(\gamma_{j+1},\cdots,\gamma_{j+k}),\cdots,\gamma_{n+k-1})=
\varepsilon_{n+k-1}(\gamma_1,\cdots,\gamma_{n+k-1}).
\]

Note also that if $f\colon\Gamma\to\C$ vanishes at $1$, then
$f\circ\varepsilon_2(a\otimes b)=f(ab)-f(a)-f(b)$ measures how far is
$f$ from being a morphism. More generally, we will repeatedly use the
following observation:
\begin{equation}\label{testmulti} 
f\circ\varepsilon_{n+1}=0\implies
(\gamma_1,\ldots,\gamma_n)\mapsto f((\gamma_1-1)\cdots(\gamma_n-1))
\text{ is a morphism in each variable.}
\end{equation}


\subsection{Parallelogram functions are cubic}\label{ssec:paralcubic}

We will denote by $\mathcal{C}(\Gamma)$ the set of maps $f:\Gamma\to \C$
satisfying $f(1)=0$, $f(\gamma)=f(\gamma^{-1})$ and
$f(\gamma\delta)=f(\delta\gamma)$ for all $\gamma,\delta\in \Gamma$.
As the generators $t_{\gamma}\in A(\Gamma)^{\SL_2(\C)}$ satisfy the same
relations, all functions $f_n$ involved in Equation~\eqref{eq:Ordren} are
elements of $\mathcal{C}(\Gamma)$; of course this also follows
from Equation~\eqref{eq:Ordren} by induction on~$n$
(the inductive step
follows by using Equation~\eqref{eq:Oordren} with the elements
$1\otimes 1$, $1\otimes\gamma$ and then
$\gamma\otimes\delta-\delta\otimes\gamma$).

The objective of this paragraph is to prove the following statement.
\begin{lemma}\label{lem:ParallCubiques}
  For every map $f\in\mathcal{C}(\Gamma)$ and every $a,b,c,d\in\Gamma$,
  we have
  \begin{equation}\label{eq:Formule4}
  2f\circ\varepsilon_4(a,b,c,d)= f\circ p\left(\begin{array}{c}
  \varepsilon_3(a,b,c)\otimes d+\varepsilon_3(b,c,d)\otimes a +
  \varepsilon_3(a,b,d)\otimes c + \varepsilon_3(c,a,d^{-1})\otimes b \\
  - \varepsilon_2(a,d^{-1})\otimes\varepsilon_2(b,c)-\varepsilon_2(b,d)
  \otimes\varepsilon_2(c,a)-\varepsilon_2(d,c^{-1})\otimes\varepsilon_2(a,b)
  \end{array}\right).
  \end{equation}
  Also, for all $f\in\mathcal{P}(\Gamma)$ and every $a,b,c\in\Gamma$, we
  have $f\circ\varepsilon_3(a,b,c)+f\circ \varepsilon_3(a,c,b)=0$ and
  \begin{equation}\label{eq:VersGGG}
  f(a[b,c])-f(a)= 2f\circ \varepsilon_3(a,b,c).
  \end{equation}
\end{lemma}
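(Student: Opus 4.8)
Since the three assertions are of different natures I would handle them separately, the identity \eqref{eq:Formule4} being the computational heart of the matter. Its point is that the right-hand side has the shape $f\circ p(\cdots)$, hence vanishes as soon as $f\in\mathcal{P}(\Gamma)$; so \eqref{eq:Formule4} is precisely what forces $f\circ\varepsilon_4=0$ and makes a parallelogram function cubic. As both sides are linear in $f$, proving \eqref{eq:Formule4} amounts to a single identity in the group ring: writing $N\subseteq\C[\Gamma]$ for the subspace spanned by $1$ and by the elements $\gamma-\gamma^{-1}$ and $\gamma\delta-\delta\gamma$, the space $\mathcal{C}(\Gamma)$ is exactly the set of linear forms vanishing on $N$, so it suffices to check that $2\,\varepsilon_4(a,b,c,d)$ minus the argument of $p$ in \eqref{eq:Formule4} lies in $N$. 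To organise this I would first record the elementary relation, valid for $f\in\mathcal{C}(\Gamma)$ and $\gamma,\delta\in\Gamma$,
\[
f\circ p(\gamma\otimes\delta)=f\circ\varepsilon_2(\gamma,\delta)+f\circ\varepsilon_2(\gamma,\delta^{-1}),
\]
which is immediate from $f(1)=0$ and $f(\delta)=f(\delta^{-1})$. Extending it linearly in the first variable and using the composition rule $\varepsilon_2(\varepsilon_3(a,b,c),d)=\varepsilon_4(a,b,c,d)$ for the maps $\varepsilon_n$, the four first-row terms of \eqref{eq:Formule4} turn into eight $\varepsilon_4$-contributions, four with a plain last entry and four with an inverted one. \textbf{The main obstacle} is then the purely combinatorial verification that the three second-row terms $\varepsilon_2\otimes\varepsilon_2$, whose two arguments are both non-trivial so that the relation above no longer applies and one must expand $p$ on products by hand, combine with these eight $\varepsilon_4$'s, modulo $N$, to leave exactly $2\,\varepsilon_4(a,b,c,d)$.

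Before the last two assertions I would record a symmetry valid for every $f\in\mathcal{C}(\Gamma)$: the bilinear form $B(x,y)=f\circ\varepsilon_2(x\otimes y)$ on $\C[\Gamma]$ is symmetric, since $B(\gamma,\delta)-B(\delta,\gamma)=f(\gamma\delta)-f(\delta\gamma)=0$. As $\varepsilon_2(\varepsilon_2(a,b),c)=\varepsilon_3(a,b,c)$ and $\varepsilon_2(c,\varepsilon_2(a,b))=\varepsilon_3(c,a,b)$, the symmetry of $B$ yields the cyclic invariance $f\circ\varepsilon_3(a,b,c)=f\circ\varepsilon_3(c,a,b)$. Now take $f\in\mathcal{P}(\Gamma)$, which is cubic by the first part, so $f\circ\varepsilon_3$ is a morphism in each variable by \eqref{testmulti}. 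It is then enough to prove the diagonal vanishing $f\circ\varepsilon_3(a,a,c)=0$: polarising in the first two slots gives $f\circ\varepsilon_3(a,b,c)+f\circ\varepsilon_3(b,a,c)=0$, and cyclic invariance rewrites $f\circ\varepsilon_3(b,a,c)$ as $f\circ\varepsilon_3(a,c,b)$, which is precisely the asserted antisymmetry. The diagonal vanishing itself is short: expanding $(a-1)^2(c-1)$ and using $f(a^2)=4f(a)$ from Lemma~\ref{lem:Elem} rewrites $f\circ\varepsilon_3(a,a,c)$ as $f(a^2c)-2f(ac)-2f(a)+f(c)$, and the parallelogram identity applied to $f(a\cdot ac)+f\bigl(a\cdot(ac)^{-1}\bigr)$, together with $f(ac^{-1}a^{-1})=f(c)$ (cyclicity and $f(c^{-1})=f(c)$), shows this is zero.

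Finally, for \eqref{eq:VersGGG} and $f\in\mathcal{P}(\Gamma)$ I would write $f(a[b,c])-f(a)=f\bigl(a([b,c]-1)\bigr)$ and then, from $a([b,c]-1)=(a-1)([b,c]-1)+([b,c]-1)$ together with $f([b,c])=0$ (Lemma~\ref{lem:Elem}), reduce this to $f\bigl((a-1)([b,c]-1)\bigr)$. Invoking the classical congruence $[b,c]-1\equiv(b-1)(c-1)-(c-1)(b-1)\pmod{I^3}$ and the cubicity of $f$ (which kills the resulting error term in $(a-1)\,I^3\subseteq I^4$), this becomes $f\circ\varepsilon_3(a,b,c)-f\circ\varepsilon_3(a,c,b)$, and the antisymmetry just established turns it into $2\,f\circ\varepsilon_3(a,b,c)$, as desired.
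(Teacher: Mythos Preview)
Your outline is sound, but the route differs from the paper's precisely at the point you flag as ``the main obstacle.'' The paper does not verify \eqref{eq:Formule4} by a head-on expansion modulo $N$; instead it first proves a three-variable identity valid for every $f\in\mathcal{C}(\Gamma)$,
\[
f\circ\varepsilon_3(a,b,c)+f\circ\varepsilon_3(a,c,b)=f\circ p\bigl(\varepsilon_2(a,b)\otimes c+\varepsilon_2(a,c)\otimes b-\varepsilon_2(b,c^{-1})\otimes a\bigr),
\]
which already gives the antisymmetry for $f\in\mathcal{P}(\Gamma)$ immediately, with no appeal to cubicity, multilinearity, or polarisation. Formula~\eqref{eq:Formule4} is then \emph{built} by applying this identity three times---to $(a,bc,d)$, $(ca,d,b)$ and $(ab,d,c)$---and taking the alternating sum; the permutation symmetries of the left-hand side (fully symmetric in $(a,b,c)$ for $f\in\mathcal{C}(\Gamma)$) supply the auxiliary relations needed to reduce the result to the stated shape. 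So the paper's logical order is the reverse of yours: antisymmetry comes first and for free, and \eqref{eq:Formule4} is derived structurally from a simpler lemma rather than checked term by term. Your alternative arguments for antisymmetry (cyclic invariance from conjugation-invariance, the diagonal vanishing $f\circ\varepsilon_3(a,a,c)=0$, then polarisation once cubicity is in hand) and for \eqref{eq:VersGGG} (via the congruence $[b,c]-1\equiv (b-1)(c-1)-(c-1)(b-1)\pmod{I^3}$) are both correct and pleasant; for \eqref{eq:VersGGG} the paper instead writes $a[b,c]=abc\cdot b^{-1}\cdot c^{-1}$ and $a=abc\cdot c^{-1}\cdot b^{-1}$, so that $a[b,c]-a\equiv\varepsilon_3(abc,b^{-1},c^{-1})-\varepsilon_3(abc,c^{-1},b^{-1})$ modulo commutators. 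One small slip in your write-up: ``$2\varepsilon_4(a,b,c,d)$ minus the argument of $p$'' does not type-check (the argument lives in $\C[\Gamma]\otimes\C[\Gamma]$); you mean ``minus $p$ applied to that argument.''
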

It follows that for all $f\in\mathcal{P}(\Gamma)$, the map
$f\circ\varepsilon_3$ is an alternate trimorphism on
$\Gamma\times\Gamma\times\Gamma$. Also, the right hand side of
Equation~\eqref{eq:Formule4} will be useful for studying higher order jets,
and Equation~\eqref{eq:VersGGG} will be used in the next paragraph.
\begin{proof}
  Observe that for all $a,b,c\in\Gamma$ we have
  \[ abc+acb=2ab+2ac+2bc-2a-2b-2c+
  p(ab\otimes c+ac\otimes b-a\otimes bc^{-1}-2b\otimes c). \]
  This yields for every map $f$:
  \begin{equation}\label{eq:3alter}
  f\circ \varepsilon_3(a,b,c)+f\circ \varepsilon_3(a,c,b)
  =f\circ p(\varepsilon_2(a,b)\otimes c+\varepsilon_2(a,c)\otimes b
  -\varepsilon_2(b,c^{-1})\otimes a).
  \end{equation}
  In particular if $f\in \mathcal{P}(\Gamma)$ then
  $f\circ\varepsilon_3(a,b,c)+f\circ \varepsilon_3(a,c,b)=0$.
  Now for every $f\in\mathcal{C}(\Gamma)$, the left hand side of
  Equation~\eqref{eq:3alter} is invariant under permutations of $(a,b,c)$.
  It follows that its right hand side has the same symmetries: for example,
  permuting $b$ and $c$ gives that for all
  $f\in\mathcal{C}(\Gamma)$, the map $f\circ p$ vanishes on
  $\varepsilon_2(b,c^{-1})\otimes a - \varepsilon_2(c,b^{-1})\otimes a$.
  We may obtain similarly other
  elements of $\C[\Gamma]\otimes\C[\Gamma]$ on which $f\circ p$ vanishes
  for any $f\in\mathcal{C}(\Gamma)$, including
  \begin{equation}\label{eq:NoyauP}
  \begin{array}{c}
  \varepsilon_2(a,b)\otimes c + \varepsilon_2(a, b^{-1})\otimes c
  - \varepsilon_2(c, a)\otimes b - \varepsilon_2(c^{-1}, a)\otimes b,
  \text{ or}\\
  \varepsilon_2(b, c)\otimes a-\varepsilon_2(c, b)\otimes a
  +\varepsilon_2(b, a)\otimes c -\varepsilon_2(a, b)\otimes c
  -\varepsilon_2(a, c^{-1})\otimes b + \varepsilon_2(c^{-1}, a)\otimes b.
  \end{array}
  \end{equation}
  Now, we apply Equation~(\ref{eq:3alter}) successively to $(a,bc,d)$,
  $(ca,d,b)$ and $(ab,d,c)$ to get:
  \begin{align*}
  f\circ \varepsilon_3(a\otimes bc\otimes d+a\otimes d\otimes bc) & =
  f\circ p(\varepsilon_2(bc,a)\otimes d+\varepsilon_2(bc,d)\otimes a
  -\varepsilon_2(a,d^{-1})\otimes bc), \\
  f\circ\varepsilon_3(ca\otimes d\otimes b+ca\otimes b\otimes d) & =
  f\circ p(\varepsilon_2(b,ca)\otimes d+\varepsilon_2(b,d)\otimes ca
  -\varepsilon_2(ca,d^{-1})\otimes b), \\
  f\circ\varepsilon_3(ab\otimes d\otimes c+ab\otimes c\otimes d) & =
  f\circ p(\varepsilon_2(ab, d)\otimes c+\varepsilon_2(ab,c)\otimes d
  -\varepsilon_2(d,c^{-1})\otimes ab).
  \end{align*}
  The alternating sum of these three identities leads to the equation we
  are after. The left part yields
  $2f\circ\varepsilon_4(a,b,c,d)
  +2f\circ\varepsilon_3(a,c,d)+2f\circ\varepsilon_3(a,d,c)$.
  Substracting the term
  $2f\circ\varepsilon_3(a,c,d)+2f\circ\varepsilon_3(a,d,c)$
  to the right-hand side, expanding it by linearity, and using the
  relations from~(\ref{eq:NoyauP}) we finally obtain our main formula,
  Equation~\eqref{eq:Formule4}.

  Let us turn to the proof of Equation~\eqref{eq:VersGGG}.
  For all $a,b,c\in\Gamma$, we have
  $a[b,c]=abc\cdot b^{-1}\cdot c^{-1}$, while $a=abc\cdot c^{-1}\cdot b^{-1}$.
  Hence we have
  $a[b,c]-a\equiv\varepsilon_3(abc,b^{-1},c^{-1})-
  \varepsilon_3(abc,c^{-1},b^{-1})$,
  where we denote by $\equiv$, in $\C[\Gamma]$, the equality modulo the
  subspace generated by all $\gamma\delta-\delta\gamma$ for
  $\gamma,\delta\in\Gamma$.
  Hence if $f\in \mathcal{P}(\Gamma)$ we conclude by using the multilinearity
  and antisymmetry of $f\circ \varepsilon_3$.
\end{proof}


\subsection{The parallelogram exact sequence}\label{ssec:suiteexacte}
In this section, we prove Theorem \ref{theo:parallel}.
By Lemma~\ref{lem:ParallCubiques},
if $f\in \mathcal{P}(\Gamma)$ then
$f\circ\varepsilon_3$ is an alternate trimorphism on
$\Gamma\times \Gamma\times \Gamma$. If $f\circ \varepsilon_3=0$ then setting
$b(\gamma,\delta)=\frac{1}{2}(f(\gamma\delta)-f(\gamma)-f(\delta))$
we observe that $b$ is a symmetric bimorphism, that is,
$f(\gamma)=b(\gamma,\gamma)$ is quadratic.
Summing up, we obtain the exact sequence~\eqref{suiteexacteincomplete} and
our remaining task is to determine the range of the map
$\varepsilon_3^*\colon
\mathcal{P}(\Gamma)\to\Lambda^3H_1(\Gamma,\C)^*$.

We will start by recalling the basics of homology that we need.
We may write $\Gamma$ as a quotient $F/R$, where $F$ is finitely generated
free group and $R$ is a normal subgroup.
For any group $\Gamma$, set $\Gamma^{(1)}=\Gamma$
and let $\Gamma^{(k+1)}=[\Gamma^{(k)},\Gamma]$ be the subgroup of 
$\Gamma$ generated by commutators
$[a,b]$ with $a\in \Gamma^{(k)}$ and $b\in \Gamma$.
The Hopf formula asserts then that
$H_2(\Gamma,\Z)=[F,F]\cap R/[F,R]$. If we further denote by
$R'$ (resp. $R''$) the subgroup generated by $R$ and $[F,F]$
(resp. $R$ and $[F,[F,F]]$),
we observe the inclusions $[F,R]\subset[F,R']\subset R''$, yielding the
following maps, where the first one mixes an inclusion and a quotient:
\[ [F,F]\cap R/[F,R]\to [F,F]/[F,R'] \to [F,F]/R''\cap[F,F]\to 1. \]
It is an easy exercise to check that the sequence above is exact, and
the Hopf formula applied first to $\Gamma=F/R$ and then to its
abelianization $F/R'$, enable to rewrite the sequence above as:
\begin{equation}\label{eq:Stallings}
  H_2(\Gamma,\Z)\to H_2(\Gamma/\Gamma^{(2)},\Z)\to \Gamma^{(2)}/\Gamma^{(3)}
  \to 1.
\end{equation}
This exact sequence is due to Hopf, and we learned it from~\cite{Stallings}.
Now, for the abelian group $\Gamma/\Gamma^{(2)}=H_1(\Gamma,\Z)$, we
have the classical identification,
$H_2(H_1(\Gamma,\Z),\Z)\simeq\Lambda^2 H_1(\Gamma,\Z)$, which
identifies commutators $[a,b]$ from the Hopf formula (in the relations
defining $H_1(\Gamma,\Z)$) with the corresponding wedges $a\wedge b$.
We denote by $c\colon H_2(\Gamma,\Z)\to\Lambda^2H_1(\Gamma,\Z)$ the
composition map, and still denote
$c\colon H_2(\Gamma,\C)\to\Lambda^2H_1(\Gamma,\C)$ after tensoring with $\C$.

Recall that we denoted by $\mathcal{E}(\Gamma)$ the space of linear maps
$\Phi:\Lambda^3 H_1(\Gamma,\Z)\to \C$ such that $\Phi(x\wedge c(y))=0$
for all $x\in H_1(\Gamma,\C)$ and $y\in H_2(\Gamma,\C)$.

\begin{lemma}\label{H2necessaire}
  If $f\in \mathcal{P}(\Gamma)$ then
  $f\circ\varepsilon_3\in \mathcal{E}(\Gamma)$.
\end{lemma}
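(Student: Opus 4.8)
The plan is to evaluate $f\circ\varepsilon_3$ on a wedge $x\wedge c(y)$ and reduce the whole statement to the group-level identity~\eqref{eq:VersGGG}. By Lemma~\ref{lem:ParallCubiques}, $f\circ\varepsilon_3$ is an alternate trimorphism, so it descends to a linear form $\Phi$ on $\Lambda^3 H_1(\Gamma,\C)$; in particular it is a homomorphism in each variable, and it vanishes as soon as one of its arguments lies in $[\Gamma,\Gamma]$ (a product of commutators in any slot contributes $0$ by additivity together with $f\circ\varepsilon_3(\gamma^{-1},\cdot,\cdot)=-f\circ\varepsilon_3(\gamma,\cdot,\cdot)$). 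Since $\Phi$, $c$, and the pairing are all $\C$-linear, it suffices to verify $\Phi(x\wedge c(y))=0$ for $x$ the image of some $a\in\Gamma$ and $y$ the image of an integral class in $H_2(\Gamma,\Z)$.

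First I would unwind the definition of $c(y)$ coming from~\eqref{eq:Stallings} and the Hopf formula. Writing $\Gamma=F/R$, the class $y$ is represented by some $w=\prod_{i=1}^k[b_i,c_i]\in[F,F]\cap R$, and its image under the identification $[F,F]/[F,R']\simeq\Lambda^2H_1(\Gamma,\C)$ is exactly $c(y)=\sum_{i=1}^k\bar b_i\wedge\bar c_i$. The crucial feature of this representative is that $w\in R$, which means $w=1$ in $\Gamma$, i.e. $\prod_{i=1}^k[b_i,c_i]=1$ there. Reading $a,b_i,c_i$ now as their images in $\Gamma$, we obtain
\[ \Phi(x\wedge c(y))=\sum_{i=1}^k f\circ\varepsilon_3(a,b_i,c_i). \]

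To evaluate this sum I would telescope Equation~\eqref{eq:VersGGG}. Set $w_j=\prod_{i=1}^j[b_i,c_i]$, so that $w_0=1$ and $w_k=w=1$ in $\Gamma$. Applying~\eqref{eq:VersGGG} to the triple $(aw_{j-1},b_j,c_j)$ gives
\[ f(aw_j)-f(aw_{j-1})=2\,f\circ\varepsilon_3(aw_{j-1},b_j,c_j). \]
Because $w_{j-1}\in[\Gamma,\Gamma]$ and $f\circ\varepsilon_3$ is a homomorphism in its first slot that vanishes on commutators, the right-hand side equals $2\,f\circ\varepsilon_3(a,b_j,c_j)$. Summing over $j=1,\dots,k$ telescopes the left-hand side to $f(aw)-f(a)$, which is $0$ since $w=1$ in $\Gamma$. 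Hence $\sum_{i=1}^k f\circ\varepsilon_3(a,b_i,c_i)=0$, that is $\Phi(x\wedge c(y))=0$, which is precisely the condition defining $\mathcal{E}(\Gamma)$.

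The main obstacle is the bookkeeping in the homological step: one must match the abstractly-defined map $c$ from~\eqref{eq:Stallings} with the concrete commutator representative $w$, checking that the isomorphism $[F,F]/[F,R']\simeq\Lambda^2H_1(\Gamma,\C)$ really sends $[b_i,c_i]$ to $\bar b_i\wedge\bar c_i$ so that $c(y)=\sum_i\bar b_i\wedge\bar c_i$. Once such a $w$ is in hand, all the analytic content is carried by~\eqref{eq:VersGGG} and the telescoping, and the only place where the hypothesis $y\in H_2(\Gamma,\Z)$ (as opposed to an arbitrary wedge of homology classes) enters is the vanishing $f(aw)-f(a)=0$, forced by the relation $w=1$ in $\Gamma$.
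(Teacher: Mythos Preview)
Your proof is correct and follows essentially the same route as the paper's own argument: pick a lift $a\in\Gamma$ of $x$, represent $y$ via $w=\prod_i[b_i,c_i]\in[F,F]\cap R$, and use Equation~\eqref{eq:VersGGG} repeatedly together with $w=1$ in $\Gamma$ to obtain $\sum_i f\circ\varepsilon_3(a,b_i,c_i)=0$. The only difference is that you spell out the telescoping step and the reduction $f\circ\varepsilon_3(aw_{j-1},b_j,c_j)=f\circ\varepsilon_3(a,b_j,c_j)$ explicitly, whereas the paper compresses this into the phrase ``by repeated use of Formula~\eqref{eq:VersGGG}''.
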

\begin{proof}
  Let $x\in H_1(\Gamma,\Z)$ and $y\in H_2(\Gamma,\Z)$; it suffices to prove
  that $f\circ\varepsilon_3(x\wedge c(y))=0$. Let $a\in\Gamma$ be an element
  whose abelianization is $x$, and let $r\in[F,F]\cap R$ represent
  $y$; let us write $r=\prod_i[b_i,c_i]$. As $r$ maps to $1$ in $\Gamma$,
  we have $f(ar)-f(a)=0$, and by repeated use of Formula~\eqref{eq:VersGGG}
  this yields $2\sum_i f\circ\varepsilon_3(a,b_i,c_i)=0$, ie,
  $f\circ\varepsilon_3(x\wedge c(y))=0$.
\end{proof}

This proves the condition of Theorem~\ref{theo:parallel} and it remains to
prove that any $\Phi\in \mathcal{E}$ may be written $f\circ\varepsilon_3$
for some $f\in\mathcal{P}(\Gamma)$. We have two proofs for it, an explicit
and a more conceptual one. We present first the explicit formula, leaving
the tedious details to the reader and then move to the conceptual proof.

Let $\Phi\in\mathcal{E}(\Gamma)$. The abelianization of $\Gamma$ has the form
$H_1(\Gamma,\Z)=\Z^r\oplus \bigoplus_{i=1}^d \Z/p_i\Z.$ We choose generators
$a_1,\ldots,a_r$, $t_1,\ldots,t_d$ of $H_1(\Gamma,\Z)$ corresponding to the
above decomposition, lift them to $\Gamma$, and, abusively, still denote them
by the same letter. Every element $\gamma\in\Gamma$ can be written in the form
\begin{equation}\label{ecrireg}
  \gamma=a_1^{n_1}\cdots a_r^{n_r}t_1^{\alpha_1}\cdots
  t_d^{\alpha_d}\prod_{i=1}^q[h_i,k_i],
\end{equation}
with, for $i\in\{1,\ldots,d\}$, $\alpha_i\in\{0,\ldots,p_i-1\}$. We then put
\begin{equation}
  f(\gamma)= \sum_{i<j<k} n_in_jn_k\Phi(a_i\wedge a_j\wedge a_k)
  +2\sum_{i=1}^q\Phi(u\wedge h_i\wedge k_i),
\end{equation}
where $u=a_1^{n_1}\cdots a_r^{n_r}$, and $h_i,k_i$ still (abusively) denote
their images in $H_1(\Gamma,\C)$. As announced, we encourage the reader to
check that this formula is well-defined, satisfies the parallelogram
identity and the equation $f\circ\varepsilon_3=\Phi$.

Let us avoid these tedious verifications and move to a more conceptual proof.
First, observe that all elements of $\mathcal{P}(\Gamma)$ factor through
$\Gamma/\Gamma^{(3)}$, as a consequence of Equation~\eqref{eq:VersGGG}.
Thus, the parallelogram equation we have to solve is an equation on the
group $\Gamma/\Gamma^{(3)}$, which sits in the central extension
$0\to\Gamma^{(2)}/\Gamma^{(3)}\to\Gamma/\Gamma^{(3)}
\to\Gamma/\Gamma^{(2)}\to 0$.
Let us set $A=\Gamma^{(2)}/\Gamma^{(3)}\otimes\C$ and $B=H_1(\Gamma,\Z)$.
In order to solve the parallelogram equation, it is convenient to introduce
a different central extension $0\to A\to U\to B\to 0$ related to $\Gamma$,
defined as follows. The commutator map $\Gamma^2\to \Gamma$ induces an
antisymmetric bilinear map
$\colon H_1(\Gamma,\Z)^2\to \Gamma^{(2)}/\Gamma^{(3)}$
which, after tensoring with $\C$, gives an antisymmetric map
$\colon H_1(\Gamma,\C)^2\to A$; we denote (without distinction) these maps
by $\sigma$. With this notation, the Hopf exact
sequence~\eqref{eq:Stallings} becomes the exact sequence
\[ \xymatrix{ H_2(\Gamma,\C)\ar[r]^(0.45){c} &
  \Lambda^2H_1(\Gamma,\C) \ar[r]^(0.7){\sigma} & A\ar[r] & 0}. \]
The map $\sigma$ is a cocycle, that is, the set $U=A\times B$ endowed with
the product $(a,x)(b,y)=(a+b+\sigma(x,y),x+y)$ is a group which fits into
a central extension as above. The advantage of $U$ is that it comes with a
``canonical'' parallelogram function as follows.
\begin{lemma}
  Let $\Omega=\Lambda^3 H_1(\Gamma,\C)/\operatorname{Span}\lbrace
  c(u)\wedge v, u\in H_2(\Gamma,\C),v\in H_1(\Gamma,\C)\rbrace$
  and let $F\colon U\to \Omega$ be defined by $F(a,x)=\alpha\wedge x$,
  where $\alpha$ is any element of $\Lambda^2H_1(\Gamma,\C)$ such that
  $\sigma(\alpha)=a$.
  Then $F$ is well-defined, and satisfies the parallelogram identity.
\end{lemma}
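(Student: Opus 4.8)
The plan is to treat the two assertions separately: well-definedness will follow at once from the exactness of the Hopf sequence recalled just above, and the parallelogram identity will follow from a direct expansion in $\Lambda^3 H_1(\Gamma,\C)$, the quotient defining $\Omega$ being engineered precisely to make both work.

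First I would prove that $F$ is well defined. Suppose $\alpha,\alpha'\in\Lambda^2 H_1(\Gamma,\C)$ both satisfy $\sigma(\alpha)=\sigma(\alpha')=a$. Then $\alpha-\alpha'\in\ker\sigma$, and by the exact sequence $H_2(\Gamma,\C)\xrightarrow{c}\Lambda^2 H_1(\Gamma,\C)\xrightarrow{\sigma}A\to 0$ we have $\ker\sigma=\operatorname{im}c$. Hence $\alpha-\alpha'=c(u)$ for some $u\in H_2(\Gamma,\C)$, and for every $x$,
\[ \alpha\wedge x-\alpha'\wedge x=c(u)\wedge x\in\operatorname{Span}\lbrace c(u)\wedge v\rbrace. \]
This element vanishes in $\Omega$, so $F(a,x)=\alpha\wedge x$ does not depend on the chosen representative $\alpha$.

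Next I would verify the parallelogram identity. Fix $g=(a,x)$ and $h=(b,y)$ in $U$, and choose once and for all representatives $\alpha_a,\alpha_b$ with $\sigma(\alpha_a)=a$ and $\sigma(\alpha_b)=b$. Since $\sigma$ is antisymmetric we have $\sigma(x,x)=0$, so the group law $(a,x)(b,y)=(a+b+\sigma(x,y),x+y)$ gives $h^{-1}=(-b,-y)$, whence $g\cdot h=(a+b+\sigma(x,y),x+y)$ and $g\cdot h^{-1}=(a-b-\sigma(x,y),x-y)$. As $\sigma(x,y)=\sigma(x\wedge y)$, the elements $\alpha_a+\alpha_b+x\wedge y$ and $\alpha_a-\alpha_b-x\wedge y$ are legitimate representatives, and well-definedness (already established) allows me to use them:
\begin{align*}
F(g\cdot h)&=(\alpha_a+\alpha_b+x\wedge y)\wedge(x+y),\\
F(g\cdot h^{-1})&=(\alpha_a-\alpha_b-x\wedge y)\wedge(x-y).
\end{align*}

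Finally I would add these two expressions. Every term of the form $(x\wedge y)\wedge x$ or $(x\wedge y)\wedge y$ vanishes for having a repeated factor, while the cross terms $\alpha_a\wedge y$ and $\alpha_b\wedge x$ cancel between the two lines; what survives is $2\,\alpha_a\wedge x+2\,\alpha_b\wedge y=2F(g)+2F(h)$. The only point requiring genuine care is this bookkeeping together with the inverse computation in $U$; there is no real obstacle, since the relations defining $\Omega$ absorb the indeterminacy of $\alpha$ and the antisymmetry of the wedge kills all unwanted terms.
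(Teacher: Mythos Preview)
Your proof is correct and follows essentially the same route as the paper: well-definedness via exactness of the Hopf sequence, the inverse $(b,y)^{-1}=(-b,-y)$, then the same choice of lifts $\alpha_a\pm\alpha_b\pm x\wedge y$ and the same expansion in $\Lambda^3 H_1(\Gamma,\C)$. The only difference is that you spell out the cancellations (repeated wedge factors, cross terms) that the paper leaves to the reader.
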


\begin{proof}
  Since the Hopf sequence above is exact, different choices of $\alpha$
  differ by elements of the form $c(u)$ with $u\in H_2(\Gamma,\C)$;
  and by definition of $\Omega$ these do not impact the value of $F(a,x)$.
  Now, if $(a,x),(b,y)\in U$ we have $(b,y)^{-1}=(-b,-y)$, so
  \[F((a,x)(b,y))+F((a,x)(b,y)^{-1})=(\alpha+\beta+x\wedge y)\wedge(x+y) +
  (\alpha-\beta-x\wedge y)\wedge (x-y), \]
  where $\alpha,\beta$ are lifts of $a,b$ to $\Lambda^2H_1(\Gamma,\C)$.
  Expanding this expression and simplifying, we get
  $2\alpha\wedge x+2\beta\wedge y$ as expected.
\end{proof}

Let us relate $\Gamma$ with $U$. A map $\Theta\colon\Gamma\to U$,
$\gamma\mapsto(\theta(\gamma),\overline{\gamma})$ (where the overline stands
for the abelianization) is a morphism if and only if for all
$\gamma,\delta\in\Gamma$ we have
$\theta(\gamma\delta)-\theta(\gamma)-\theta(\delta)=
\sigma(\overline{\gamma},\overline{\delta})\in A$.
Such a map exists if and only if the class of $\sigma$ is zero in
$H^2(\Gamma,A)$, and we claim that this holds tautologically.
As $A$ is a divisible group, the universal coefficients theorem tells
us that the evaluation map $H^2(\Gamma,A)\to\Hom(H_2(\Gamma,\Z),A)$
is an isomorphism. Hence it is sufficient to show that $\sigma$ vanishes on
generators of $H_2(\Gamma,\Z)$. By the Hopf formula, elements of
$H_2(\Gamma,\Z)$ are expressions of the form $r=\prod_i[x_i,y_i]$ which
vanish in $\Gamma$. By definition, the value of $\sigma$ on $r$ is the class
of $r$ in $A$ which is trivial by definition of $r$.
This proves the existence of such a map $\theta$.

Alternatively, and more explicitly, we may define two set-theoretic sections
$s_1,s_2\colon \Gamma_0\to\Gamma$ of the projection
$p\colon \Gamma\to\Gamma_0$, where $\Gamma_0$ is the quotient of
$H_1(\Gamma,\Z)$ by its torsion, by letting
$s_1(a_1^{n_1}\cdots a_r^{n_r})=a_1^{n_1}\cdots a_r^{n_r}$
and $s_2(a_1^{n_1}\cdots a_r^{n_r})=a_r^{n_r}\cdots a_1^{n_1}$ with
the notation of Equation~\eqref{ecrireg}. Then for $i=1,2$,
the map $\theta_i\colon x\mapsto x s_i(p(x))^{-1}$ may be viewed as a
map from $\Gamma$ to $A$, and it turns out $\theta=\theta_1+\theta_2$
is a solution.

Up to linearizing the map $F\circ\Theta\colon\Gamma\to\Omega$ as we did for
all parallelogram maps, we may compose it with the map $\varepsilon_3$, and
the following observation shows that $F$ is a ``universal'' solution
to the parallelogram problem.
\begin{lemma}\label{lem:id}
  The composition $F\circ\Theta\circ\varepsilon_3\colon\Omega\to\Omega$
  is the identity.
\end{lemma}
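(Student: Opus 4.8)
The plan is to evaluate the endomorphism directly on the generators of $\Omega$. First observe that since $F$ is a parallelogram function on $U$ (by the previous lemma) and $\Theta\colon\Gamma\to U$ is a group morphism, the composition $F\circ\Theta\colon\Gamma\to\Omega$ is an $\Omega$-valued parallelogram function. By Lemma~\ref{lem:ParallCubiques} the map $(F\circ\Theta)\circ\varepsilon_3$ is therefore an alternating trimorphism, so it factors through $\Lambda^3 H_1(\Gamma,\C)$; and the argument of Lemma~\ref{H2necessaire}, which applies verbatim to $\Omega$-valued functions, shows that it kills every $c(u)\wedge v$. Hence $F\circ\Theta\circ\varepsilon_3$ descends to a well-defined endomorphism of $\Omega$, and it suffices to compute it on a class $\overline{a}\wedge\overline{b}\wedge\overline{c}$ with $a,b,c\in\Gamma$, since these span $\Omega$.

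For the computation I would avoid expanding $\varepsilon_3(a,b,c)=(a-1)(b-1)(c-1)$ term by term, and instead invoke Equation~\eqref{eq:VersGGG} applied to $f=F\circ\Theta$:
\[
2\,(F\circ\Theta\circ\varepsilon_3)(a,b,c)=F(\Theta(a[b,c]))-F(\Theta(a)).
\]
Because $\Theta$ is a morphism, $\Theta(a[b,c])=\Theta(a)\,[\Theta(b),\Theta(c)]$, so everything reduces to computing a commutator in the central extension $U$. A direct calculation with the cocycle product $(p,x)(q,y)=(p+q+\sigma(x,y),x+y)$ gives $[\,(p,x),(q,y)\,]=(2\sigma(x,y),0)$, using antisymmetry of $\sigma$; hence $\Theta(a[b,c])=(\theta(a)+2\sigma(\overline{b},\overline{c}),\overline{a})$.

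It then remains to apply $F$. If $\alpha\in\Lambda^2 H_1(\Gamma,\C)$ is a lift of $\theta(a)$, i.e.\ $\sigma(\alpha)=\theta(a)$, then $\alpha+2\,\overline{b}\wedge\overline{c}$ is a lift of $\theta(a)+2\sigma(\overline{b},\overline{c})$, so
\[
F(\Theta(a[b,c]))-F(\Theta(a))=\bigl(\alpha+2\,\overline{b}\wedge\overline{c}\bigr)\wedge\overline{a}-\alpha\wedge\overline{a}=2\,\overline{b}\wedge\overline{c}\wedge\overline{a}=2\,\overline{a}\wedge\overline{b}\wedge\overline{c}
\]
in $\Omega$, the last equality being the even cyclic reordering. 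Dividing by $2$ gives $(F\circ\Theta\circ\varepsilon_3)(a,b,c)=\overline{a}\wedge\overline{b}\wedge\overline{c}$, which is exactly the image of $\overline{a}\wedge\overline{b}\wedge\overline{c}$ under the quotient map to $\Omega$; so the descended endomorphism is the identity.

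The main thing to be careful about is the bookkeeping of the quotient: one must check that the $\Omega$-valued analogues of Lemma~\ref{lem:ParallCubiques} and Lemma~\ref{H2necessaire} genuinely force the map to descend to $\Omega$ in both source and target, so that the statement ``equals the identity of $\Omega$'' is meaningful. The arithmetic itself is short, the only delicate points being the sign in the commutator $[\Theta(b),\Theta(c)]=(2\sigma(\overline{b},\overline{c}),0)$ and the cyclic reordering $\overline{b}\wedge\overline{c}\wedge\overline{a}=\overline{a}\wedge\overline{b}\wedge\overline{c}$; both follow from antisymmetry of $\sigma$ and of the wedge.
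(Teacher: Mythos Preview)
Your proof is correct and follows essentially the same route as the paper's: both invoke Equation~\eqref{eq:VersGGG} for the parallelogram function $F\circ\Theta$ to reduce to computing $F(\Theta(a[b,c]))-F(\Theta(a))$, and both end with the cyclic reordering $\overline{b}\wedge\overline{c}\wedge\overline{a}=\overline{a}\wedge\overline{b}\wedge\overline{c}$. The only cosmetic difference is that you obtain the key identity $\theta(a[b,c])=\theta(a)+2\sigma(\overline{b},\overline{c})$ by computing the commutator $[\Theta(b),\Theta(c)]=(2\sigma(\overline{b},\overline{c}),0)$ directly in $U$, whereas the paper derives it from the cocycle relation $\theta(\gamma\delta)=\theta(\gamma)+\theta(\delta)+\sigma(\overline{\gamma},\overline{\delta})$; these are two phrasings of the same short computation.
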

This concludes the proof of Theorem~\ref{theo:parallel}, as for any map
$\Phi\in\mathcal{E}(\Gamma)$, it suffices to set $f=\Phi\circ F\circ\Theta$
to get a map $f\in\mathcal{P}(\Gamma)$ such that $f\circ\varepsilon_3=\Phi$.
\begin{proof}[Proof of Lemma~\ref{lem:id}]
  Formally, $\varepsilon_3$ is defined only on $\C[\Gamma]^{\otimes 3}$.
  But since $F\circ\Theta$ satisfies the parallelogram identity,
  all the properties of parallelogram maps established above apply
  to it. In particular, $F\circ\Theta\circ\varepsilon_3$ reduces to
  a map $\Omega\to\Omega$, and we have, for all $x,y,z\in\Gamma$,
  \[
  2F\circ\Theta\circ\varepsilon_3(x,y,z)=
  F\circ\Theta(x[y,z])-F\circ\Theta(x) =
  \left( \widehat{\theta}(x[y,z])-\widehat{\theta}(x)\right)\wedge\overline{x},
  \]
  where $\widehat{\theta}(\gamma)$ is any lift of $\theta(\gamma)$ to
  $\Lambda^2H_1(\Gamma,\Z)$.
  Now, it follows from the defining formula of $\theta$ that
  $\theta(x[y,z])=\theta(x)+2\sigma(y,z)$; it follows that
  $2F\circ\Theta\circ\varepsilon_3(x,y,z)=
  2y\wedge z\wedge x=2x\wedge y\wedge z$.
\end{proof}

Now that the proof of Theorem~\ref{theo:parallel} is complete, let us add
a few words to mention that this description of $\mathcal{E}(\Gamma)$, or
dually, of the space $\Omega$ above can be computed effectively given a
finite presentation of $\Gamma$. Suppose $\Gamma=F/R$ where $F$ is the free
group on the letters $a_1,\ldots,a_r$ and $R$ its normal subgroup generated
by the words $r_1,\ldots,r_k$. Up to simple operations on the $r_j$, we may
suppose that for some $\ell$ their images
$\overline{r_1},\ldots,\overline{r_\ell}$ in the abelianization of $F$ freely
generate an abelian subgroup, and that $\overline{r_j}=0$ for all $j>\ell$.
All elements of $R$, resp.~$R\cap[F,F]$, are equivalent, modulo $[R,F]$, to
products of the form $r_1^{n_1}\cdots r_k^{n_k}$,
resp.~$r_{\ell+1}^{n_{\ell+1}}\cdots r_k^{n_k}$.
In other words, the abelian group $H_2(\Gamma,\Z)$ is finitely generated by
the elements $r_{\ell+1},\ldots,r_k$, although in general it may be difficult
to know if these elements satisfy some relations in $H_2(\Gamma,\Z)$.
Nevertheless $\Omega$ is the (computable) quotient of
$\Lambda^3H_1(\Gamma,\C)$ by all elements of the form $c(r_j)\wedge a_m$
with $j\geqslant\ell+1$ and where $a_m$ are generators of $H_1(\Gamma,\C)$.


\subsection{Examples}
For every $n\in \N$, we have two opposite examples. The first one is $F_n$
for which $H_2(F_n,\Z)=0$ and hence
$\mathcal{E}(F_n)=\Lambda^3H_1(F_n,\C)^*$, it is generated by the maps
raised in the introduction, for each choice of three generators.
The other example is $\Z^n$ for which $H_2(\Z^n,\Z)=\Lambda^2\Z^n$ and
$c\colon H_2(\Z^n,\Z)\to \Lambda^2 \Z^n$ is an isomorphism.
Hence $\mathcal{E}(\Z^n)=0$.

More interesting examples lie in between the previous ones. Let us give
some detail on the case of the fundamental group of closed orientable
surfaces of genus $g\geqslant 2$, denoting by $\Sigma_g$ this surface and by
$\Gamma_g$ its fundamental group.
Then $H_2(\Gamma_g,\Z)=H_2(\Sigma_g,\Z)=\Z$ is generated by the fundamental
class $[\Sigma_g]$. Moreover, if we have the presentation
\[ \Gamma_g=\langle a_1,b_1,\ldots,a_g,b_g \mid
  [a_1,b_1]\cdots[a_g,b_g]=1\rangle \]
then $c([\Sigma_g])=\sum_{i=1}^g a_i\wedge b_i$. This gives the description
of $\mathcal{E}(\Gamma_g)$ as the space of linear forms
$\Phi :\Lambda^3 H_1(\Gamma_g,\Z)\to\C$ such that
$\sum_{i=1}^g \Phi(x\wedge a_i\wedge b_i)=0$
for all $x\in H_1(\Gamma_g,\Z)$. This is trivial when $g=2$, as, for
example, $a_1\wedge a_2\wedge b_2$ equals $-a_1\wedge a_1\wedge b_1$ in
the quotient $\Omega$ of $\Lambda^3H_1(\Gamma,\C)$ by $H_2\wedge H_1$, but
it is nontrival as soon as $g\geqslant 3$.

The group $\Aut(\Gamma_g)$ acts on $\Hom(\Gamma_g,\SL_2(\C))$ by precomposition.
The induced action on $X(\Gamma_g)$ factors through the group
$\operatorname{Out}(\Gamma_g)$ of outer automorphism also known as the
(extended) mapping class group. This action has been extensively studied as
it extends the action of the mapping class group of the Teichm\"uller space,
a connected component of the real part of $X(\Gamma_g)$.
Goldman popularized many questions around the dynamics of this action,
see \textsl{e.g.}~\cite{goldman06}.
In some cases, understanding the neighbourhood of the trivial representation
may be useful, as in~\cite{FunarMarche} where the
{\em Torelli group} (\textsl{i.e.}, the subgroup
$\mathcal{I}_g$ of $\operatorname{Out}(\Gamma_g)$ acting trivially on
the abelianization $H_1(\Gamma_g,\Z)$) is shown to
act ergodically on some component of the real part of $X(\Gamma_g)$.
Let us show that the tangent action of $\Out(\Gamma_g)$ at the trivial
character in $X(\Sigma_g)$ is related to the Johnson homomorphism.

The group $\Aut(\Gamma_g)$ also acts on $\mathcal{P}(\Gamma_g)$
by precomposition and because parallelogram functions are invariant by
conjugation this action also factors through the group
$\operatorname{Out}(\Gamma_g)$.
By restriction, $\mathcal{I}_g$ acts on the exact
sequence~\eqref{suiteexacteincomplete},
and its action is trivial
on the extreme terms.
This
defines a morphism
$q\colon\mathcal{I}_g\to\Hom(\mathcal{E}(\Gamma),\mathcal{Q}(\Gamma))$,
such that for all $\phi\in\mathcal{I}_g$,
\[ f\circ\phi=f+q(\phi)(f\circ\varepsilon_3). \]
Recall from~\cite[Chap.~6]{farbmargalit} that the Johnson morphism
$\tau:\mathcal{I}_g\to \Hom(H_1(\Gamma_g,\Z),\Gamma_g^{(2)}/\Gamma_g^{(3)})$
is defined by the formula $\tau(\phi)(\overline{x})=\phi(x)x^{-1}$ for
$x\in\Gamma_g$. Then, for any $x\in \Gamma_g$ we get
\[ f(\phi(x))=f(x\cdot \phi(x)x^{-1})=
  f(x)+2f\circ\varepsilon_3(\overline{x}\wedge y), \]
by formula~\eqref{eq:VersGGG}, where $y$ is a lift of
$\tau(\phi)(\overline{x})$ in $\Lambda^2 H_1(\Gamma_g,\Z)$
(as in the Hopf exact sequence~\eqref{eq:Stallings}). This yields the simple
formula, $q(\phi)(\Phi)(x)=2\Phi(x\wedge \tau(\phi)(x))$: the action of the
Torelli group $\mathcal{I}_g$ on parallelogram functions is similar to the
Johnson homomorphism. This is of course valid for any group, the case of
surface groups being more classical.

The mix of binary and ternary elements in the same space
$\mathcal{P}(\Gamma)$, and the constant interplay between them, for example
by this Johnson morphism, evokes to us a genre of latino-american folk music.


\section{Obstructions and smoothness}

The description of parallelogram functions being done, we know completely
the Zariski-tangent space of $X(\Gamma)$ at the trivial representation.
We now turn to the jets of higher order.


\subsection{Proof of Theorem~\ref{theo:etale}}
As noticed from Formula~\eqref{eq:Formule4}, whenever $(f_1,\ldots,f_n)$
is a solution of Equation~\eqref{eq:Ordren} up to order $n$, we have
$f_i\in \mathcal{C}(\Gamma)$ for all $i\in\N$ and $f_1\circ\varepsilon_4=0$.
Equation~\eqref{eq:Oordren} gives $f_2\circ p= f_1\otimes f_1$.
By evaluating $f_2\circ\varepsilon_4$ at
an element of $(I^4)^4$ in Formula~\eqref{eq:Formule4},
we deduce that $f_2\circ\varepsilon_{16}=0$, and,
by immediate induction, $f_n\circ\varepsilon_{4^n}=0$: all solutions to
any order to the higher parallelogram equation~\eqref{eq:Ordren} are
polynomial. In the upcoming subsections we will obtain better bounds for
their orders; for now we deduce Theorem~\ref{theo:etale}.

As observed in the end of the proof of Lemma~\ref{lem:ParallCubiques},
for all
$a,b,c\in\Gamma$ the term $a[b,c]-a$ is equivalent to an element of $I^3$,
modulo elements of the form $\gamma\delta-\delta\gamma$ in $\C[\Gamma]$.
More explicitely,
\[ a[b,c] - a \equiv
  (abc-1)\left((b^{-1}-1)(c^{-1}-1)-(c^{-1}-1)(b^{-1}-1)\right). \]
By induction this formula gives that for all $a_1,\ldots,a_n\in\Gamma$, we
have $a_1[a_2,[a_3,\cdots[a_{n-1},a_n]\cdot\cdot\cdot]]\equiv a_1$
modulo $I^n$. It follows that for any solution $(f_1,\ldots,f_n)$ of
Equation~\eqref{eq:Ordren}, and for all $\gamma\in\Gamma$, the value of
$f_k(\gamma)$, for $k\in\{1,\ldots,n\}$ depends only on the image of
$\gamma$ in $\Gamma/\Gamma^{(4^n-1)}$. Now if
$\phi\colon\Gamma_1\to\Gamma_2$ is a morphism inducing an isomorphism
between $\Gamma_1/\Gamma_1^{(n)}\to \Gamma_2/\Gamma_2^{(n)}$ for all $n>0$,
then it induces a bijection between the set of solutions of the functional
equation~\eqref{eq:Ordren} at any order. It follows that
$\phi^*:X(\Gamma_2)\to X(\Gamma_1)$ induces an isomorphism between the
spaces of Zariski-jets at the trivial character at any order. With this in
head, Theorem~\ref{theo:etale} follows from the following theorem of Stallings.
\begin{theorem}[Stallings, Theorem~3.4 in~\cite{Stallings}]
  Let $\phi\colon \Gamma_1\to \Gamma_2$ be a morphism, that induces an
  isomorphism between $H_1(\Gamma_1,\Z)$ and $H_1(\Gamma_2,\Z)$ and an
  epimorphism from $H_2(\Gamma_1,\Z)$ to $H_2(\Gamma_2,\Z)$.
  Then $\phi$ induces an isomorphism between
  $\Gamma_1/\Gamma_1^{(n)}$ and $\Gamma_2/\Gamma_2^{(n)}$ for all
  $n\geqslant 1$.
\end{theorem}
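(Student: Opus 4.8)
This is a classical theorem of Stallings, and the route I would take relies on exactly the Hopf/five-term exact sequence already exploited in~\eqref{eq:Stallings}, combined with the short five lemma. Write $\overline{\phi}_n\colon\Gamma_1/\Gamma_1^{(n)}\to\Gamma_2/\Gamma_2^{(n)}$ for the maps induced by $\phi$ (these make sense since the lower central series is functorial). The plan is to prove by induction on $n$ that each $\overline{\phi}_n$ is an isomorphism, separating the easy surjectivity from the substantive injectivity. Surjectivity needs only that $\phi$ is onto on $H_1$: the hypothesis means the image of $\phi$ generates $\Gamma_2$ modulo $[\Gamma_2,\Gamma_2]$, and since each $\Gamma_2/\Gamma_2^{(n)}$ is nilpotent, and in a nilpotent group any subset generating modulo the commutator subgroup generates the whole group, the image of $\phi$ surjects onto $\Gamma_2/\Gamma_2^{(n)}$. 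Hence $\overline{\phi}_n$ is surjective for every $n$, and the content of the theorem is that it is also injective.

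For the induction, the base cases $n=1$ (trivial group) and $n=2$ (where $\Gamma/\Gamma^{(2)}=H_1(\Gamma,\Z)$, so this is the $H_1$ hypothesis) are immediate. For the inductive step I would compare, for $i=1,2$, the central extensions
\[ 1\to\Gamma_i^{(n)}/\Gamma_i^{(n+1)}\to\Gamma_i/\Gamma_i^{(n+1)}\to\Gamma_i/\Gamma_i^{(n)}\to 1, \]
linked vertically by $\phi$. By the short five lemma for groups, once $\overline{\phi}_n$ is known to be an isomorphism, proving that $\overline{\phi}_{n+1}$ is an isomorphism reduces to proving that the induced map on the graded pieces $\Gamma_1^{(n)}/\Gamma_1^{(n+1)}\to\Gamma_2^{(n)}/\Gamma_2^{(n+1)}$ is an isomorphism.

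To treat the graded pieces I would identify them homologically. The five-term exact sequence attached to $1\to\Gamma^{(n)}\to\Gamma\to\Gamma/\Gamma^{(n)}\to 1$ --- of which~\eqref{eq:Stallings} is the case $n=2$ --- degenerates for $n\geqslant 2$: the relevant coinvariants collapse to $\Gamma^{(n)}/\Gamma^{(n+1)}$, and since $\Gamma^{(n)}\subseteq[\Gamma,\Gamma]$ the connecting map into $H_1$ vanishes, leaving
\[ H_2(\Gamma,\Z)\to H_2(\Gamma/\Gamma^{(n)},\Z)\to\Gamma^{(n)}/\Gamma^{(n+1)}\to 0, \]
which exhibits $\Gamma^{(n)}/\Gamma^{(n+1)}$ as a cokernel. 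Naturality of the sequence produces a commuting ladder between the sequences for $\Gamma_1$ and $\Gamma_2$, in which the left vertical map $H_2(\Gamma_1,\Z)\to H_2(\Gamma_2,\Z)$ is surjective by hypothesis while the middle vertical map $H_2(\Gamma_1/\Gamma_1^{(n)},\Z)\to H_2(\Gamma_2/\Gamma_2^{(n)},\Z)$ is an isomorphism, the latter because $\overline{\phi}_n$ is an isomorphism of groups and $H_2$ is a functor. A short diagram chase (the four lemma) then forces the induced map on cokernels, which is precisely the map on graded pieces, to be an isomorphism, closing the induction.

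The one place demanding care is the homological bookkeeping of the third paragraph: checking that the five-term sequence genuinely degenerates to this three-term form for all $n\geqslant 2$ --- that is, that the coinvariants $H_0(\Gamma/\Gamma^{(n)},H_1(\Gamma^{(n)}))$ reduce to $\Gamma^{(n)}/\Gamma^{(n+1)}$ and that the map to $H_1(\Gamma,\Z)$ is zero --- and that the sequence is natural in $\Gamma$ so that the ladder commutes. Granting this, both invocations of the five lemma and the diagram chase are routine, and the argument stays entirely over $\Z$, no tensoring with $\C$ being needed for Stallings' statement itself.
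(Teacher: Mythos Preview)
The paper does not supply its own proof of this statement: it is quoted verbatim as Theorem~3.4 of Stallings~\cite{Stallings} and used as a black box to conclude Theorem~\ref{theo:etale}. So there is nothing in the paper to compare your argument against.

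That said, your proposal is correct and is essentially Stallings' original proof. The three ingredients --- surjectivity on all nilpotent quotients from surjectivity on $H_1$, the short five lemma on the central extensions $1\to\Gamma^{(n)}/\Gamma^{(n+1)}\to\Gamma/\Gamma^{(n+1)}\to\Gamma/\Gamma^{(n)}\to 1$, and the identification of the graded piece as the cokernel in the degenerated five-term sequence --- are exactly the steps in~\cite{Stallings}. Your bookkeeping in the last paragraph is accurate: the coinvariants $H_0(\Gamma/\Gamma^{(n)},H_1(\Gamma^{(n)}))$ are indeed $\Gamma^{(n)}/[\Gamma,\Gamma^{(n)}]=\Gamma^{(n)}/\Gamma^{(n+1)}$, the map to $H_1(\Gamma)$ vanishes for $n\geqslant 2$ since $\Gamma^{(n)}\subset[\Gamma,\Gamma]$, and naturality of the five-term sequence is standard. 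The diagram chase you describe (left vertical surjective, middle vertical an isomorphism by induction, hence right vertical an isomorphism on cokernels) goes through without difficulty.
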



\subsection{First obstruction}
Now we turn to the proof of Theorem~\ref{theo:obstruction}. Suppose
$(f_1,f_2)$ is a solution of Equation~\eqref{eq:Ordren} up to order~$2$.
In other words, we suppose that $f_1\in\mathcal{P}(\Gamma)$ and
$f_2\circ p(a,b)=f_1(a)f_1(b)$ for all $a,b\in\Gamma$. We will prove in
this section that this forces $f_1$ to be a quadratic form.

Take $a_1,\ldots,a_6\in\Gamma$. We have, using linearity and
Formula~\eqref{eq:Formule4},
\begin{align*}
  2f_2\circ\varepsilon_6(a_1,& \ldots,a_6) =
  2f_2\circ\varepsilon_4(\varepsilon_3(a_1,a_2,a_3), a_4,a_5,a_6) \\
  & = f_2\circ p\left(\begin{array}{c}
  \varepsilon_5(a_1,a_2,a_3,a_4,a_5)\otimes a_6+\varepsilon_3(a_4,a_5,a_6)
  \otimes\varepsilon_3(a_1,a_2,a_3) \\
  +\varepsilon_5(a_1,a_2,a_3,a_4,a_6)\otimes a_5+
  \varepsilon_5(a_5,a_1,a_2,a_3,a_6^{-1})\otimes a_4 \\
  -\varepsilon_4(a_1,a_2,a_3,a_6^{-1})\otimes\varepsilon_2(a_4,a_5)
  -\varepsilon_2(a_4,a_6)\otimes\varepsilon_4(a_5,a_1,a_2,a_3) \\
  -\varepsilon_2(a_6,a_5^{-1})\otimes\varepsilon_4(a_1,a_2,a_3,a_4)
  \end{array}\right).
\end{align*}
Since $f_1$ vanishes on $I^4$, this simplifies to
\[ 2f_2\circ\varepsilon_6(a_1,\ldots,a_6)=f_1\circ\varepsilon_3(a_1,a_2,a_3)
  f_1\circ\varepsilon_3(a_4,a_5,a_6). \]
As $f_1\in \mathcal{P}(\Gamma)$, the right hand side of this expression
changes sign upon exchanging $a_5$ with $a_6$, while the left hand side
remains equal upon permuting cyclically the $a_i$ because $f_2$ is invariant
by conjugation. In other words, the permutation $(5~6)$ changes the sign of
this complex number, while the permutation $(2~3~4~5~6~1)$ leaves it
invariant. As the latter has signature $-1$, it follows that for all
$a_1,\ldots,a_6$ we have $f_2\circ\varepsilon_6(a_1,\ldots, a_6)=0$,
and that $f_1\circ\varepsilon_3=0$, ie, $f_1\in\mathcal{Q}(\Gamma)$.

Note also that from all the above relations, we get that the map
$\Gamma^2\to\C$ defined by $\langle a,b\rangle= f_1\circ\varepsilon_2(a,b)$
is bilinear and symmetric, and that, for all $a,b,c,d\in\Gamma$, by applying
again Formula~\eqref{eq:Formule4} we have
\begin{equation}
  \label{eq:carre} f_2\circ\varepsilon_4(a,b,c,d)=
  \langle a,b\rangle \langle c,d\rangle+\langle a,d\rangle\langle b,c\rangle
  -\langle a,c\rangle\langle b,d\rangle.
\end{equation}
This expression being linear in each variable, we get that
$f_2\circ\varepsilon_5$ vanishes, hence $f_2$ is a polynomial function of
order~$\leqslant 4$.


\subsection{Second obstruction}

Now let $(f_1,f_2,f_3)$ be a solution to the order~$3$. Then
$f_2\circ\varepsilon_4$ satisfies Equation~\eqref{eq:carre}. We may write
different formulas for $f_3\circ\varepsilon_6$, by using the equalities
\[ \varepsilon_6(a_1,\ldots,a_6)=\varepsilon_4(\varepsilon_3(a_1,a_2,a_3),
  a_4,a_5,a_6)=\varepsilon_4(\varepsilon_2(a_1,a_2),\varepsilon_2(a_3,a_4),
  a_5,a_6).  \]
The first equality, together with Formula~\eqref{eq:Formule4} and the facts
that $f_1\circ\varepsilon_3$ and $f_2\circ\varepsilon_5$ vanish, gives
\begin{align*}
  2f_3\circ\varepsilon_6(a_1,\ldots,a_6)=&f_1\circ\varepsilon_2(a_4,a_5)
  f_2\circ\varepsilon_4(a_1,a_2,a_3,a_6)-f_1\circ\varepsilon_2(a_4,a_6)
  f_2\circ\varepsilon_4(a_5,a_1,a_2,a_3)\\
  &+f_1\circ\varepsilon_2(a_5,a_6)f_2\circ\varepsilon_4(a_1,a_2,a_3,a_4),
\end{align*}
while the second gives
\begin{align*}
  2f_3\circ\varepsilon_6(a_1,\ldots,a_6)=&f_1\circ\varepsilon_2(a_1,a_2)
  f_2\circ\varepsilon_4(a_3,a_4,a_5,a_6)+f_1\circ\varepsilon_2(a_5,a_6)
  f_2\circ\varepsilon_4(a_1,a_2,a_3,a_4)\\
  &-f_1\circ\varepsilon_2(a_3,a_4)f_2\circ\varepsilon_4(a_1,a_2,a_6,a_5).
\end{align*}
Now by expanding both these expressions, by using Equation~\eqref{eq:carre}
and by writing the equality between the two, we get an equality that fits
in the following determinant:
\begin{align*}
  \left|\begin{array}{ccc}
  \langle a_1,a_3\rangle & \langle a_4,a_3\rangle & \langle a_2,a_3\rangle \\
  \langle a_1,a_5\rangle & \langle a_4,a_5\rangle & \langle a_2,a_5\rangle \\
  \langle a_1,a_6\rangle & \langle a_4,a_6\rangle & \langle a_2,a_6\rangle
  \end{array}\right|=0.
\end{align*}
It follows that the induced bilinear form $\langle\cdot,\cdot\rangle$ on
$H_1(\Gamma,\C)$ cannot have a family of three orthonormal vectors: its
rank cannot exceed~$2$. This is a non-trivial condition as soon as
$\dim H_1(\Gamma,\C)\geqslant 3$.


\subsection{No other universal obstructions}

First, let us observe that there are no other ``universal'' -- i.e. valid
for any group -- obstructions of higher order.
\begin{proposition}\label{prop:PasPlusQueTrois}
  Let $\Gamma=F_n$ be a free group. Let $\langle\cdot,\cdot\rangle$ be a
  symmetric bilinear product of rank $\leqslant 2$ on $H_1(F_n,\C)$.
  Then there exists a smooth deformation
  of the trivial character, $[\rho_t]$, such that
  \[ \tr(\rho_t(\gamma))=2+t\langle\gamma,\gamma\rangle+
  o(t)\text{, for all }\gamma\in\Gamma. \]
\end{proposition}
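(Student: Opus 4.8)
The plan is to construct an explicit analytic family of representations realizing the prescribed form, taking advantage of the fact that $\Gamma=F_n$ is free, so that a representation is nothing but an arbitrary choice of images of the generators $a_1,\dots,a_n$. The first step is purely algebraic: over $\C$, any symmetric bilinear form $\langle\cdot,\cdot\rangle$ of rank $\leqslant 2$ on $H_1(F_n,\C)$ can be written as $\langle\gamma,\gamma\rangle=\ell_1(\overline\gamma)\,\ell_2(\overline\gamma)$ for two linear forms $\ell_1,\ell_2\colon H_1(F_n,\C)\to\C$ (a hyperbolic pair if the rank is $2$, a square $\ell_1=\ell_2$ if the rank is $1$, and $\ell_1=\ell_2=0$ if the rank is $0$), where $\overline\gamma$ denotes the image of $\gamma$ in $H_1$.

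Next I set $N_+=\left(\begin{smallmatrix}0&1\\0&0\end{smallmatrix}\right)$, $N_-=\left(\begin{smallmatrix}0&0\\1&0\end{smallmatrix}\right)$, and $B_i=\ell_1([a_i])\,N_++\ell_2([a_i])\,N_-\in\mathfrak{sl}_2(\C)$ for each generator, and define $\rho_s(a_i)=\exp(s B_i)$, which determines a representation $\rho_s\colon F_n\to\SL_2(\C)$ for every $s\in\C$. Writing $B(\gamma)=\tfrac{d}{ds}\big|_0\rho_s(\gamma)$ and differentiating a word shows $B(\gamma)=\sum_i n_iB_i=\ell_1(\overline\gamma)N_++\ell_2(\overline\gamma)N_-$, where $\overline\gamma=\sum_i n_i[a_i]$; note that $\rho_0$ is trivial, so this is a deformation of the trivial character, and I will eventually reparametrize by $t=s^2$.

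The key step, and the main obstacle, is to check that the resulting path is genuinely smooth \emph{in $t$}, not merely in $s=\sqrt t$: this is exactly where the rank $\leqslant 2$ hypothesis is used. Conjugation by $D=\mathrm{diag}(1,-1)$ sends $N_+\mapsto -N_+$ and $N_-\mapsto -N_-$, hence $B_i\mapsto -B_i$, so $D\rho_s(a_i)D^{-1}=\exp(-sB_i)=\rho_{-s}(a_i)$; thus $D\rho_sD^{-1}=\rho_{-s}$ as representations and the character $\chi_s(\gamma)=\tr\rho_s(\gamma)$, which is entire in $s$, is \emph{even} in $s$. Therefore $\chi_s(\gamma)$ is an analytic function of $t=s^2$, the two square-root branches being conjugate, so $t\mapsto[\rho_t]$ is a well-defined smooth (indeed analytic) path in $X(F_n)$ through the trivial character. (For a form of rank $3$ the $B_i$ would have to span all of $\mathfrak{sl}_2(\C)$, no sign-reversing conjugation would exist, the odd powers of $\sqrt t$ would survive in the traces, and no deformation analytic in $t$ could exist, in agreement with the order-$3$ obstruction of Theorem~\ref{theo:obstruction}.)

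It remains to compute the first-order term. For $\gamma$ near the identity $\rho_s(\gamma)=\exp C$ with $C\in\mathfrak{sl}_2(\C)$ and $C=sB(\gamma)+O(s^2)$; since $C$ is traceless, its eigenvalues are $\pm\mu$ with $\mu^2=\tfrac12\tr(C^2)$, whence $\tr\rho_s(\gamma)=2\cosh\mu$. Using $B(\gamma)^2=\ell_1(\overline\gamma)\,\ell_2(\overline\gamma)\,\mathrm{Id}$ and the evenness established above, a short expansion gives
\[
\tr\rho_t(\gamma)=2+\tfrac{t}{2}\tr\!\big(B(\gamma)^2\big)+O(t^2)=2+t\,\ell_1(\overline\gamma)\,\ell_2(\overline\gamma)+O(t^2)=2+t\,\langle\gamma,\gamma\rangle+o(t),
\]
which is the desired expansion and completes the proof.
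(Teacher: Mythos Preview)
Your proof is correct and shares its opening move with the paper: both begin by writing the rank~$\leqslant 2$ quadratic form as a product $\langle\gamma,\gamma\rangle=\ell_1(\overline\gamma)\ell_2(\overline\gamma)$ of two linear forms on $H_1(F_n,\C)$. From there the constructions diverge. The paper defines the family directly in the parameter~$t$ by
\[
\rho_t(a_i)=\begin{pmatrix}1&\ell_1(a_i)\\0&1\end{pmatrix}\begin{pmatrix}1&0\\t\,\ell_2(a_i)&1\end{pmatrix},
\]
a product of two unipotents in which $t$ appears linearly, and then checks the first-order trace on products of generators. Your route instead takes $\rho_s(a_i)=\exp\!\big(s(\ell_1(a_i)N_++\ell_2(a_i)N_-)\big)$, proves the character is even in $s$ via the conjugation by $D=\mathrm{diag}(1,-1)$, and reparametrizes by $t=s^2$. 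The paper's argument is shorter, avoiding both the Baker--Campbell--Hausdorff step and the evenness discussion. What your approach buys is a transparent explanation of \emph{why} rank~$\leqslant 2$ is the right hypothesis: the matrices $B_i$ lie in the $(-1)$-eigenspace of $\mathrm{Ad}(D)$ precisely because they are supported on $\{N_+,N_-\}$, and this is what forces the odd powers of $s$ to cancel in the trace. Your parenthetical remark linking the failure of this symmetry in rank~$3$ to the order-$3$ obstruction of Theorem~\ref{theo:obstruction} is a nice touch absent from the paper's treatment.
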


\begin{proof}
  Let $F_n=\langle a_1,\ldots,a_n\rangle$ be a free group of rank~$n$.
  Being of rank $\leqslant 2$, the quadratic form associated to
  $\langle\cdot,\cdot\rangle$
  can be written as the product of  two linear forms $\ell_1,\ell_2$ on
  $H_1(F_n,\C)$ 
  (observe that $\ell_1^2+\ell_2^2=(\ell_1+i\ell_2)(\ell_1-i\ell_2)$).

  Now we construct $\rho_t$ as follows. We simply put
  \[ \rho_t(a_i)=\left(\begin{array}{cc}1 & \ell_1(a_i) \\ 0 & 1 \end{array}
  \right)\left(\begin{array}{cc}1 & 0 \\ t\ell_2(a_i) & 1\end{array}\right).\]
  We check that $\tr\rho_t(a_ia_j)=2+t\langle a_i,a_j\rangle+o(t)$,
  for all $i,j$.
\end{proof}


\subsection{Lifting deformations}\label{sec:lifting}

The proof of Proposition~\ref{prop:PasPlusQueTrois} suggests that
deformations of characters can be lifted to deformations of representations;
this is the content of Theorem~\ref{theo:lifting}, that we will prove now.
Thus, let us consider a function $f\colon\Gamma\to\C[[t]]$ satisfying
$f(\gamma\delta)+f(\gamma\delta^{-1})=f(\gamma)f(\delta)$ for all
$\gamma,\delta\in\Gamma$. By Theorem~\ref{theo:procesi},
$f$ can be viewed as an algebra morphism
$\phi\colon A(\Gamma)^{\SL_2(\C)}\to\C[[t]]$, which maps the function
$t_\gamma$ to $f(\gamma)$. We want to prove that there exists a morphism
$\rho\colon\Gamma\to\SL_2(\C[[t]])$ such that $\tr(\rho(\gamma))=f(\gamma)$
for all $\gamma\in\Gamma$.

Let $K$ be the field of fractions of $\C[[t]]$, \textsl{i.e.},
the field of formal
Laurent series in $t$, and let $\overline{K}$ be its algebraic closure.
By invariant theory over $\overline{K}$, the map $R(\Gamma)\to X(\Gamma)$
is surjective (see \textsl{e.g.}~\cite[Theorem~5.9]{Mukai}).
In particular, there exists
an algebra morphism $\overline{\phi}\colon A(\Gamma)\to\overline{K}$
extending $\phi$. This defines a representation
$\overline{\rho}\colon\Gamma\to\SL_2(\overline{K})$ by sending the generator
$\gamma_l$ to the matrix with entries $\overline{\phi}(a^l_{i,j})$ in the
notation of Subsection~\ref{introcarac}. In particular, for all $\gamma$,
$\tr(\overline{\rho}(\gamma))=f(\gamma)$. We want to prove that
$\overline{\rho}$ can be conjugated to a representation in $\SL_2(\C[[t]])$.
In fact, it suffices to conjugate it into $\SL_2(K)$, as the following
observation shows.
\begin{lemma}\label{lem:TousEnsemble}
  Let $\Gamma$ be a finitely generated group. Let $K$ be the field of
  fractions of $\C[[t]]$. Let $\rho\colon\Gamma\to\SL_2(K)$ and suppose that
  for all $\gamma\in\Gamma$, $\tr(\rho(\gamma))\in\C[[t]]$. Then $\rho$ is
  conjugate, in $\SL_2(K)$, to a representation in $\SL_2(\C[[t]])$.
\end{lemma}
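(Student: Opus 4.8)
The plan is to study the action of $\SL_2(K)$ on its Bruhat--Tits tree $T$ and to show that the image $\rho(\Gamma)$ fixes a vertex. Recall that $\mathcal{O}=\C[[t]]$ is the valuation ring of $K$, that the vertices of $T$ are the homothety classes of $\mathcal{O}$-lattices in $K^2$, and that $\SL_2(K)$ acts on $T$ simplicially and without inversion, the stabiliser of a vertex being a conjugate of $\SL_2(\mathcal{O})$. In this language, conjugating $\rho$ into $\SL_2(\mathcal{O})$ is precisely the same as exhibiting a $\rho(\Gamma)$-invariant lattice.

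First I would translate the trace hypothesis into geometry. For $g\in\SL_2(K)$ the eigenvalues are the roots of $x^2-\tr(g)x+1$; reading off the Newton polygon of this polynomial, the assumption $v(\tr g)\geq 0$ (with $v$ the $t$-adic valuation) forces both roots $\lambda,\lambda^{-1}$ to have valuation $0$. Since the translation length of $g$ on $T$ equals $|v(\lambda)-v(\lambda^{-1})|$, this shows that \emph{every element with integral trace is elliptic}, i.e.\ has a nonempty fixed subtree $\operatorname{Fix}(g)\subset T$. As all the matrices $\rho(\gamma)$ --- and all their products, which again lie in $\rho(\Gamma)$ --- have integral traces, they are all elliptic.

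Next I would produce a common fixed point from two standard facts: (i) if $g,h$ are elliptic and $gh$ is elliptic too, then $\operatorname{Fix}(g)\cap\operatorname{Fix}(h)\neq\emptyset$, for otherwise $gh$ would translate along the bridge joining the two fixed subtrees and be hyperbolic; and (ii) subtrees of a tree have the Helly property, so finitely many pairwise intersecting subtrees share a point. Choosing generators $\gamma_1,\dots,\gamma_n$ of $\Gamma$, the subtrees $\operatorname{Fix}(\rho(\gamma_i))$ intersect pairwise by (i) (each $\rho(\gamma_i\gamma_j)$ being elliptic), so by (ii) they contain a common vertex $v$. A point fixed by all generators is fixed by the whole group, so $\rho(\Gamma)$ fixes $v$, giving a $\rho(\Gamma)$-invariant lattice $L$.

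The hard part is the final determinant bookkeeping. Writing $L=P\mathcal{O}^2$ with $P\in\GL_2(K)$ gives $P^{-1}\rho(\gamma)P\in\GL_2(\mathcal{O})\cap\SL_2(K)=\SL_2(\mathcal{O})$ for all $\gamma$, but to take the conjugator $P$ inside $\SL_2(K)$ one must arrange $\det P\in(K^\times)^2$, i.e.\ $v(\det P)$ even. Since $\SL_2(K)$ preserves the bipartition of $T$ into the two vertex types $v(\det P)\bmod 2$, this parity is a genuine invariant of $v$ and cannot be altered by rescaling $L$ or changing its basis. I would circumvent this by selecting an invariant lattice of \emph{even} type, using that the reduction of $\rho$ modulo $t$ at $v$ is reducible: in the situation of Theorem~\ref{theo:lifting} the character is trivial at $t=0$, so $\overline{\rho}(\Gamma)$ consists of trace-$2$, hence unipotent, elements of $\SL_2(\C)$ and is simultaneously triangularisable by Kolchin's theorem. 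A common eigenline of $\overline{\rho}(\Gamma)$ is exactly a vertex adjacent to $v$ that is also fixed by $\rho(\Gamma)$; as adjacent vertices have opposite types, this furnishes an invariant lattice of even type. For such a lattice $v(\det P)$ is even, so after scaling $P$ by a suitable $\lambda\in K^\times$ we may assume $\det P=1$, and then $P^{-1}\rho(\cdot)P$ is the sought representation into $\SL_2(\mathcal{O})=\SL_2(\C[[t]])$.
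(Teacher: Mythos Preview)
Your approach coincides with the paper's: act on the Bruhat--Tits tree, show every $\rho(\gamma)$ is elliptic (the paper merely asserts this, you justify it via the Newton polygon), and deduce a global fixed vertex for the finitely generated group $\rho(\Gamma)$ (the paper cites Serre's corollary, you spell out the pairwise-intersection-plus-Helly argument). Up to this point the two proofs are the same, yours with more detail.

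Where you diverge is in the last paragraph, and you are right to worry. The stabiliser of a vertex of odd type is a $\GL_2(K)$-conjugate of $\SL_2(\mathcal O)$ but not an $\SL_2(K)$-conjugate, since $\SL_2(K)$ preserves the bipartition of $T$. In fact the lemma \emph{as literally stated} is false: with $P=\bigl(\begin{smallmatrix}1&0\\0&t\end{smallmatrix}\bigr)$, the representation of $F_2$ given by $\rho(a)=P\bigl(\begin{smallmatrix}0&1\\-1&0\end{smallmatrix}\bigr)P^{-1}$ and $\rho(b)=P\bigl(\begin{smallmatrix}1&1\\0&1\end{smallmatrix}\bigr)P^{-1}$ has all traces in $\C\subset\C[[t]]$, yet its unique fixed vertex is $[\mathcal O\oplus t\mathcal O]$, of odd type, so no $\SL_2(K)$-conjugate of $\rho$ lands in $\SL_2(\mathcal O)$. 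The paper's proof simply glosses over this point.

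Your repair is correct but imports an extra hypothesis (character $\equiv 2\bmod t$) from Theorem~\ref{theo:lifting}; it therefore proves exactly what that theorem needs, not the lemma as written---which, by the example above, cannot be saved anyway. A cleaner fix, sufficient for the application and avoiding the detour through Kolchin, is simply to relax the conclusion to conjugacy in $\GL_2(K)$: traces are preserved under $\GL_2(K)$-conjugation, and then the parity bookkeeping disappears entirely.
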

\begin{proof}
  Let $v$ be the valuation on $K$ defined by $v(t)=1$. Then $\Gamma$ acts on
  the Bass-Serre tree $T$ associated to $(K,v)$.
  See \textsl{e.g.}~\cite[Chap.~2]{Serre}.
  For any $\gamma\in\Gamma$, we have $\tr(\rho(\gamma))\in\C[[t]]$, and
  it follows that $\rho$ is conjugate to an element of $\SL_2(\C[[t]])$:
  hence, $\rho(\gamma)$ fixes a vertex of $T$.
  Thus, $\Gamma$ acts on $T$ by isometry, in such a way that every element
  of $\Gamma$ fixes a point. Therefore, $\Gamma$ has a global fixed point,
  see \textsl{e.g.}~\cite[Chap.~I.6.5, Corollaire~3]{Serre}.
  This means that $\rho$ is conjugate to a representation in the stabilizer
  of a point of $T$, \textsl{i.e.},
  conjugate to a representation in $\SL_2(\C[[t]])$.
\end{proof}
A similar statement is true in the more general case of $n\times n$-matrices
but with the additional assumption of absolute irreducibility,
see~\cite[Lemma~1.4.3]{CullerShalen}. 

It remains to conjugate our representation $\overline{\rho}$ into $\SL_2(K)$.
We suppose first that it is (absolutely) irreducible, and leave the (easier)
reducible case to the end of the proof. Experts would notice that the result
follows from the fact that there are no non-trivial quaternion algebras over
$K$ (see~\cite{MaclachlanReid}). We prefer to give a down-to-earth proof.
The irreducibility condition is catched by pairs of elements of $\Gamma$, by
two classical observations that we recall now.
\begin{lemma}[Corollary 1.2.2 in~\cite{CullerShalen}]\label{lem:Irred-CS}
  Let $k$ be an algebraically closed field of characteristic zero,
  let $\Gamma$ be any group, and let $\rho\colon\Gamma\to\SL_2(k)$ be a
  representation. Then $\rho$ is irreducible if and only if there exist
  $\alpha,\beta\in\Gamma$ such that $\tr \rho([\alpha,\beta])\neq 2$.
\end{lemma}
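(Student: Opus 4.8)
The plan is to reduce the statement to the two-generator case and then bootstrap from pairs of elements to the whole group. The crux is the following sublemma: for $A,B\in\SL_2(k)$, one has $\tr[A,B]=2$ if and only if $A$ and $B$ share a common eigenvector in $k^2$. Granting this, one implication of the lemma is immediate: if $\tr\rho([\alpha,\beta])\ne 2$ for some $\alpha,\beta$, then $\rho(\alpha)$ and $\rho(\beta)$ have no common eigenvector, so no line of $k^2$ can be invariant under all of $\rho(\Gamma)$, and $\rho$ is irreducible.

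For the sublemma, the implication ``common eigenvector $\Rightarrow\tr=2$'' is immediate: a common eigenvector lets me conjugate $A$ and $B$ simultaneously into upper triangular form, whence $[A,B]$ is unipotent and $\tr[A,B]=2$. The converse I would prove by a direct normal-form computation. If $A$ is scalar then every vector is an eigenvector of $A$, and $B$, over the algebraically closed field $k$, has an eigenvector, giving a common one; so I may assume $A$ is non-scalar, and after conjugation either $A=\begin{pmatrix}\lambda&0\\0&\lambda^{-1}\end{pmatrix}$ with $\lambda\neq\pm1$, or $A=\pm\begin{pmatrix}1&1\\0&1\end{pmatrix}$. Writing $B=\begin{pmatrix}a&b\\c&d\end{pmatrix}$ with $ad-bc=1$, a short computation gives in the first case $\tr[A,B]-2=-bc(\lambda-\lambda^{-1})^2$ and in the second $\tr[A,B]-2=c^2$. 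In either case $\tr[A,B]=2$ forces $A$ and $B$ to share an eigenline ($bc=0$, respectively $c=0$), which is exactly the existence of a common eigenvector. I expect this computation to be the routine-but-essential heart of the sublemma.

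It remains to treat the irreducible direction, for which I would argue the contrapositive: assuming $\tr\rho([\alpha,\beta])=2$ for all $\alpha,\beta\in\Gamma$, I produce a globally invariant line. If every $\rho(\gamma)$ is scalar this is trivial, so I pick $A=\rho(\alpha_0)$ non-scalar. By the sublemma every $\rho(\beta)$ shares an eigenvector with $A$, hence fixes one of the (at most two) eigenlines of $A$. If $A$ has a unique eigenline, all $\rho(\beta)$ fix it and $\rho$ is reducible. If $A$ has two distinct eigenlines $L_1,L_2$, then after diagonalizing $A$ each $\rho(\beta)$ is either upper or lower triangular; the passage from this pairwise dichotomy to a single globally invariant line is the main obstacle, and I would resolve it by the observation that the product of a strictly upper triangular element (fixing only $L_1$) with a strictly lower triangular element (fixing only $L_2$) fixes neither $L_1$ nor $L_2$, contradicting the dichotomy unless all elements fix a common $L_i$. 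This yields an invariant line, i.e. $\rho$ is reducible, completing the contrapositive and hence the proof.
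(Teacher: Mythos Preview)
Your argument is correct; the sublemma computation and the passage from the pairwise dichotomy to a global invariant line are both sound. The paper does not supply its own proof of this lemma---it simply cites Culler--Shalen---so there is nothing to compare against; your elementary route via the common-eigenvector criterion is exactly the standard one.
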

\begin{lemma}\label{lem:gram}
  Let $k$ be any field and let $A,B\in\SL_2(k)$. Then the determinant of the
  Gram matrix of the family $(\id,A,B,AB)\in M_2(k)^4$, with respect to the
  non-degenerate bilinear form $(M,N)\mapsto\tr(MN)$ is equal to
  $- \left(\tr[A,B]-2\right)^2$.
\end{lemma}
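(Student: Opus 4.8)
The plan is to factor the Gram matrix through the \emph{coordinate matrix} of the family $(\id,A,B,AB)$ and thereby reduce the statement to a single determinant identity. The trace pairing $\langle M,N\rangle=\tr(MN)$ makes $M_2(k)$ a $4$-dimensional space equipped with a nondegenerate symmetric form; in the standard basis $(E_{11},E_{12},E_{21},E_{22})$ its matrix is
\[
J=\begin{pmatrix}1&0&0&0\\0&0&1&0\\0&1&0&0\\0&0&0&1\end{pmatrix},\qquad \det J=-1.
\]
Writing $V$ for the $4\times 4$ matrix whose columns are the coordinates of $\id,A,B,AB$ in this basis, the Gram matrix of the lemma is exactly $G=V^{T}JV$. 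Hence $\det G=\det(J)\det(V)^{2}=-\det(V)^{2}$, which already accounts for the overall minus sign and reduces the lemma to proving $\det V=\pm(\tr[A,B]-2)$. Since both sides of this identity are polynomials with integer coefficients in the entries of $A$ and $B$ (subject to $\det A=\det B=1$), it is enough to establish it over $\C$, where I may freely invoke irreducibility and the Nullstellensatz.

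To identify $\det V$ I would first note that it is invariant under simultaneous conjugation of $A$ and $B$: conjugation acts on $M_2(k)$ by $\ad_P$, and $\det(\ad_P)=\det(P)^{2}\det(P)^{-2}=1$, so $\det V$ is unchanged. By Theorem~\ref{theo:procesi} it is therefore a polynomial in $a=\tr A$, $b=\tr B$ and $c=\tr(AB)$. Next, Cayley--Hamilton (which gives $A^{-1}=aI-A$, and lets one fill every entry of $G$ via $\tr(A^{2})=a^{2}-2$, $\tr(A^{2}B)=ac-b$, etc.) shows that $\id,A,B,AB$ span $M_2(k)$ precisely when the pair $(A,B)$ acts irreducibly; by Lemma~\ref{lem:Irred-CS} this happens exactly when $\tr[A,B]\neq 2$. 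Thus $\det V$ vanishes on the same locus as the Fricke polynomial $\tr[A,B]-2=a^{2}+b^{2}+c^{2}-abc-4$.

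It remains to upgrade the coincidence of zero loci to an equality up to a constant. The Fricke polynomial is irreducible over $\C$ (viewed as a quadratic in $c$, its discriminant is $(a^{2}-4)(b^{2}-4)$, which is not a square), so by the Nullstellensatz $\det V=u\,(\tr[A,B]-2)^{m}$ for a constant $u$ and some $m\geqslant 1$. Now $\det V$ has bidegree $(2,2)$ in the entries of $(A,B)$, while $(\tr[A,B]-2)^{m}$ has bidegree $(2m,2m)$, which forces $m=1$. Evaluating at one convenient pair, say $A=\left(\begin{smallmatrix}1&1\\0&1\end{smallmatrix}\right)$ and $B=\left(\begin{smallmatrix}1&0\\1&1\end{smallmatrix}\right)$ (for which $\det V=-1$ and $\tr[A,B]-2=1$), pins down $u=-1$; squaring then gives $\det G=-(\tr[A,B]-2)^{2}$. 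The main obstacle is precisely this identification of $\det V$: the conceptual route needs the irreducibility-plus-bidegree bookkeeping to exclude an extra factor, whereas the alternative of expanding the $4\times 4$ determinant of $V$ directly is routine but lengthy. Either way the factorization $\det G=-\det(V)^{2}$ does most of the work and avoids a blind expansion of $G$.
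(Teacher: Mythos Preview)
The paper dispenses with this lemma in one line (``may be checked by a direct computation''), so your argument is genuinely different and more conceptual. The factorization $\det G=-\det(V)^{2}$ via the coordinate matrix is an elegant reduction that explains both the sign and the perfect-square shape, and identifying $\det V$ with $\pm(\tr[A,B]-2)$ through conjugation invariance, Lemma~\ref{lem:Irred-CS}, and the irreducibility of the Fricke polynomial is a pleasant way to avoid brute expansion.

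There is, however, a real gap in the bidegree step. The equality $\det V = u(\tr[A,B]-2)^{m}$ you obtain via the Nullstellensatz is an identity in $\C[a,b,c]$, equivalently of regular functions on $\SL_2(\C)\times\SL_2(\C)$. Bidegree in the matrix entries, by contrast, is a notion on $M_2(\C)\times M_2(\C)\cong\C^{8}$, and it does \emph{not} descend to the quotient by $(\det A-1,\det B-1)$: for instance $(\det A)^{2}$ has bidegree $(4,0)$ yet equals $1$ on $\SL_2\times\SL_2$. So comparing bidegrees of two functions that agree only on this subvariety cannot constrain $m$ (and note that $\tr[A,B]-2$ is not bihomogeneous anyway, because of the constant $-2$). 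Your single evaluation at the unipotent pair gives $P=1$, which pins down $u=-1$ but is silent about $m$. The simplest repair is a second evaluation at a point with $P\neq\pm 1$: for example $A=\left(\begin{smallmatrix}1&2\\0&1\end{smallmatrix}\right)$, $B=\left(\begin{smallmatrix}1&0\\1&1\end{smallmatrix}\right)$ gives $\det V=-4$ and $P=4$, forcing $m=1$. Alternatively, restrict to the line $a=b=0$ and compute $\det V=4-c^{2}$ directly there.
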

This classical identity which we learned from~\cite{Saito} may be checked
by a direct computation. Fix $\alpha,\beta\in \Gamma$ given by
Lemma~\ref{lem:Irred-CS}, and set $A=\rho(\alpha)$ and $B=\rho(\beta)$.
Now we seek to conjugate $A$ and $B$ to the respective forms
\[ \begin{pmatrix} 0 & -1 \\ 1 & \tr A \end{pmatrix}\quad
  \text{ and }\quad\begin{pmatrix} a & b \\ c & d\end{pmatrix} \]
with $a,b,c,d$ in $K$ as follows. For $A$ it is just a matter of finding a
vector $v$ which is not an eigenvector of $A$ and considering the basis
$(v,Av)$ of $\overline{K}^2$. The entries of $B$ in this basis then satisfy
the system:
\begin{equation}\tag{S}
  a+d=\tr B,\quad b-c+d\tr A=\tr AB,\quad ad-bc=1;
\end{equation}
it will follow from Lemma~\ref{ilyaunesolution} below that this system
actually has solutions in $K$. Reciprocally, given a solution of $(S)$,
a simple computation shows that the matrix $X=B-a\id-c A$ has rank~1
(indeed we have $\tr X^2=(\tr X)^2$, and $X\ne 0$ as $A$ and $B$ do not
commute). Then for any non-zero vector $v\in \ker X$, we may check that
$(v,Av)$ is a basis in which $A$ and $B$ have the desired form.

By Lemma~\ref{lem:gram}, the matrices $\id,A,B,AB$ generate
$M_2(\overline{K})$. Hence, for any element $\gamma\in\Gamma$,
$\overline{\rho}(\gamma)$ is a linear combination of $\id,A,B,AB$ whose
coefficients are \textsl{a priori}
in $\overline{K}$. The values of $f(\gamma)$,
$f(\gamma \alpha)$, $f(\gamma\beta)$ and $f(\gamma\alpha\beta)$ yield a
system of four equations that enable to retrieve these four coefficients,
as its determinant is the Gram determinant of Lemma~\ref{lem:gram}.
Hence, it follows from the Cramer formula that these coefficients are in
$K$. Hence $\overline{\rho}$ takes values in $\SL_2(K)$.

To conclude with the proof of Theorem~\ref{theo:lifting} in this case, we
need to check that the system $(S)$ above has solutions in $K$. This is
the content of next lemma where we have set $x=\tr A, y=\tr B$ and
$z=\tr AB$. Recall from trace formulas that in this notation
$\tr [A,B]=x^2+y^2+z^2-xyz-2\ne 2$.

\begin{lemma}\label{ilyaunesolution}
  Let $x,y,z$ be in $K$ such that $x^2+y^2+z^2-xyz\ne 4$. Then there exists
  a solution $(a,b,c,d)\in K^4$ to the system 
  \[ a+d=y,\quad b-c+dx=z,\quad ad-bc=1. \]
\end{lemma}
\begin{proof}
  Eliminating $d$ and $c$ from the first two equations we get
  $a^2+b^2-abx-ay+b(yx-z)+1=0$. We complete the square by setting
  $a'=a-bx/2-y/2$ to get 
  \[ a'^2+b^2(1-x^2/4)+b(xy/2-z)+1-y^2/4=0. \]
  If $x=\pm 2$, we can easy solve the equation in $b$ unless $z=\pm y$,
  but this is forbidden by our assumptions. Hence, we factorize $1-x^2/4$
  and complete the square in $b$ to get
  $a'^2+(1-x^2/4)b'^2=\frac{x^2+y^2+z^2-xyz-4}{4-x^2}$.
  We can conclude from the following nice exercise: in $K$, any equation of
  the form $ax^2+by^2=1$ for $a,b\in K\smallsetminus\{0\}$ has a solution
  (hint: any non-zero element of $K$ has the form $x^2$ or $tx^2$).
\end{proof}

Now suppose finally that $\overline{\rho}$ is reducible. This implies that
$\overline{\rho}$ has the same character than a diagonal representation in
$\SL_2(\overline{K})$, so we will suppose that $\overline{\rho}$ is diagonal.
So $\overline{\rho}$ factors through the abelianization of $\Gamma$.
If $f(\gamma)=\pm 2$ for all $\gamma\in\Gamma$ then we may as well take
$\overline{\rho}$ to be the corresponding representation in
$\lbrace\pm \id\rbrace$. Thus, let us assume there exists $\gamma_0$ such
that $f(\gamma_0)\neq \pm 2$. Again, we may consider a vector
$v\in\SL_2(\overline{K})$ which is not an eigenvector of
$\overline{\rho}(\gamma_0)$ and conjugate $\overline{\rho}$ into the basis
$(v,\overline{\rho}(\gamma_0)v)$, by some element $g\in\SL_2(\overline{K})$.
This yields
$g\overline{\rho}(\gamma_0)g^{-1}=
\left(\begin{array}{cc}0 & -1 \\ 1 & f(\gamma_0)\end{array}\right)$,
and, for all $\gamma\in\Gamma$,
$g\overline{\rho}(\gamma)g^{-1}=
\left(\begin{array}{cc}x & y \\ -y & x-yf(\gamma_0)\end{array}\right)$,
as $\overline{\rho}(\gamma)$ and $\overline{\rho}(\gamma_0)$ commute.
Now, the equations $f(\gamma)=\tr \overline{\rho}(\gamma)$ and 
$f(\gamma_0\gamma)=\tr(\overline{\rho}(\gamma_0)\overline{\rho}(\gamma))$
yield the system
\[ \left\lbrace\begin{array}{rcrcl}2x &-& f(\gamma_0)y &=& f(\gamma) \\
f(\gamma_0)x&+&(2-f(\gamma_0)^2)y &=& f(\gamma_0\gamma)\end{array}\right. \]
whose determinant equals $4-f(\gamma_0)^2$, which is nonzero by hypothesis.
Hence, again by the Cramer formula, $x$ and $y$ lie in $K$, in other
words, $g\overline{\rho} g^{-1}$ takes values in $\SL_2(K)$ once again.


\subsection{Smoothness}

Let us begin by recalling some basics of algebraic geometry.
The dimension of a (Zariski) open set $U\subset X(\Gamma)$ is the maximal
length of a chain of irreducible closed subsets
$Z_0\subsetneq Z_1\cdots \subsetneq Z_n \subset U$.
The dimension of $X(\Gamma)$ at the trivial character $\chi$ is by definition
\[ \dim_\chi X(\Gamma)=\inf \{\dim U, \chi\in U \subset X(\Gamma)\}. \]
It is known that $\dim_\chi X(\Gamma)\leqslant \dim T_\chi X(\Gamma)$, and
$X(\Gamma)$ is said to be {\em smooth} at $\chi$ if the equality holds.
The meaning of this smoothness condition is that there are no obstructions
to interpolate any Zariski-tangent vector by an actual deformation of the
character, as we recall now.

Let $m\subset A(\Gamma)^{\SL_2(\C)}$ be the maximal ideal corresponding to
$\chi$. The smoothness of $X(\Gamma)$ at $\chi$ is equivalent to the
regularity of the localisation of $A(\Gamma)^{\SL_2(\C)}$ at $m$ that we
denote by $R$, see~\cite[Chap.~4.2]{Liu}. This property implies that the
completion $\widehat{R}$ of $R$ with respect to the filtration by powers of
$m$ is an algebra of power series in $\dim m/m^2$ variables
(see~\cite[Proposition~2.27]{Liu}).

Suppose that $X(\Gamma)$ is smooth at $\chi$. Any tangent vector
$f_1\in T_\chi X(\Gamma)=\mathcal{P}(\Gamma)$ can be viewed as a map
$f_1:A(\Gamma)^{\SL_2(\C)}\to \C[t]/t^2$ mapping $u_\gamma$ to $tf_1$.
This map extends to the localisation $R$ and maps $m^2$ to $0$.
As $\widehat{R}$ is an algebra of power series, it is easy to extend the
map $f_1$ to a full series $f=\sum_{i\geqslant 1} t^i f_i$ as in the
following diagram:
\[ \xymatrix{R\ar[r]^{f_1}\ar@{^{(}->}[d]& \C[t]/t^2 \\
\widehat{R}\ar@{.>}[r]^f& \C[[t]].\ar[u]} \]
The existence of $f$ shows that there are no obstructions for any tangent
vector~$f_1$.
\begin{proposition}
  Let $\Gamma$ be a finitely generated group and set $n=\dim H_1(\Gamma,\C)$.
  If $n<2$ then $X(\Gamma)$ is smooth at the trivial character if $n>2$,
  it is not.
\end{proposition}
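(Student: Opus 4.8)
The plan is to work entirely through the comparison of $\dim_\chi X(\Gamma)$ with $\dim T_\chi X(\Gamma)=\dim\mathcal{P}(\Gamma)$, using the general inequality $\dim_\chi X(\Gamma)\leqslant\dim T_\chi X(\Gamma)$ recalled above, so that smoothness amounts to the reverse inequality. First I would record the tangent dimension in the relevant regimes. When $n=\dim H_1(\Gamma,\C)\leqslant 1$ one has $\Lambda^3 H_1(\Gamma,\C)=0$, so the sequence of Theorem~\ref{theo:parallel} gives $\mathcal{P}(\Gamma)=\mathcal{Q}(\Gamma)=\Sym^2 H_1(\Gamma,\C)^*$; thus $\dim T_\chi X(\Gamma)=0$ if $n=0$ and $\dim T_\chi X(\Gamma)=1$ if $n=1$.

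For the smooth direction, the case $n=0$ is immediate: the tangent space vanishes, so $0\leqslant\dim_\chi X(\Gamma)\leqslant\dim T_\chi X(\Gamma)=0$ forces equality. For $n=1$ I would exhibit a genuinely one-dimensional family of characters through $\chi$ to obtain $\dim_\chi X(\Gamma)\geqslant 1$. Concretely, choose a surjection $\psi\colon\Gamma\to\Z$ realizing the free part of $H_1(\Gamma,\Z)$, and for $\lambda\in\C^*$ set $\rho_\lambda(\gamma)=\mathrm{diag}(\lambda^{\psi(\gamma)},\lambda^{-\psi(\gamma)})$. The trace functions $t_\gamma(\lambda)=\lambda^{\psi(\gamma)}+\lambda^{-\psi(\gamma)}$ are Laurent polynomials, so $\lambda\mapsto\chi_{\rho_\lambda}$ is a morphism $\C^*\to X(\Gamma)$ passing through $\chi$ at $\lambda=1$; it is non-constant since $t_{\gamma_0}(\lambda)=\lambda+\lambda^{-1}$ for any $\gamma_0$ with $\psi(\gamma_0)=1$. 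Its image is therefore at least one-dimensional, giving $\dim_\chi X(\Gamma)\geqslant 1$, and together with the reverse inequality this yields $\dim_\chi X(\Gamma)=1=\dim T_\chi X(\Gamma)$, hence smoothness.

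For $n>2$ the strategy is to produce a tangent vector that cannot be integrated, and to invoke the criterion recalled just above the statement: if $\chi$ were smooth, then $\widehat{R}$ would be a power series ring and \emph{every} tangent vector $f_1$ would lift to a formal deformation $f=\sum_{i\geqslant 1}t^i f_i$ with values in $\C[[t]]$. Since $n\geqslant 3$, I would choose a symmetric bilinear form of rank $3$ on $H_1(\Gamma,\C)$ and pull it back along abelianization to a quadratic form $f_1\in\mathcal{Q}(\Gamma)\subset\mathcal{P}(\Gamma)$; the associated form $\langle a,b\rangle=f_1\circ\varepsilon_2(a,b)$ on $H_1(\Gamma,\C)$ then also has rank $3$. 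By Theorem~\ref{theo:obstruction}(2), no algebra morphism $tf_1+t^2f_2+t^3f_3\colon A(\Gamma)^{\SL_2(\C)}\to\C[t]/(t^4)$ exists, so \emph{a fortiori} $f_1$ admits no lift to $\C[[t]]$. By the smoothness criterion this obstruction shows $\chi$ is not a smooth point.

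The hard part is really the lower bound in the smooth case $n=1$: the inequality $\dim_\chi X(\Gamma)\leqslant\dim T_\chi X(\Gamma)$ is automatic, so everything hinges on genuinely constructing a positive-dimensional family of characters, which the diagonal representations above supply. As a sanity check one verifies that their tangent direction is the generator $\gamma\mapsto\psi(\gamma)^2$ of $\mathcal{Q}(\Gamma)$, so the curve fills the whole tangent line as expected. By contrast the non-smooth direction is essentially formal once Theorem~\ref{theo:obstruction} and the power-series lifting criterion are in hand.
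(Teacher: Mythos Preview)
Your proof is correct and follows essentially the same approach as the paper: compute $\dim\mathcal{P}(\Gamma)$ via the exact sequence, produce a one-dimensional family through $\chi$ when $n=1$ using a surjection $\Gamma\to\Z$, and for $n>2$ exhibit a quadratic form of rank $\geqslant 3$ as an obstructed tangent vector via Theorem~\ref{theo:obstruction}. The only cosmetic difference is that the paper phrases the $n=1$ curve as the injection $X(\Z)\simeq\C\hookrightarrow X(\Gamma)$, whereas you write down the diagonal representations explicitly; these are the same thing.
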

\begin{proof}
  If $n=0$ then we computed that $\mathcal{P}(\Gamma)=0$. This proves that
  the trivial character is an isolated (and smooth) point of $X(\Gamma)$.
  If $n=1$ then $\dim \mathcal{P}(\Gamma)=1$. Moreover, there is a surjection
  $\Gamma\to \Z$ which induces an injection $X(\Z)\to X(\Gamma)$.
  The variety $X(\Z)$ is isomorphic to $\C$ (map $[\rho]$ to $\tr \rho(1)$)
  hence is 1-dimensional and contains the trivial character. It follows that
  $\dim_\chi X(\Gamma)\geqslant 1$ and again $X(\Gamma)$ is smooth at~$\chi$.
  
  Suppose now that $n\geqslant 3$. Any non-degenerate quadratic form
  $q\in \mathcal{Q}(\Gamma)$ is a tangent vector at $\chi$. If $X(\Gamma)$
  is smooth at $\chi$, it cannot be obstructed, but it follows from
  Theorem~\ref{theo:obstruction} that $q$ must have rank $\leqslant 2$
  and we get a contradiction.
\end{proof}

The remaining case where $\dim H_1(\Gamma,\C)=2$ appears to be more subtle,
and we will prove that the trivial character is smooth provided that
$H_2(\Gamma,\C)=0$. Before doing so, let us observe that
Theorem~\ref{theo:etale} already proves this statement under slightly
stronger hypothesis.
\begin{lemma}
  Suppose $H_1(\Gamma,\Z)\simeq\Z^2$ and $H_2(\Gamma,\Z)=0$.
  Then $X(\Gamma)$ is smooth at the trivial character.
\end{lemma}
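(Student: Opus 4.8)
The plan is to reduce the statement to the free group $F_2$ by means of Theorem~\ref{theo:etale}. First I would choose two elements $a,b\in\Gamma$ whose classes form a basis of $H_1(\Gamma,\Z)\simeq\Z^2$, and let $\phi\colon F_2\to\Gamma$ be the homomorphism sending the two free generators of $F_2$ to $a$ and $b$. By construction $\phi$ carries a basis to a basis, so it induces an isomorphism $H_1(F_2,\Z)\to H_1(\Gamma,\Z)$. Moreover $H_2(F_2,\Z)=0$, since free groups have vanishing homology in degree~$2$, and $H_2(\Gamma,\Z)=0$ by hypothesis; hence the induced map $H_2(F_2,\Z)\to H_2(\Gamma,\Z)$ is trivially an epimorphism. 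Here it is worth noting that the two hypotheses are used exactly once: the absence of torsion in $H_1(\Gamma,\Z)\simeq\Z^2$ is what allows a genuine isomorphism on $H_1$, and the vanishing $H_2(\Gamma,\Z)=0$ is precisely what makes the $H_2$-epimorphism condition automatic.

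With these two facts in hand, Theorem~\ref{theo:etale} applies to $\phi$ and shows that $\phi^*\colon X(\Gamma)\to X(F_2)$ is \'etale at the trivial character. As recalled after the statement of that theorem, this means precisely that $\phi^*$ induces an isomorphism between the spaces of Zariski-jets at the two trivial characters, equivalently an isomorphism of the completed local rings $\widehat{R}_\Gamma$ and $\widehat{R}_{F_2}$. Since the discussion preceding this lemma shows that smoothness at $\chi$ is equivalent to the completion $\widehat{R}$ being a ring of power series, $X(\Gamma)$ is smooth at the trivial character if and only if $X(F_2)$ is.

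It therefore remains to observe that $X(F_2)$ is smooth at the trivial character. For this I would invoke the classical description due to Fricke: by Theorem~\ref{theo:procesi} the invariant ring $A(F_2)^{\SL_2(\C)}$ is generated by $t_a$, $t_b$ and $t_{ab}$, and these three functions are algebraically independent, so $A(F_2)^{\SL_2(\C)}=\C[t_a,t_b,t_{ab}]$ and $X(F_2)\cong\C^3$ is smooth everywhere, in particular at the trivial character. Alternatively one can verify the equality of dimensions directly: when $\dim H_1(\Gamma,\C)=2$ one has $\Lambda^3 H_1(\Gamma,\C)=0$, so the exact sequence~\eqref{suiteexacteincomplete} gives $\mathcal{P}(F_2)=\mathcal{Q}(F_2)$, of dimension $3$, matching $\dim_\chi X(F_2)=3$.

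The reduction itself is purely formal once Theorem~\ref{theo:etale} is granted, so the only genuine input beyond it is the smoothness of $X(F_2)$; this is where I expect the (admittedly classical) content to sit, and it is also the step most worth stating carefully, since $\dim H_1(F_2,\C)=2$ is exactly the borderline case where neither of the earlier obstructions of Theorem~\ref{theo:obstruction} can rule out smoothness. Granting the Fricke computation, combining it with the \'etale equivalence of completed local rings completes the proof.
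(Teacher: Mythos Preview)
Your proof is correct and follows essentially the same route as the paper: construct $\phi\colon F_2\to\Gamma$ inducing an isomorphism on $H_1$, observe that the $H_2$ hypothesis of Theorem~\ref{theo:etale} is vacuously satisfied, and transfer smoothness from the classical identification $X(F_2)\simeq\C^3$. Your version is somewhat more explicit in verifying the hypotheses and in passing through completed local rings, but the argument is the same.
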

\begin{proof}
  We may choose a morphism $\phi\colon F_2\to\Gamma$ that induces an
  isomorphism of the abelianizations. Then $\phi$ satisfies the hypothesis
  of Theorem~\ref{theo:etale}. Now, it is classical that $X(F_2)\simeq\C^3$
  is smooth at the trivial character; it follows that all Zariski-tangent
  vectors to the trivial character in $X(\Gamma,\SL_2(\C))$ are unobstructed,
  and hence, this is a smooth point.
\end{proof}
For example, if $H_1(\Gamma,\Z)\simeq\Z^2$ and $\Gamma$ admits a finite
presentation with two more generators than relations (such a presentation
is said to be of deficiency two), then we may check that $H_2(\Gamma,\Z)=0$,
following the comments after Lemma~\ref{lem:id} above. 
This also follows from the Epstein inequality, which
states
that the minimal number of generators of $H_2(\Gamma,\Z)$ is less than the
rank of $H_1(\Gamma,\C)$ minus the deficiency of $\Gamma$, see~\cite{Epstein}.
This gives many examples of groups with smooth trival character, as an
application of Theorem~\ref{theo:etale}.

Now we will extend this result to homology with complex coefficients. To this
end let us start with the observation that smoothness can be read in the
representation variety.

\begin{lemma}\label{lem:Rsuffit}
  Let $\Gamma$ be a group such that $\dim H_1(\Gamma,\C)=2$.
  Then, $X(\Gamma)$ is smooth at the trivial character if and only if
  $R(\Gamma)$ is smooth at the trivial representation.
\end{lemma}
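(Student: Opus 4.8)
The plan is to compare the two smoothness conditions through their tangent spaces and local dimensions, exploiting that the trivial representation $\rho_0$ is a \emph{fixed point} of the $\SL_2(\C)$-action on $R(\Gamma)$. Recall that the Zariski tangent space is $T_{\rho_0}R(\Gamma)=Z^1(\Gamma,\mathfrak{sl}_2(\C))=H^1(\Gamma,\C)\otimes\mathfrak{sl}_2(\C)$ (at the trivial representation the coefficients are untwisted and the coboundaries vanish), of dimension $3\dim H^1(\Gamma,\C)=6$; while $T_\chi X(\Gamma)=\mathcal{P}(\Gamma)=\mathcal{Q}(\Gamma)$ has dimension $3$, since $\Lambda^3 H_1(\Gamma,\C)=0$ when $\dim H_1(\Gamma,\C)=2$ and $\mathcal{Q}(\Gamma)=\Sym^2 H_1(\Gamma,\C)^\ast$. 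Smoothness at either point means that the local dimension equals the tangent dimension, so the goal is to prove $\dim_{\rho_0}R(\Gamma)=6\Leftrightarrow\dim_\chi X(\Gamma)=3$.

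For the implication $R$ smooth $\Rightarrow X$ smooth I would invoke Luna's \'etale slice theorem. The orbit of $\rho_0$ is a single closed point with stabiliser all of $G=\SL_2(\C)$, so if $R(\Gamma)$ is smooth at $\rho_0$ the slice representation is the full tangent space $W=T_{\rho_0}R(\Gamma)=\mathfrak{sl}_2(\C)\oplus\mathfrak{sl}_2(\C)$, and the slice theorem identifies $X(\Gamma)$, \'etale-locally at $\chi$, with the categorical quotient $W/\!/G$. The decisive input is classical invariant theory: the invariants of $\SL_2(\C)$ acting by the adjoint representation on two copies of $\mathfrak{sl}_2(\C)$ are \emph{freely} generated by the three pairings $\tr(a_1^2)$, $\tr(a_1a_2)$, $\tr(a_2^2)$, so $W/\!/G\cong\C^3$ is smooth; hence $X(\Gamma)$ is smooth at $\chi$. (Reassuringly, these three generators match the three-dimensional space $\mathcal{Q}(\Gamma)$, and it is exactly the occurrence of \emph{two} copies of $\mathfrak{sl}_2(\C)$ that makes the quotient smooth, which is why the equivalence is special to $\dim H_1(\Gamma,\C)=2$.)

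For the converse $X$ smooth $\Rightarrow R$ smooth I would argue by dimension. Assume $\dim_\chi X(\Gamma)=3$. The reducible characters form a closed subset $X^{\mathrm{red}}(\Gamma)$, consisting of characters of diagonal representations, hence a quotient of $\Hom(\Gamma,\C^\ast)$ by the Weyl involution; its dimension is $\dim\Hom(\Gamma,\C^\ast)=\dim H^1(\Gamma,\C)=2$. As $2<3$, the irreducible characters must be dense in $X(\Gamma)$ near $\chi$. Over the irreducible locus the quotient map $\pi\colon R(\Gamma)\to X(\Gamma)$ is a geometric quotient with three-dimensional orbits (irreducible representations have stabiliser $\{\pm\id\}$), so the irreducible locus of $R(\Gamma)$ has dimension $3+3=6$ near the fibre over $\chi$. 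It then remains to check that $\rho_0$ lies in the closure of this locus, which would produce a six-dimensional component of $R(\Gamma)$ through $\rho_0$ and hence $\dim_{\rho_0}R(\Gamma)\geqslant 6=\dim T_{\rho_0}R(\Gamma)$, forcing regularity (and in particular reducedness) at $\rho_0$.

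The hard part will be precisely this last point: showing that the trivial representation is a limit of irreducible representations whose characters converge to $\chi$. I would handle it by lifting a curve of irreducible characters $\chi_s\to\chi$ to irreducible representations $\rho_s$ and degenerating them, by conjugation, toward the unique closed orbit $\rho_0$ inside the fibre $\pi^{-1}(\chi)$ of parabolic representations; the substantive content is that this degeneration stays inside $R(\Gamma)$ and actually reaches $\rho_0$ rather than some other point of the fibre. Once this is established, everything else is bookkeeping with the two tangent dimensions recalled above and the invariant computation $W/\!/\SL_2(\C)\cong\C^3$.
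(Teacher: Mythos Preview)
Your argument is correct, but the forward direction takes a different route from the paper's. The paper treats both implications symmetrically through a single dimension formula: at any irreducible $\rho$ one has $\dim_{[\rho]}X(\Gamma)=\dim_\rho R(\Gamma)-3$ because the quotient is geometric there, and the reducible loci are small ($\dim X^{\mathrm{red}}=2$, $\dim R^{\mathrm{red}}\leqslant 5$). Thus if either side has the expected local dimension ($6$ in $R$, $3$ in $X$), irreducible points are dense nearby, and upper semicontinuity of local dimension pushes the equality across $\pi$ in either direction. Your use of Luna's slice theorem for $R$ smooth $\Rightarrow X$ smooth is a legitimate alternative: it is heavier machinery but buys you the sharper statement that $X(\Gamma)$ is \'etale-locally $\mathfrak{sl}_2^{\,2}/\!/\SL_2\cong\C^3$ at $\chi$, and it sidesteps the bound on $\dim R^{\mathrm{red}}$ that the paper asserts without proof. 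For the converse your outline agrees with the paper's; the step you single out as ``hard'' is exactly what the paper compresses into ``the converse holds for the same reason'', and the GIT fact you invoke---that every orbit in the fibre $\pi^{-1}(\chi)$ contains the unique closed orbit $\{\rho_0\}$ in its closure---is the clean way to finish it: once some point of $\pi^{-1}(\chi)$ lies in the $G$-invariant closed set $\overline{R^{\mathrm{irr}}}$ (guaranteed since $\pi$ carries closed $G$-invariant sets to closed sets and $\chi\in\overline{X^{\mathrm{irr}}}$), so does $\rho_0$.
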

\begin{proof}
  As $\dim H_1(\Gamma,\C)=2$, we have
  $\Lambda^3 H_1(\Gamma,\C)=0$, hence $\mathcal{E}(\Gamma)=0$.
  It follows that  $\mathcal{P}(\Gamma)=Q(\Gamma)$ has dimension~3.
  Also, $H^1(\Gamma,\operatorname{sl}_2(\C))$ \textsl{i.e.}, the Zariski-tangent
  space of $R(\Gamma)$ at the trivial representation has dimension~6.
  Therefore, the statement of the lemma would follow if
  the following inequality holds if we specialise the representation $\rho$
  to the trivial representation.
  \begin{equation}\label{dimfibre}
    \dim_{[\rho]} X(\Gamma) = \dim_\rho R(\Gamma)-3.
  \end{equation}

  This equality does not hold in general, but it does hold for an
  irreducible representation $\rho$ as, restricted to this open set, the
  quotient map is a ``geometric quotient'', \textsl{i.e.},
  each fiber consists in a
  single orbit of maximal dimension. In that case, the quotient map is flat
  because it is a locally trivial $\mathrm{PSL}_2(\C)$-principal bundle
  (see~\cite[Proposition 0.9]{GIT}) and the equality~\eqref{dimfibre}
  follows from general properties of flat morphisms
  (see \textsl{e.g.}~\cite[Theorem 3.12]{Liu}).
  Observe that one can prove it directly
  by constructing local cross-sections of the quotient map in the spirit of
  Section~\ref{sec:lifting}.

  Observe also that the space of reducible characters of $\Gamma$ is
  isomorphic to $X(\Z^2)$ hence has dimension~2. Its preimage in $R(\Gamma)$
  has dimension at most~5. Suppose that $R(\Gamma)$ is smooth at the trivial
  representation. It follows that every neighbourhood of the trivial
  representation contains an irreducible representation, at which $R(\Gamma)$
  still has dimension~6, and for which the equality~\eqref{dimfibre} holds.
  As the dimension is upper semi-continuous,
  the local dimension of $X(\Gamma)$ at the trivial
  character is at least~3. The converse holds for the same reason.
\end{proof}

We deduce some concrete criteria for the smoothness of the trivial
character in this case.

\begin{proposition}\label{criteres}
  Let $\Gamma$ be a group such that $\dim H_1(\Gamma,\C)=2$. If one of the
  following conditions holds, then $X(\Gamma)$ is smooth at the trivial
  character.
  \begin{enumerate}
  \item $H^2(\Gamma,\C)=0$.
  \item $\Gamma$ admits a finite presentation with deficiency $2$.
  \item There exists a surjection $\Gamma\to F_2$.
  \end{enumerate}
\end{proposition}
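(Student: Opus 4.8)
The plan is to reduce all three cases to the smoothness of $R(\Gamma)$ at the trivial representation and then invoke Lemma~\ref{lem:Rsuffit}. The key point is that the adjoint action of the trivial representation is trivial, so $\operatorname{sl}_2(\C)$ is simply $\C^3$ with trivial $\Gamma$-action. Consequently the Zariski-tangent space $Z^1(\Gamma,\operatorname{sl}_2(\C))=\Hom(\Gamma,\operatorname{sl}_2(\C))=\Hom(H_1(\Gamma,\Z),\operatorname{sl}_2(\C))$ has dimension $2\cdot 3=6$ (using $\dim H_1(\Gamma,\C)=2$), in accordance with Lemma~\ref{lem:Rsuffit}, while all obstructions to deforming the trivial representation lie in $H^2(\Gamma,\operatorname{sl}_2(\C))=H^2(\Gamma,\C)^{\oplus 3}$ (cf.~the remark in the introduction and \cite{Weil}).

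For condition (1), the hypothesis $H^2(\Gamma,\C)=0$ gives $H^2(\Gamma,\operatorname{sl}_2(\C))=0$, so the trivial representation is unobstructed and $R(\Gamma)$ is smooth there; Lemma~\ref{lem:Rsuffit} then yields the claim. For condition (2), I would show it implies (1). Realising the deficiency-$2$ presentation as a $2$-complex $K$ with one $0$-cell, $g$ one-cells and $g-2$ two-cells, one computes $\chi(K)=1-g+(g-2)=-1$; since $\dim H_1(K,\C)=\dim H_1(\Gamma,\C)=2$, this forces $\dim H_2(K,\C)=0$. As $H_2(\Gamma,\C)$ is a quotient of $H_2(K,\C)$, we get $H_2(\Gamma,\C)=0$, hence $H^2(\Gamma,\C)=0$ by universal coefficients over a field, and we are back to case (1). (Alternatively this is immediate from the Epstein inequality recalled above, bounding the minimal number of generators of $H_2(\Gamma,\Z)$ by $\dim H_1(\Gamma,\C)-\operatorname{def}(\Gamma)=0$, see~\cite{Epstein}.)

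Condition (3) is the substantial one, since a surjection onto $F_2$ does not control $H^2(\Gamma,\C)$, so the obstruction-vanishing argument is unavailable and I would instead proceed geometrically. A surjection $\psi\colon\Gamma\to F_2$ induces an injective morphism $\psi^*\colon R(F_2)\to R(\Gamma)$, $\rho\mapsto\rho\circ\psi$, whose image is the closed set of representations of $\Gamma$ trivial on $\ker\psi$; injectivity holds because $\psi$ is onto. Now $R(F_2)\cong\SL_2(\C)\times\SL_2(\C)$ is smooth and irreducible of dimension $6$, and $\psi^*$ sends the trivial representation of $F_2$ to that of $\Gamma$; being an injective morphism in characteristic $0$ it preserves dimension, so $R(\Gamma)$ contains an irreducible $6$-dimensional subvariety through the trivial representation, whence $\dim_{\mathrm{triv}}R(\Gamma)\geqslant 6$. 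Combined with the tangent-space computation $\dim T_{\mathrm{triv}}R(\Gamma)=6$ above and the general inequality $\dim_{\mathrm{triv}}R(\Gamma)\leqslant\dim T_{\mathrm{triv}}R(\Gamma)$, this gives equality, so $R(\Gamma)$ is smooth at the trivial representation, and Lemma~\ref{lem:Rsuffit} concludes.

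The main obstacle is exactly case (3): whereas (1) and (2) rest on the vanishing of the obstruction group, in (3) one must exhibit a full $6$-dimensional family of genuine deformations, and the delicate bookkeeping is verifying that $\psi^*$ realises $R(F_2)$ as a subvariety of the expected dimension through the trivial representation and matching this lower bound with the tangent-space dimension. Everything else is then a direct application of Lemma~\ref{lem:Rsuffit} together with the smoothness of $\SL_2(\C)\times\SL_2(\C)$.
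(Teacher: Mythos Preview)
Your proof is correct and follows essentially the same route as the paper: reduce to smoothness of $R(\Gamma)$ via Lemma~\ref{lem:Rsuffit}, handle (1) by vanishing of the obstruction space $H^2(\Gamma,\operatorname{sl}_2(\C))$, reduce (2) to (1) by showing $H_2(\Gamma,\C)=0$, and for (3) use that $\psi^*$ embeds the $6$-dimensional $R(F_2)\cong\SL_2(\C)^2$ into $R(\Gamma)$ through the trivial representation. The only differences are of emphasis: the paper spells out the Nijenhuis--Richardson recursion in (1) and offers an alternative implicit-function-theorem argument for (2), whereas you give the Euler-characteristic computation; and contrary to your closing remark, case (3) is in fact the shortest, since the inclusion $R(F_2)\hookrightarrow R(\Gamma)$ immediately provides the required $6$-dimensional family.
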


Condition~(2) holds, for example, for the fundamental group of a
non-orientable surface of genus~3.
Condition~(3) holds for the fundamental group
$\Gamma=\pi_1(S^3\smallsetminus L)$ of a homology boundary link
$L\subset S^3$ with two components.

\begin{proof}
  By Lemma~\ref{lem:Rsuffit} above, it suffices to prove that the trivial
  representation is a smooth point of $R(\Gamma)$. The first case is a
  standard result of deformation theory for which we refer
  to~\cite{NijenhuisRichardson}. Let us give a rough idea:
  following~\cite{Weil}, a tangent vector to the trivial representation is
  a cocycle, in our case, a morphism $z_1\colon\Gamma\to \petitsl_2(\C)$.
  The space $R(\Gamma)$ is smooth at $1$ provided that any $z_1$ gives rise
  to a morphism
  $\rho=\exp(\sum_{n\geqslant 1} t^n z_n)\colon\Gamma\to \SL_2(\C[[t]])$.
  One can prove its existence recursively by constructing
  $z_n\colon\Gamma\to\petitsl_2(\C)$ from the data of $z_k$ for $k<n$.
  Indeed, the equation $\rho(\gamma\delta)=\rho(\gamma)\rho(\delta)$ at
  the order $n$ can be written
  \[ z_n(\gamma\delta)-z_n(\gamma)-z_n(\delta)=F_n(\gamma,\delta) \]
  where $F_n$ is a linear combination of iterated brackets of $z_k(\gamma)$
  and $z_k(\delta)$ for $k<n$. It may be checked that $F_n$ is a $2$-cocycle
  (see~\cite{NijenhuisRichardson}), hence this equation has a solution as it
  can be written $dz_n=F_n$. This proves our assumption.
  
  In the second case, the remark following Lemma \ref{lem:id} 
  shows that $H_2(\Gamma,\Z)=0$ hence
  $H_2(\Gamma,\C)=0$ and we are done. Let us mention however that in this case
  the smoothness of $R(\Gamma)$ at $1$ is much easier to prove directly from
  the implicit function theorem: a presentation
  $\Gamma=\langle a_1,\ldots,a_n\mid r_1,\ldots,r_{n-2}\rangle$
  gives an embedding of $R(\Gamma)$ into $\SL_2(\C)^n$ which
  is smooth at the trivial representation. The reason is that
  $(r_1,\ldots,r_{n-2})$ is a submersion at
  $(1,\ldots,1)$ as $r_1,\ldots,r_{n-2}$ are linearly independent 
  in the abelian group generated by $a_1,\ldots,a_n$.
  
  In the third case, the surjection gives an inclusion
  $R(F_2)\subset R(\Gamma)$. As $R(F_2)=\SL_2(\C)^2$ has dimension~6, the
  conclusion follows.
\end{proof}

Although Proposition~\ref{criteres} covers many cases, it is not a closed
statement. Let us observe that its second condition gives a concrete
strategy as one can always extract from a presentation of a group $\Gamma$
a presentation with deficiency two of a group $\Gamma'$ that surjects on
$\Gamma$; then $X(\Gamma)\subset X(\Gamma')$ is smooth at the trivial
character if and only if the extra relations in $\Gamma$ are superfluous in
a neighbourhood of the trivial character.

A simple example is the group
$\Gamma=\langle a,b,c,d\mid c^3,d^3,(cd)^3\rangle$, the free product of
$F_2$ with the triangular group $(3,3,3)$. Indeed, close to the identity
in $\SL_2(\C)$, the equation $w^3=1$ as the unique solution $w=1$.
Hence the last relation is (locally) superfluous. Let us conclude with the
following more sophisticated example where the extra relations are globally
superfluous.

\begin{remark}
  Set $\Gamma'=\langle a,b,c,d\mid c^4[a,b]^2, d^3[a,b]^3\rangle$ and set
  \[ \Gamma=\langle a,b,c,d\mid c^4[a,b]^2, d^3[a,b]^3,
  [[c,[a,b]],[d,[a,b]]]\rangle. \]
  Let $\varphi\colon\Gamma'\to\Gamma$ be the most obvious morphism,
  mapping $a,b,c,d$ to $a,b,c,d$ respectively.
  Then, the associated map $\varphi_*\colon X(\Gamma)\to X(\Gamma')$
  is an isomorphism. However, $\varphi$ is not an isomorphism.
\end{remark}

\begin{proof}
  This amounts to saying that any representation of $\Gamma'$ in $\SL_2(\C)$
  factors through $\Gamma$. The key property is that, whenever
  $A,B\in\SL_2(\C)$ are two roots of a common non-central element
  (\textsl{i.e.}, if $\exists C\neq \pm1$, and $n,m$ such that $A^n=B^m=C$),
  then $A$ and $B$ commute. Let $A,B,C,D\in\SL_2(\C)$ be the respective
  images of $a,b,c,d$ by any morphism $\Gamma'\to\SL_2(\C)$. If $[A,B]=\pm1$
  then $[[C,[A,B]],[D,[A,B]]]=1$, obviously.
  If $[A,B]\neq \pm1$, then $[A,B]^2$ and $[A,B]^3$ cannot be both equal
  to $\pm 1$. If, say, $[A,B]^2\neq \pm 1$, then
  $C^{-1}$ and $[A,B]$ are both roots of this nontrivial element, hence they
  commute, and we have again $[[C,[A,B]],[D,[A,B]]]=1$.
  
  We still have to check that $\varphi$ is not an isomorphism. For this, it
  suffices to construct a morphism $\Gamma'\to \mathfrak{S}_6$
  which does not kill
  $[[c,[a,b]],[d,[a,b]]]$. One such example $\psi$ is defined as follows:
  $\psi(a)=(1~2~3)$, $\psi(b)=(1~4)(2~5)(3~6)$, $\psi(c)=(1~6~3~5~2~4)$
  and $\psi(d)=(2~3~4)$.
\end{proof}

%
%

\bibliographystyle{plain}
\bibliography{chacarera}

\end{document}